\newtheorem{theorem}{Theorem}[section]
\newtheorem{cor}[equation]{Corollary}
\newtheorem{lemma}[equation]{Lemma}
\newtheorem{proposition}[equation]{Proposition}
\newtheorem{construction}[equation]{Construction}
\newtheorem{remark}[equation]{Remark}
\newtheorem{definition}[equation]{Definition}
\newtheorem{example}[equation]{Example}
\newtheorem{thm}{Theorem}
\newtheorem*{thm*}{Theorem}
\newtheorem*{cor*}{Corollary}
\newtheorem*{lem*}{Lemma}
\newtheorem*{prop*}{Proposition}
\theoremstyle{definition}
\newtheorem*{defn*}{Definition}
\newtheorem*{ex*}{Example}
\newtheorem*{exs*}{Examples}
\newtheorem*{rmk*}{Remark}
\newtheorem*{claim*}{Claim}
\numberwithin{equation}{section}
\numberwithin{figure}{section}
\newcommand{\mC}{{\mathbb C}}
\newcommand{\mF}{{\mathbb F}}
\newcommand{\mR}{{\mathbb R}}
\newcommand{\mZ}{{\mathbb Z}}
\newcommand{\cN}{{\mathcal N}}
\newcommand{\umF}{\underline{\mF}}
\newcommand{\umZ}{\underline{\mZ}}
\newcommand{\vertsimeq}{\rotatebox{90}{$\,\simeq$}}
\definecolor{green}{HTML}{009E73}
\definecolor{blue}{HTML}{0072B2}
\definecolor{yellow}{HTML}{E69F00} 
\definecolor{rose}{HTML}{DE087F}
\DeclarePairedDelimiter\ceil{\lceil}{\rceil}
\DeclarePairedDelimiter\floor{\lfloor}{\rfloor}
\title{The $H \underline{\mathbb{F}}_2$-homology of $C_2$-equivariant Eilenberg-MacLane spaces}
\author{Sarah Petersen}
\address{
	Dept. of Mathematics \\
	University of Notre Dame \\
	Notre Dame, IN, U.S.A.
}
\begin{document}
	
	\begin{abstract}
		We extend Ravenel-Wilson Hopf ring techniques to $C_2$-equivariant homotopy theory. Our main application and motivation is a computation of the $RO(C_2)$-graded homology of $C_2$-equivariant Eilenberg-MacLane spaces. The result we obtain for $C_2$-equivariant Eilenberg-MacLane spaces associated to the constant Mackey functor $\umF_2$ gives a $C_2$-equivariant analogue of the classical computation due to Serre. We also investigate a twisted bar spectral sequence computing the homology of these equivariant Eilenberg-MacLane spaces and suggest the existence of another twisted bar spectral sequence with $E^2$-page given in terms of a twisted Tor functor.
	\end{abstract}

\maketitle
\tableofcontents

\section{Introduction}\label{sec:intro}

Computations of invariants in equivariant homotopy theory have powerful applications contributing to solutions of outstanding classification problems in geometry, topology, and algebra. A primary example is Hill, Hopkins, and Ravenel's solution of the Kervaire invariant one problem \cite{HillHopkinsRavenel2016}, which used computations in equivariant homotopy theory to answer the question of when a framed $4k + 2$ dimensional manifold can be surgically converted into a sphere. Despite the success of numerous applications, many equivariant computations remain difficult to access due to their rich structure. This is especially true for (unstable) equivariant spaces, for which many computations have not yet been completed, despite their analogous nonequivariant results being well known. 

This paper extends Ravenel-Wilson Hopf ring techniques \cite{RavenelWilson1977, RavanelWilson1980, Wilson1982} to $C_2$-equivariant homotopy theory. Our main application and motivation is a computation of the $RO(C_2)$-graded homology of $C_2$-equivariant Eilenberg-MacLane spaces. The result, stated over the course of Theorems \ref{Thm:signed}, \ref{Thm:HKVgen}, and \ref{Thm:sigmaPlusl}, is a $C_2$-equivariant analogue of the classical cohomology computation completed by Serre in the 1950s \cite{serre1953}. 

Non-equivariantly, Serre applied the Borel theorem \cite[page 88, Theorem 1]{moshertangora1968} to the path space fibration 
$$K (\mF_p, n) \simeq \Omega K(\mF_p, n + 1) \to  P \big( K(\mF_p, n + 1) \big) \to K(\mF_p, n + 1),$$ to calculate the cohomology of $K(\mF_p, n + 1)$ given $H^* K(\mF_p, n).$ 
In $C_2$-equivariant homotopy theory, the constant Mackey functor $\umF_2$ is the analogue of the group $\mF_2$ and the Eilenberg-MacLane spaces $K_V = K( \umF_2, V)$ are graded on the real representations $V$ of the group $C_2$ rather than on the integers. Since the group $C_2$ has two irreducible real representations, the trivial representation and the sign representation $\sigma,$ the analogous equivariant computation would require computing the cohomology of $K_{V + \sigma}$ from $H^\star K_{V}$ in addition to $H^\star K_{V + 1}$ from $H^\star K_V.$ This would necessitate having a so called signed or twisted version of the Borel theorem. However, no such theorem is known to exist, making it difficult to study the cohomology of the spaces $K_{V + \sigma}$ with these techniques. We call $K_{V + \sigma}$ a signed delooping of $K_V$ since the space of signed loops $\Omega^\sigma K_{V + \sigma} \simeq K_V.$ 

While direct extension of Serre's original argument does not allow for the computation of the cohomology of signed deloopings, it has been successfully applied to study trivial representation deloopings of $K_\sigma,$ whose cohomology is known \cite{HuKriz2001}. This approach is described in Ugur Yigit's thesis \cite{yigit2019}, where it is noted that the $RO(C_2)$-graded cohomology of all $C_2$-equivariant Eilenberg-MacLane spaces $K_{\sigma + *}$ can be computed using this method. Throughout, we use $*$ to denote integer grading and reserve $\star$ to denote grading by finite dimensional real representations. 

A major reason to study Ravenel-Wilson Hopf ring techniques in $C_2$-equivariant homotopy theory is that they provide a way to study $\sigma$-deloopings. These techniques, which investigate multiplicative structures coming from $H$-space maps on spaces having a graded multiplication, lend additional structure that can be exploited to complete computations. 

An important tool in classical applications of Ravenel-Wilson Hopf ring techniques is the bar construction $B.$ This construction plays a significant role in computation because $B$ is a trivial representation delooping functor with $B K_V \simeq K_{V + 1}$. In the $C_2$-equivariant world, there is a twisted bar construction $B^\sigma,$ which is a sign representation delooping functor with $B^\sigma K_V \simeq K_{V + \sigma}$ \cite{Liu2020}. We use these two constructions to explicitly model multiplicative structures on the spaces $K_V$ at the point set level (Theorem \ref{thm:5.1}), directly extending work by Ravenel and Wilson \cite{RavanelWilson1980}. We also describe our approach to using this structure to investigate signed and trivial representation deloopings in Section \ref{sec:multStrs}.

Whereas Ravenel and Wilson use a collapsing integer-graded bar spectral sequence to inductively compute the homology of classical nonequivariant Eilenberg-MacLane spaces \cite{Wilson1982}, we deduce many of our equivariant computations from non-equivariant ones using a computational method introduced by Behrens-Wilson (\cite{BehrensWilson2018} Lemma 2.8). Starting with the $RO(C_2)$-graded homology of $K_\sigma$, we use the graded multiplication on the spaces $K_V$ coming from the genuine equivariant ring structure on $H \umF_2,$ to produce elements of the $RO(C_2)$-graded homology of $K_{* \sigma}.$ We then use the point set level understanding of multiplicative structures on the spaces $K_{*\sigma}$ developed in Theorem \ref{thm:5.1} to verify that these elements in fact form a free basis for the homology.

Once we have computed $H_\star K_{* \sigma}$ (Theorem \ref{Thm:signed}), we use Hopf ring structures in $RO(C_2)$-graded bar spectral sequences to inductively compute $H_\star K_{i \sigma + j}$ from $H_\star K_{i \sigma}$ (Theorem \ref{Thm:HKVgen}). In the case where $i = 1$, that is for the spaces $K_{\sigma +*},$ we name all homology generators in terms of the Hopf ring structure (Theorem \ref{Thm:sigmaPlusl}). The task of naming homology generators for the spaces $K_V,$ where $\sigma +1 \subset V,$increases in complexity as the number of sign representations increases. We illustrative this phenomenon in Section \ref{sec:btwbssComps}.

Knowing the $RO(C_2)$-graded homology of the $C_2$-equivariant Eilenberg-MacLane spaces $K_V,$ we turn to investigating the $RO(C_2)$-graded twisted bar spectral sequence. Much like the classical integer graded bar spectral sequence, the $RO(C_2)$-graded twisted bar spectral sequence arises from a filtered complex. However, computations with this twisted spectral sequence are more complicated than in the classical case. For example, in contrast to the classical case where the integer-graded bar spectral sequence computing the non-equivariant mod $p$ homology of the classical Eilenberg-MacLane spaces $K_* = K(\mF_p, *)$ collapses on the $E^2$-page \cite{Wilson1982}, we find there are arbitrarily long equivariant degree shifting differentials, similar to those observed in Kronholm's study of the cellular spectral sequence \cite{kronholm2010}, in the $RO(C_2)$-graded twisted bar spectral sequences computing the homology of the signed representation spaces $K_{n\sigma},$ where $n \geq 2$.

While the $RO(C_2)$-graded twisted bar spectral sequence is quite complicated in general, the differentials and extensions appear to arise in an extremely structured way, governed by a norm structure. We use our knowledge of $H_\star K_{* \sigma}$ and the $E^\infty$-page to deduce information about the $RO(C_2)$-graded twisted bar spectral sequences computing the homology of $K_{* \sigma}.$ This allows us to write down conjectures concerning many of the differentials in Section \ref{sec:btwbssComps}. Our equivariant computations show that, unlike in the non-equivariant integer graded situation, the $RO(C_2)$-graded twisted bar spectral sequences computing $H_\star K_{n \sigma},$ where $n \geq 2,$ have a rich structure quite distinct from the collapsing bar spectral sequence in the classical nonequivariant case \cite{Wilson1982}. Differences between integer graded and $RO(C_2)$-graded bar and twisted bar spectral sequences are discussed in Section \ref{sec:btwbssComps}.

In parallel with calculating the homology of a space, the corresponding computational tools are worth investigating in a purely algebraic setting. This study of the homological algebra involved produces tools which can also be applied in settings outside of topology. One example of this are Tor functors, the derived functors of the tensor product of modules over a ring. Besides playing a central role within algebraic topology theorems such as the K\"{u}nneth Theorem and Coefficient Theorem, Tor functors can also be used to calculate the homology of groups, Lie algebras, and associative algebras. Within the context of the classical Ravenel-Wilson Hopf ring method, the identification of the $E^2$-page of the bar spectral sequence with Tor allows for the computations $\text{Tor}^{E[x]}(\mF_p, \mF_p) \simeq \Gamma [s x] $ and $\text{Tor}^{T[x]} (\mF_p, \mF_p) \simeq E [s x] \otimes \Gamma[\phi x],$ where $sx$ is the suspension of $x,$ $\phi x$ is the transpotent, and $T[x]$ is the truncated polynomial ring $\mF_p [ x] / (x^p)$, to be used inductively in the calculations of the mod p homology of Eilenberg-MacLane spaces \cite{Wilson1982} and the Morava K-theory of Eilenberg-MacLane spaces \cite{RavanelWilson1980}. 

In the $C_2$-equivariant setting, the $RO(C_2)$-graded homology of each signed delooping, $K_{V + \sigma},$ of an equivariant Eilenberg-MacLane space, $K_{V},$ also independently arises as the result of a $C_2$-equivariant twisted Tor computation. Thus under favorable circumstances, we believe it should be possible to formulate a twisted bar spectral sequence with $E^2$-page a twisted Tor functor arising as a derived functor of the twisted product of $H \umF_2$-modules and use this to compute the $E^2$-page. However, we have not yet constructed such a spectral sequence. 

Additionally, twisted Tor calculations are not yet well understood, with a complete lack of known examples. Theorems \ref{Thm:signed}, \ref{Thm:HKVgen}, and \ref{Thm:sigmaPlusl} provide a countably infinite number of initial examples, which in turn lend insight on how such calculations might proceed in general. We discuss how the homology $H_\star K_{V + \sigma}$ arises as a result of twisted Tor and give evidence for $Tor_{tw}^{E[x]} (H_\star, H_\star) \simeq E[\sigma x] \otimes \Gamma[\cN_{e}^{C_2}(x)]$, where $\sigma x$ is the signed suspension of $x$ and $\cN_e^{C_2}$ is the norm, under favorable circumstances in Section \ref{sec:futurdir}.

\subsection{Statement of Theorems}\label{subsec:stateThms}

We state our main results. Recall that $H \umF_2$ has distinguished elements $a \in H {\umF_2}_{\{- \sigma\}}$ and $u \in {H \umF_2}_{\{ 1 - \sigma \} }$. 

To describe our answer for $H_\star K_{* \sigma}$, we need notation for $H_\star K_\sigma.$ Let 
\begin{align*}
	e_\sigma \in H_\sigma K_\sigma, && \bar{\alpha_i} \in H_{\rho i} K_\sigma, && (i \geq 0).
\end{align*}
Then the homology, $H_\star K_\sigma,$ is exterior on generators 
\begin{align*}
	e_\sigma, && \bar{\alpha}_{(i)} = \bar{\alpha}_{2^i} && (i \geq 0)
\end{align*}
with coproduct 
\begin{align*}
	\psi (e_\sigma) & = 1 \otimes e_\sigma + e_\sigma \otimes 1 + a (e_\sigma \otimes e_\sigma) \\
	\psi( \bar{\alpha}_{n} ) & = \sum_{i = 0}^n \bar{\alpha}_{n - i} \otimes \bar{\alpha}_i + \sum_{i = 0}^{n-1} u (e_\sigma \bar{\alpha}_{n - 1 - i} \otimes e_\sigma \bar{\alpha}_{i}). \\
\end{align*}

For finite sequences 
$$J = (j_\sigma, j_0, j_1, \cdots) \qquad j_k \geq 0,$$
define
$$(e_\sigma \bar{\alpha})^J = e_\sigma^{\circ j_\sigma} \circ \bar{\alpha}_{(0)}^{\circ j_0} \circ \bar{\alpha}_{(1)}^{\circ j_1} \cdots$$
where the $\circ$-product comes from the pairing $\circ: K_V \wedge K_W \to K_{V + W}.$
\begin{thm*}[Theorem \ref{Thm:signed}]  Then
	$$H_\star K_{*\sigma} \cong \otimes_J E[(e_\sigma \bar{\alpha})^J]$$
	As an algebra where the tensor product is over all J and the coproduct follows by Hopf ring properties from the $\bar{\alpha}$'s. 
\end{thm*}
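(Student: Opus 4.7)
The plan is to combine the Hopf ring structure on the spaces $K_V$ described in Theorem \ref{thm:5.1} with the Behrens--Wilson computational criterion (Lemma 2.8 of \cite{BehrensWilson2018}) to promote the classical Ravenel--Wilson computation of $H_\ast K(\mF_2,\ast)$ to the $C_2$-equivariant statement. Concretely, I would propose the circle products $(e_\sigma \bar\alpha)^J$ as a candidate free exterior basis, identify their images under the forgetful functor with the known free basis of the underlying non-equivariant homology, and then invoke the Behrens--Wilson lemma to conclude that the candidates already freely generate $H_\star K_{*\sigma}$ equivariantly.

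First I would check that each $(e_\sigma \bar\alpha)^J$ is a well-defined element of the correct bidegree. Because $e_\sigma \in H_\sigma K_\sigma$ and $\bar\alpha_{(i)} \in H_{\rho 2^i} K_\sigma$, the graded pairing $\circ \colon K_V \wedge K_W \to K_{V + W}$ from Theorem \ref{thm:5.1} produces
\[ (e_\sigma \bar\alpha)^J \in H_{j_\sigma \sigma + \sum_k j_k \rho 2^k}\, K_{n\sigma}, \qquad n = j_\sigma + \sum_k j_k.\]
Under the forgetful functor $U$ to non-equivariant spaces, $K_{n\sigma}$ becomes $K(\mF_2, n)$, the class $e_\sigma$ forgets to the fundamental class of $K(\mF_2,1)$, and $\bar\alpha_{(i)}$ to the Ravenel--Wilson class of the same name. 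Since $U$ preserves the circle product, $(e_\sigma \bar\alpha)^J$ maps to the Ravenel--Wilson element $(e \bar\alpha)^J$, which by Serre's computation as organized by Ravenel--Wilson \cite{Wilson1982} is a free exterior generator of $H_\ast K(\mF_2,\ast)$ under the Pontryagin product. In particular, the $(e_\sigma \bar\alpha)^J$ are non-zero and non-equivariantly linearly independent.

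Second, I would verify the exterior relations using the point-set multiplicative model of Theorem \ref{thm:5.1}: because the $H$-space multiplication on $K_V$ comes from the $\Omega$-spectrum structure and is compatible with the circle product, the Pontryagin square of $(e_\sigma \bar\alpha)^J$ vanishes for essentially the same reason it does classically, and one checks this is self-consistent with the displayed coproduct formulas for $\psi(e_\sigma)$ and $\psi(\bar\alpha_n)$ over $\mF_2$. With this in hand, the Behrens--Wilson criterion allows one to promote the non-equivariant identification of the $(e \bar\alpha)^J$ as a free exterior basis to the equivariant identification of the $(e_\sigma \bar\alpha)^J$ as a free basis of $H_\star K_{*\sigma}$ over $H \umF_2$, which assembles into the stated tensor product of exterior algebras.

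The principal obstacle will be certifying that the Behrens--Wilson criterion genuinely applies in this twisted $RO(C_2)$-graded setting. The lemma reduces freeness over $H\umF_2$ to a non-equivariant check, but this reduction requires that the bidegrees in which $H_\star K_{*\sigma}$ lives be compatible with a free Mackey-functor decomposition on the proposed generators. Here the twist $a(e_\sigma \otimes e_\sigma)$ in $\psi(e_\sigma)$ and its propagation to $\psi(\bar\alpha_n)$ through the $u$-term must be shown not to introduce hidden $a$- or $u$-extensions between different circle products that would obstruct the clean exterior-algebra description. Reconciling the explicit Hopf ring model of Theorem \ref{thm:5.1} with this Mackey-functor-level freeness criterion is where the bulk of the technical work is concentrated.
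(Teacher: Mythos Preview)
Your overall strategy is the same as the paper's, but there is a genuine gap: the Behrens--Wilson lemma has \emph{two} hypotheses, and you only verify one of them. You check that $\Phi^e\big((e_\sigma\bar\alpha)^J\big)$ gives the Ravenel--Wilson basis of $H_*K_n$, but you never address $\Phi^{C_2}$, the geometric fixed-point condition. Your closing paragraph about ``hidden $a$- or $u$-extensions'' and ``Mackey-functor-level freeness'' suggests you are treating the lemma as a one-input criterion and worrying about side conditions; in fact the lemma is clean once \emph{both} $\Phi^e$ and $\Phi^{C_2}$ are checked, and it is the second check that carries the equivariant content.

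Concretely, what is missing is: identify $(K_{n\sigma})^{C_2}$ (the paper uses Caruso's result that it is $K_0\times K_1\times\cdots\times K_n$), understand how the $\circ$-product from Theorem~\ref{thm:5.1} descends to fixed points (it recovers the classical nonequivariant $\circ$-product on these factors), and then compute that $\Phi^{C_2}\big((e_\sigma\bar\alpha)^J\big)=e_0^{\circ j_\sigma}\circ a_{(0)}^{\circ j_0}\circ a_{(1)}^{\circ j_1}\circ\cdots$ lands on a basis of $H_*\big((K_{n\sigma})^{C_2}\big)$. Only with that in hand does the lemma fire. Your verification of the exterior relations via the Hopf ring distributive law is essentially what the paper does and is fine.
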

\noindent Interestingly, this answer mirrors the classical non-equivariant answer at the prime 2 \cite{RavanelWilson1980}.

From there, we use the $RO(C_2)$-graded bar spectral sequence to inductively compute $H_\star K_{i \sigma + j}$ from $H_\star K_{i \sigma}$ and show
\begin{thm*}[Theorem \ref{Thm:HKVgen}]
	The $RO(C_2)$-graded homology of $K_V,$ where $\sigma + 1 \subset V,$ is exterior on generators given by the cycles on the $E^2$-page of the $RO(C_2)$-graded spectral sequence computing $H_\star B K_{V - 1}$.
\end{thm*}

For the spaces $K_{\sigma + *},$ we name all homology generators in terms of the Hopf ring structure. To describe these rings, we need notation for $H_\star K_1,$ $H_\star K_2,$ and $H_\star K_\rho.$ Let
$$	e_1 \in H_1 K_1, \qquad \qquad \alpha_i \in H_{2i} K_1, \qquad \qquad \beta_i \in H_{2i} C P^\infty \qquad \qquad i \geq 0. $$
This gives generators
$$e_1, \qquad \alpha_{(i)} = \alpha_{p^i} \qquad \beta_{(i)} = \beta_{p^i},$$
of $H_\star K_1$ and $H_\star K_2$ with coproducts
$$\psi(\alpha_n) = \displaystyle{ \sum_{i = 0}^n }\alpha_{n - i } \otimes \alpha_i, \qquad \psi(\beta_n) = \displaystyle{ \sum_{i = 0}^n \beta_{n - i} \otimes \beta_i } .$$
Also let 
$$\bar{\beta}_{i} \in H_{\rho i} K(\umZ, \rho), \, \qquad (i \geq 0).$$
This gives additional generators,
$$\bar{\beta}_{(i)} = \bar{\beta}_{2^i} \quad (i \geq 0), $$
of $H_\star K_\rho$ with coproduct  
$$ \psi( \bar{\beta}_{n} ) = \sum_{i = 0}^n \bar{\beta}_{n - i} \otimes \bar{\beta}_i.$$
Then for finite sequences 
$$ I = (i_1, i_2, \cdots, i_k ), \qquad 0 \leq i_1 < i_2 < \cdots, $$
$$ W = (w_1, w_2, \cdots, w_q), \qquad 0 \leq w_1 < w_2 < \cdots,$$ 
$$	J = (j_{-1}, j_0, j_1, \cdots, j_\ell ), \qquad \text{where } j_{-1} \in \{0,1\} \text{ and all other } j_n \geq 0, $$
and 
$$ Y = (y_{-1}, y_0, y_1, \cdots, y_r ), \qquad \text{where } y_{-1} \in \{0,1\} \text{ and all other } y_n \geq 0, $$
define
$$(e_1 \alpha \beta)^{I,J} = e_1^{\circ j_{-1}} \circ \alpha_{(i_1)} \circ \alpha_{(i_2)} \circ \cdots \circ \alpha_{(i_k)} \circ \beta_{(0)}^{\circ j_0} \circ \beta_{(1)}^{\circ j_1} \circ \cdots \circ \beta_{(\ell)}^{\circ j_\ell},$$
$$(e_1 \alpha \beta)^{W,Y} = e_1^{\circ y_{-1}} \circ \alpha_{(w_1)} \circ \alpha_{(w_2)} \circ \cdots \circ \alpha_{(w_q)} \circ \beta_{(0)}^{\circ y_0} \circ \beta_{(1)}^{\circ y_1} \circ \cdots \circ \beta_{(r)}^{\circ j_r},$$
$$ \vert I \vert = k, \qquad \vert W \vert = q \qquad \vert \vert J \vert \vert = \Sigma j_n, \qquad \text {and } \vert \vert Y \vert \vert = \Sigma y_n.$$

\begin{thm*}[Theorem \ref{Thm:sigmaPlusl}] We have
	$$ H_\star K_{\sigma + i} \cong E[(e_1 \alpha \beta)^{I,J} \circ \bar{\alpha}_{(m)}, (e_1 \alpha \beta)^{W, Y} \circ \bar{\beta}_{(t)}] $$
	where $m > i_k$ and $m \geq \ell,$ $t > w_q$ and $t \geq y_r,$ $\vert I \vert + 2 \vert \vert J \vert \vert = i$ and $\vert W \vert + 2 \vert \vert Y \vert \vert = i - 1,$  and the coproduct follows by Hopf ring properties from the $\alpha_{(i)}$'s, $\beta_{(i)}$'s, $\bar{\alpha}_{(i)}$'s and $\bar{\beta}_{(i)}$'s. 
\end{thm*}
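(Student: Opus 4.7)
The plan is to argue by induction on $i$ using the $RO(C_2)$-graded bar spectral sequence
$$E^2 = \mathrm{Tor}^{H_\star K_{\sigma + i - 1}}(H\umF_2, H\umF_2) \Rightarrow H_\star K_{\sigma + i}$$
arising from the trivial-representation path-loop fibration $K_{\sigma + i - 1} \to PK_{\sigma + i} \to K_{\sigma + i}$. The base case $i = 0$ is Theorem \ref{Thm:signed}; for $i = 1$ I would additionally use the identification $K_\rho = K_{\sigma + 1}$ to introduce the new classes $\bar{\beta}_{(t)}$ coming from $H_\star K(\umZ, \rho)$.

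First, I would write down candidate classes of the form $(e_1 \alpha \beta)^{I,J} \circ \bar{\alpha}_{(m)}$ and $(e_1 \alpha \beta)^{W,Y} \circ \bar{\beta}_{(t)}$ built from generators of $H_\star K_1$, $H_\star K_2$, $H_\star K_\sigma$, and $H_\star K_\rho$ via the circle product $\circ: K_V \wedge K_W \to K_{V+W}$ whose multiplicative structure is described by Theorem \ref{thm:5.1}. Since these elements already exist in $H_\star K_{\sigma + i}$ before passing to the associated graded, they are automatically permanent cycles in the bar spectral sequence. The constraints $m > i_k$, $m \geq \ell$, $t > w_q$, $t \geq y_r$ encode the exterior Hopf ring relations and should give a minimal list.

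To show the proposed list is a free basis, I would apply the Behrens-Wilson computational lemma (Lemma 2.8 of \cite{BehrensWilson2018}), which reduces the verification of a candidate $RO(C_2)$-graded basis to checks on the underlying integer-graded homology and on the sign-representation-graded slice. The integer-graded slice matches the classical Ravenel-Wilson presentation at $p = 2$ (Theorem \ref{Thm:RWp2}), accounting for the factors indexed by $(I,J)$ and $(W,Y)$, while the signed slice is controlled by Theorem \ref{Thm:signed} via the $\circ$-multiplication with $\bar{\alpha}_{(m)}$ and $\bar{\beta}_{(t)}$. Ranks then match, forcing the proposed classes to be a free basis.

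The principal obstacle is ensuring that there are no further permanent cycles or hidden generators beyond those produced by circle products. This is delicate precisely because, as foreshadowed in Section \ref{sec:multStrs}, the analogous strategy breaks down as soon as $2\sigma + 1 \subset V$; the argument must therefore use in an essential way that $\sigma$ appears with multiplicity one in $V = \sigma + i$. Concretely, I expect to have to propagate the inductive hypothesis through the Tor computation on the $E^2$-page and check term by term that every bar-complex cycle is captured by a circle product of previously identified generators, with the constraints on $(I, J, m)$ and $(W, Y, t)$ providing exactly the right bookkeeping. Once the generators are identified, the coproduct formula follows formally from the multiplicativity of $\circ$ together with the known coproducts on $\alpha_{(i)}, \beta_{(i)}, \bar{\alpha}_{(i)}, \bar{\beta}_{(i)}$.
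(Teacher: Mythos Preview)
Your overall strategy---write down circle-product candidates and verify them with the Behrens--Wilson lemma---is exactly the paper's. But there is a concrete gap in how you describe the second Behrens--Wilson check. The lemma does not ask for an ``integer-graded slice'' and a ``sign-representation-graded slice''; it asks that $\{\Phi^e(b_i)\}$ be a basis of $H_*(X^e)$ and that $\{\Phi^{C_2}(b_i)\}$ be a basis of $H_*(X^{\Phi C_2})$, both of which are \emph{nonequivariant} checks. For $X=K_{\sigma+i}$ the geometric fixed points are identified by Caruso's proposition as $K_{\sigma+i}^{C_2}\simeq K_{i+1}\times K_i$, and the paper verifies that under $\Phi^{C_2}$ one has $(e_1\alpha\beta)^{I,J}\circ\bar{\alpha}_{(m)}\mapsto (e_1\alpha\beta)^{I,J}\circ a_{(m)}$ and $(e_1\alpha\beta)^{W,Y}\circ\bar{\beta}_{(t)}\mapsto (e_1\alpha\beta)^{W,Y}\circ a_{(t)}$, which is precisely the nonequivariant Ravenel--Wilson basis for $H_*(K_{i+1}\times K_i)$. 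Theorem~\ref{Thm:signed} is not the input here; the fixed-point check is again governed by the classical $p=2$ answer, just for a \emph{product} of Eilenberg--MacLane spaces.

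Your ``principal obstacle'' paragraph is also unnecessary work. Once Behrens--Wilson certifies that your candidate set is a free $H_\star$-basis, there is nothing left to rule out: freeness determines the module completely, so no separate argument about hidden permanent cycles in the bar spectral sequence is needed. In the paper the bar spectral sequence serves only to \emph{locate} the candidate generators; the proof itself is the two-line Behrens--Wilson check plus the Hopf-ring deduction of the multiplicative and comultiplicative structure as in Theorem~\ref{Thm:signed}. The reason the argument fails for $2\sigma+1\subset V$ is not that extra permanent cycles sneak in, but that one cannot even write down a candidate set whose images under $\Phi^e$ and $\Phi^{C_2}$ hit bases (the cycles $\phi^{(k)}(e_\sigma\circ e_\sigma)$ have no circle-product name).
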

We observe that this equivariant answer mirrors the classical non-equivariant answer for odd primes \cite{RavanelWilson1980}. For the reader's convenience, we explicitly write some low dimensional instances of the theorem. In particular, we have
$$H_\star K_\rho \cong E[ e_1 \circ \bar{\alpha}_{(i)}, \alpha_{(i_1)} \circ \bar{\alpha}_{(i_2)}, \bar{\beta}_{(i)}] $$
and
$$H_\star K_{\sigma + 2} \cong E[e_1 \circ \alpha_{(i_1)} \circ \bar{\alpha}_{(i_2)}, \alpha_{(i_1)} \circ \alpha_{(i_2)} \circ \bar{\alpha}_{(i_3)}, e_1 \circ \bar{\beta}_{(i_1)}, \beta_{(j_1)} \circ \bar{\alpha}_{(j_2)}, \alpha_{(i_1)} \circ \bar{\beta}_{(i_2)} ] $$ where $i_1 < i_2,$ $j_1 \leq j_2;$ and the coproduct follows by Hopf ring properties from the $\alpha_{(i)}$'s, $\beta_{(i)}$'s, $\bar{\alpha}_{(i)}$'s and $\bar{\beta}_{(i)}$'s.

Having computed the homology of the $C_2$-equivariant Eilenberg-MacLane spaces $K_V,$ we turn to using the results to investigate the twisted bar spectral sequence arising from the twisted bar construction. Unlike the non-equivariant bar spectral sequence, the twisted bar spectral sequence $E^2$ page lacks an explicit homological description. This makes computations difficult in general. However, for the spaces $B^\sigma \umF_2 \simeq K_\sigma \simeq \mR P_{tw}^\infty,$ $B^\sigma S^1 \simeq K(\umZ, \rho) \simeq \mC P_{tw}^\infty,$ and $B^\sigma S^\sigma \simeq K(\umZ, 2\sigma),$ there is a gap in the spectral sequence forcing all differentials $d^r$ for $r>1$ to be zero. Further for these spaces, if there were a non-zero $d^1$ differential, we would end up killing a known generator of the underlying non-equivariant integer graded homology and arrive at a contradiction. Thus we can calculate the additive $RO(C_2)$-graded homology of these spaces completely. The multiplicative structure can also be deduced from the twisted bar spectral sequence. 

\begin{exs*}[Example \ref{twistbssExs}] We have
	\begin{align*} 
		H_\star \mathbb{R} P_{tw}^\infty & = E[e_\sigma, \bar{\alpha}_{(0)}, \bar{\alpha}_{(1)}, \cdots ] = E[e_\sigma] \otimes \Gamma [ \bar{\alpha}_{(0)} ], \, \vert e_\sigma \vert = \sigma, \, \vert \bar{\alpha}_{(i)} \vert = \rho 2^i, \\
		H_\star \mathbb{C} P_{tw}^\infty & = E[ \bar{\beta}_{(0)}, \bar{\beta}_{(1)}, \cdots ] = \Gamma [ e_\rho ] \textrm{ where } \vert \bar{\beta}_{(i)} \vert = \rho 2^i. \\ 
	\end{align*}
\end{exs*}

\begin{thm*}[Theorem \ref{Thm:2sigma}] We have
	$$H_\star K(\underline{\mathbb{Z}}, 2 \sigma) = E[ e_{2\sigma} ] \otimes \Gamma[ \bar{x}_{(0)}] \textrm{ where } \vert e_{2\sigma} \vert = 2\sigma, \, \vert \bar{x}_{(0)} \vert = 2 \rho.$$
	
\end{thm*}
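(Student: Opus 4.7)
The plan is to apply the twisted bar spectral sequence to the identification $K(\underline{\mathbb{Z}}, 2\sigma) \simeq B^\sigma S^\sigma$, following the template established in Example \ref{twistbssExs} for $\mathbb{R} P_{tw}^\infty$ and $\mathbb{C} P_{tw}^\infty$. First I would describe the $E^1$-page starting from the reduced homology $\widetilde{H}_\star S^\sigma$, which is a single $H\umF_2$-module generator concentrated in bidegree $\sigma$ (with the evident $a, u$-action). The twisted bar filtration then produces an $E^1$-page concentrated along a sparse collection of $RO(C_2)$-bidegrees dictated by tensor powers of this single class, together with the $\sigma$-shifts coming from the twisting in $B^\sigma$.

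Next I would rule out all higher differentials. The same gap argument used in Example \ref{twistbssExs} applies for $d^r$ with $r > 1$: the $E^1$-page is sparse enough in the relevant $RO(C_2)$-bidegrees that no valid targets exist for such differentials. For $d^1$, a nontrivial differential would necessarily kill a class whose image under the forgetful functor is a generator of $H_\ast \mathbb{C} P^\infty = H_\ast K(\mathbb{Z}, 2)$, since the underlying non-equivariant homotopy type of $K(\underline{\mathbb{Z}}, 2\sigma)$ is $K(\mathbb{Z}, 2)$. This contradicts the known non-equivariant answer, so $d^1 = 0$ and $E^\infty = E^1$.

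From the additive collapse, I would identify the generators as follows: the exterior class $e_{2\sigma}$ arises as the fundamental class in bar-filtration one, while the divided power generators $\gamma_{2^i}(\bar{x}_{(0)})$ in degrees $2^{i+1}\rho$ come from bar-filtration $2^{i+1}$, where the $C_2$-twisting upgrades the tensor power of $\widetilde{H}_\star S^\sigma$ via the norm $\cN_e^{C_2}$, producing the $\rho$-factor rather than a bare $\sigma$-factor. The multiplicative structure $E[e_{2\sigma}] \otimes \Gamma[\bar{x}_{(0)}]$ then follows from Hopf ring and bar multiplicative compatibility (Theorem \ref{thm:5.1}), combined with the Behrens--Wilson-style comparison reducing generation questions to the known non-equivariant multiplicative answer.

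The main obstacle I expect is verifying the multiplicative extension problem; in particular, confirming that no nontrivial relation links $e_{2\sigma}$ to the divided power tower on $\bar{x}_{(0)}$. The key input here is that the underlying non-equivariant computation is the pure divided power algebra $\Gamma[x_2]$, with no exterior factor. Thus $e_{2\sigma}$ must restrict trivially on underlying homology, forcing its degree to be concentrated purely in the $\sigma$-direction and separating it cleanly from the divided power tower, which has nontrivial integer-graded image. Any lingering extension ambiguity is then pinned down by consistency with the Hopf ring coproduct coming from the constituent $\bar{\beta}$-classes in $H_\star K(\umZ, \rho)$ below.
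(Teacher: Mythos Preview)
Your first three paragraphs match the paper's approach exactly: the paper simply says the proof is analogous to Example~\ref{twistbssExs}, i.e., run the twisted bar spectral sequence for $B^\sigma S^\sigma$, kill $d^r$ for $r>1$ by the sparseness/gap argument, and kill $d^1$ by comparison with the underlying nonequivariant homology $H_*K(\mZ,2)=H_*\mC P^\infty$.

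Your final paragraph, however, rests on a false premise. You assert that the underlying ring $\Gamma[x_2]$ has ``no exterior factor'' and conclude that $e_{2\sigma}$ must restrict trivially under $\Phi^e$. Both claims are wrong. Over $\mF_2$ the divided power algebra decomposes as $\Gamma[x_2]\cong E[\gamma_{2^i}(x_2):i\geq 0]$, so it is entirely a tensor product of exterior algebras, and in particular $\gamma_1(x_2)$ is a nonzero exterior class in degree~$2$. Moreover $e_{2\sigma}$ arises from filtration~$1$ of $B^\sigma S^\sigma$, whose underlying nonequivariant class is exactly $\gamma_1(x_2)=\beta_1\in H_2\mC P^\infty$; hence $\Phi^e(e_{2\sigma})=\gamma_1(x_2)\neq 0$. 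The correct picture is that $\Phi^e$ identifies $E[e_{2\sigma}]\otimes\Gamma[\bar{x}_{(0)}]$ with $\Gamma[x_2]$ by sending $e_{2\sigma}\mapsto\gamma_1(x_2)$ and $\bar{x}_{(i)}\mapsto\gamma_{2^{i+1}}(x_2)$. So your proposed mechanism for ``separating'' $e_{2\sigma}$ from the divided-power tower cannot work, and any argument for the multiplicative extensions has to proceed differently---along the lines of Example~\ref{twistbssExs}, via degree considerations on the $E^\infty$-page together with the bar multiplicative structure, rather than via triviality of the underlying restriction.
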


In forthcoming work, we will use the homology of $H_\star K_V$ to deduce differentials in the twisted bar spectral sequence. The beginning stages of this work are described in Section \ref{sec:btwbssComps}. 

\subsection{Paper structure}\label{subsec:paperStruc}

This paper has two primary aims: {(1)} extending Ravenel-Wilson Hopf ring techniques \cite{RavenelWilson1977, RavanelWilson1980, Wilson1982} to $C_2$-equivariant homotopy theory and {(2)} computing the $RO(C_2)$-graded homology of $C_2$-equivariant Eilenberg-MacLane spaces associated to the constant Mackey functor $\umF_2.$ These topics are investigated in several sections.

The first section consists of an introduction providing context for the main results, a description of the paper structure, and a list of notational conventions.

The second section recalls classical Ravenel-Wilson Hopf ring methods. 

The third section recollects material from equivariant homotopy theory necessary for understanding our proof and computations. 

The fourth section details the bar and twisted bar constructions, which are trivial and sign representation delooping functors respectively. 

The fifth section applies the preliminaries of the previous sections to study multiplicative structures on $C_2$-equivariant Eilenberg-MacLane spaces. This section contains some primary extensions of Ravenel-Wilson Hopf ring methods to $C_2$-equivariant homotopy theory (Theorem \ref{thm:5.1}). It also contains our calculation of the $RO(C_2)$-graded homology of many $C_2$-equivariant Eilenberg-MacLane spaces $K_V$ associated to the constant Mackey functor $\mF_2$ (Theorems \ref{Thm:signed}, \ref{Thm:HKVgen}, and \ref{Thm:sigmaPlusl}).

The sixth section details a number of computations and observations regarding the $RO(C_2)$-graded bar and twisted bar spectral sequences. The examples we provide should be a useful stepping stone towards further computations. 

The seventh section describes a few questions of immediate interest given the results of this paper.

\subsection{Notational conventions}\label{subsec:noteConv} \hfil

\noindent The asterisk $*$ denotes integer grading.

\noindent The star $\star$ to denote representation grading. 

\begin{hangingpar}
	\setlength{\hangingindent}{\parindent}
	 By the classical or nonequivariant Eilenberg-MacLane space $K_n,$ we mean the classical nonequivariant Eilenberg-MacLane space $K_n = K(\mF_p, n),$ where $p$ is prime.   
\end{hangingpar}

\begin{hangingpar}
	\setlength{\hangingindent}{\parindent}
	 $C_2$ is the cyclic group of order two with $C_2 = \langle \gamma \rangle.$
\end{hangingpar}

\begin{hangingpar}
	\setlength{\hangingindent}{\parindent}
	 $\sigma$ denotes the one dimensional sign representation of $C_2.$
\end{hangingpar}

\begin{hangingpar}
	\setlength{\hangingindent}{\parindent}
	 $\rho$ is the regular representation of $C_2.$
\end{hangingpar}

\begin{hangingpar}
	\setlength{\hangingindent}{\parindent}
	 $S^V$ is the one point compactification of a finite dimensional real representation $V$ where the point at infinity is given a trivial group action and taken as the base point.
\end{hangingpar}

\begin{hangingpar}
	\setlength{\hangingindent}{\parindent}
	 $\Sigma^V (-) = S^V \wedge -.$
\end{hangingpar}

\begin{hangingpar}
	\setlength{\hangingindent}{\parindent}
	 $\Omega^V(-)$ is the space of continuous based maps $\text{Map}_* (S^V, -)$ where the group action is given by conjugation.
\end{hangingpar}

\begin{hangingpar}
	\setlength{\hangingindent}{\parindent}
	 $\mathcal{S}$ is the category of spectra.
\end{hangingpar}

\begin{hangingpar}
	\setlength{\hangingindent}{\parindent}
	 $\mathcal{S}^G$ is the category of $G$-spectra indexed on a complete universe.
\end{hangingpar}

\subsection*{Acknowledgments}

The author would like to thank Mark Behrens for many helpful conversations and guidance throughout this project. The author is also grateful for conversations with Prasit Bhattacharya, Bertrand Guillou, Mike Hill, and Carissa Slone. The author was partially supported by the National Science Foundation under Grant No. DMS-1547292, and would also like to thank the Max Planck Institute for Mathematics for its hospitality and financial support. 

\section{Classical Ravenel-Wilson Hopf ring methods}\label{sec:RWhopf}

Classically, one place Hopf rings arise in homotopy theory is in the study of $\Omega$-spectra. Consider an $\Omega$-spectrum 
$$G = \{G_k\}$$
and a multiplicative homology theory $E_*(-)$ with a K\"{u}nneth isomorphism for the spaces $G_k$.
The $\Omega$-spectrum $G$ represents a generalized cohomology theory with
$$G^*X \simeq [X, G_*].$$ 
Since $G^k X$ is an abelian group, $G_k$ must be a homotopy commutative $H$-space (in fact $G_k$ is an infinite loop space). This $H$-space structure 
$$*: G_k \times G_k \to G_k$$
gives rise to a product in homology
$$*: E_* G_k \otimes E_* G_k \cong E_*(G_k \times G_k) \to E_* G_k$$ and the K\"{u}nneth isomorphism implies the homology is in fact a Hopf algebra. 

If $G$ is a ring spectrum, then $G^*X$ is a graded ring and the graded abelian group object $G_*$ becomes a graded ring object in the homotopy category. The multiplication
$$G^kX \times G^n X \to G^{k + n} X$$
has a corresponding multiplication in $G_*:$
$$\circ: G_k \times G_n \to G_{k +n}$$
and applying $E_*(-)$ we have 
$$\circ : E_* G_k \otimes_{E_*} E_* G_n \to E_* G_{k + n}$$
turning $E_*G$ into a graded ring object in the category of coalgebras. 

As a ring, $E_*G$ has a distributive law 
\begin{equation} \label{eq:distlaw}
	x \circ (y * z) = \sum \pm (x' \circ y) * (x" \circ z) \text{ where } \psi(x) = \sum x' \otimes x"
\end{equation}
coming from the distributive law in $G^* X.$ 

Ravenel and Wilson pursued the idea that these two products could be used to construct many elements in homology from just a few. They successfully applied this approach to compute the Hopf ring for complex cobordism \cite{RavenelWilson1977}, the Morava K-theory of nonequivariant Eilenberg-MacLane spaces \cite{RavanelWilson1980}, and the mod $p$ homology of classical Eilenberg-MacLane spaces \cite{Wilson1982}. 

In the case of classical Eilenberg-MacLane spaces, the Eilenberg-MacLane spectrum
$$H \mF_p = \{K( \mF_p, n)\} = \{K_n\},$$
is a ring spectrum with $\Omega K_{n + 1} \simeq K_n$. Further, $H_*(-) := H_*(-; \mF_p)$, ordinary homology with mod p coefficients, has a K\"{u}nneth isomorphism and thus the homology $H_*K_*$ has the structure of a Hopf ring. 

A key computational insight of Ravenel and Wilson was that the bar spectral sequence 
$$E_{*, *}^2 \simeq Tor_{*,*}^{E_*G_k} (E_*, E_*) \Rightarrow E_* G_{k + 1} $$ is in fact a spectral sequence of Hopf algebras. 
The additional structure of the $\circ$ multiplication in the bar spectral sequence meant that they could inductively deduce the homology of Eilenberg-MacLane spaces using standard homological algebra. Starting with elements in $H_* K_1$ and $H_* \mC P^\infty$ and identifying circle products in the bar spectral sequence, Ravenel and Wilson computed the Hopf ring associated to the mod p Eilenberg-MacLane spectrum \cite{Wilson1982}.

To describe their answer, let
$$	e_1 \in H_1 K_1, \qquad \alpha_i \in H_{2i} K_1, \qquad \beta_i \in H_{2i} C P^\infty \qquad i \geq 0. $$
The generators are
$$e_1, \qquad \alpha_{(i)} = \alpha_{p^i} \qquad \beta_{(i)} = \beta_{p^i} $$
with coproduct 
$$\psi(\alpha_n) = \displaystyle{ \sum_{i = 0}^n }\alpha_{n - i } \otimes \alpha_i, \qquad \psi(\beta_n) = \displaystyle{ \sum_{i = 0}^n \beta_{n - i} \otimes \beta_i } .$$
For finite sequences,
$$ I = (i_1, i_2, \cdots ), \qquad 0 \leq i_1 < i_2 < \cdots, $$
and 
$$	J = (j_0, j_1, \cdots ), \qquad j_k \geq 0, $$
define
$$\alpha_I = \alpha_{(i_1)} \circ \alpha_{(i_2)} \circ \cdots, \\
\beta^J = \beta_{(0)}^{\circ j_0} \circ \beta_{(1)}^{\circ j_1} \circ \cdots, $$
and let	$T(x)$ denote the truncated polynomial algebra $\mF_p[x] / (x^p) $. 

\begin{thm}[Ravenel-Wilson \cite{Wilson1982}] \label{Thm:RWpodd} We have
	$$H_* K_* \simeq \otimes_{I, J} E(e_1 \circ \alpha_I \circ \beta^J) \otimes_{I, J} T(\alpha_I \circ \beta^J)$$
	as an algebra where the tensor product is over all I, J and the coproduct follows by Hopf ring properties from the $\alpha$'s and $\beta$'s. 
\end{thm}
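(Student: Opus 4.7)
The plan is to induct on $n$, computing $H_* K_{n+1}$ from $H_* K_n$ via the bar spectral sequence
$$E^2_{*,*} \simeq \text{Tor}_{*,*}^{H_* K_n}(\mF_p, \mF_p) \Rightarrow H_* K_{n+1}.$$
The base cases $H_* K_1$ (exterior on $e_1$ tensor truncated polynomial on the $\alpha_{(i)}$) and $H_* \mC P^\infty = H_* K(\ZZ,2)$ (divided power on $\beta_{(i)}$, whose image in $H_* K_2$ under the comparison map $\mC P^\infty \to K_2$ supplies the $\beta$-generators) are classical. The inductive hypothesis is that $H_* K_n$ takes the claimed form: a tensor product of exterior algebras $E(e_1 \circ \alpha_I \circ \beta^J)$ on odd-degree circle products and truncated polynomial algebras $T(\alpha_I \circ \beta^J)$ on even-degree ones.

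The key algebraic inputs are
$$\text{Tor}^{E[x]}(\mF_p, \mF_p) \simeq \Gamma[sx], \qquad \text{Tor}^{T[x]}(\mF_p, \mF_p) \simeq E[sx] \otimes \Gamma[\phi x],$$
where $sx$ is the suspension class and $\phi x$ the transpotent. Since Tor commutes with tensor products of algebras, applying these factorwise to the inductive description of $H_* K_n$ yields an $E^2$-page of the same tensor-product shape, with new generators indexed by $sx$'s and $\phi x$'s of the old generators. The central geometric step is to identify each such abstract class with a concrete Hopf ring element in $H_* K_{n+1}$. Here one uses that the bar spectral sequence is a spectral sequence of Hopf algebras and is multiplicative for the circle product, since $\circ$ is induced by a map of ring spectra: this forces $s(\alpha_I \circ \beta^J)$ to be represented by $e_1 \circ \alpha_I \circ \beta^J$, and $\phi(\alpha_I \circ \beta^J)$ by a circle product in which a shifted $\beta_{(k)}$ factor is appended. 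The distributive law \eqref{eq:distlaw} then pins down the multiplicative relations on $E^\infty$ so that they match the claimed Hopf ring presentation.

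With these identifications in place, the listed generators are exhibited as permanent cycles by construction, and the spectral sequence collapses for dimensional reasons: the asserted Poincar\'e series matches that of the $E^2$-page factor by factor, leaving no room for differentials or hidden extensions. The main obstacle is precisely the identification step: abstract divided-power classes $\gamma_{p^k}(sx)$ on the $E^2$-page are determined only modulo lower filtration, and one must confirm that the proposed circle products really do detect them rather than some correction term. This is handled inductively using the Hopf algebra structure of the spectral sequence to pin down indecomposables and primitives, together with a direct verification in the initial case, where the transpotent in the bar spectral sequence converging to $H_* K_2$ is matched with $\beta_{(0)}$ from $H_* \mC P^\infty$ via the comparison $\mC P^\infty \to K_2$.
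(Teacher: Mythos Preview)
The paper does not prove this theorem; it is quoted as a classical result of Ravenel and Wilson and cited to \cite{Wilson1982}. The only ``proof'' content in the paper is the paragraph immediately preceding the statement, which sketches in a few lines exactly the approach you describe: induct via the bar spectral sequence, use that it is a spectral sequence of Hopf algebras with a compatible $\circ$-pairing, and identify the Tor generators with circle products starting from $H_* K_1$ and $H_* \mC P^\infty$. Your proposal is a faithful expansion of that sketch and of the original Ravenel--Wilson argument.

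One point worth tightening: your collapse argument (``the asserted Poincar\'e series matches that of the $E^2$-page factor by factor, leaving no room for differentials'') is circular as phrased, since the asserted series is what you are trying to establish. In the actual Ravenel--Wilson proof the collapse is not obtained by a global dimension count but rather by showing that every indecomposable on $E^2$ is detected by a concrete circle product (hence is a permanent cycle) and then invoking the Hopf algebra structure of the spectral sequence to propagate this to all of $E^2$; you already identify this as ``the main obstacle,'' and that is indeed where the real content lies.
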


When the prime $p = 2,$ there are additional relations $e_1 \circ e_1 = \beta_{(0)}$ and $\alpha_{(i - 1)} \circ \alpha_{(i - 1)} = \beta_{(i)}.$ In this case, the theorem can be stated using only circle products of generators of $\mR P^\infty.$ 

For finite sequences 
$$I = (i_{(-1)}, i_0, i_1, i_2, \cdots ), \qquad \qquad i_k \geq 0  $$
define
$$(e_1 \alpha)^I = e_1^{\circ i_{(-1)}} \circ \alpha_{(0)}^{\circ i_0} \circ \alpha_{(1)}^{\circ i_1} \circ \cdots.$$

\begin{thm}[Ravenel-Wilson \cite{Wilson1982}] \label{Thm:RWp2} Then
	$$H_* K_{n} \cong \otimes_I E[(e_1 \alpha)^I] $$
	where $\sum i_k = n,$ and considering all spaces at once, 
	$$H_* K_* \simeq \otimes_{I} E[(e_1 \alpha)^I]$$
	as an algebra where the tensor product is over all I and the coproduct follows by Hopf ring properties from the $\alpha$'s. 
\end{thm}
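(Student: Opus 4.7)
The plan is to proceed by induction on $n$ via the classical bar spectral sequence
$$E^2_{s,t} = \text{Tor}^{H_* K_n}_{s,t}(\mF_2, \mF_2) \Rightarrow H_{s+t} K_{n+1}$$
coming from the path-loop fibration $K_n \to PK_{n+1} \to K_{n+1}$, combined with the Hopf ring structure provided by the $\circ$-pairing $K_m \wedge K_n \to K_{m+n}$. For the base case $n = 1$, $H_* K_1 = H_*(\mR P^\infty; \mF_2)$ has a basis $\{\alpha_i\}_{i \geq 0}$ dual to $\mF_2[t]$; dualizing $\mu^*(t^n) = (t \otimes 1 + 1 \otimes t)^n$ gives the Pontryagin product $\alpha_i * \alpha_j = \binom{i+j}{i} \alpha_{i+j}$, which by Lucas's theorem is nonzero mod $2$ exactly when the binary supports of $i$ and $j$ are disjoint. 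Thus $H_* K_1 \cong \bigotimes_{i \geq 0} E[\alpha_{(i)}]$ with $\alpha_{(i)} = \alpha_{2^i}$, matching the theorem.

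For the inductive step, assume the claim for $n$. Since $H_* K_n$ is a tensor of exterior Hopf algebras by hypothesis, the $E^2$-page of the bar spectral sequence is a tensor of divided power algebras
$$E^2 \cong \bigotimes_{\sum i_k = n} \Gamma[s(e_1 \alpha)^I],$$
and at $p = 2$ each $\Gamma[y]$ further decomposes as $\bigotimes_{j \geq 0} E[\gamma_{2^j}(y)]$, so $E^2$ itself is exterior. Next, identify each divided power class $\gamma_{2^j}(s(e_1 \alpha)^I) \in E^2$ with an explicit $\circ$-product $(e_1 \alpha)^{I'} \in H_* K_{n+1}$, where $I'$ is obtained by adjoining an appropriate $e_1$ or $\alpha_{(k)}$ factor. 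This uses that the bar spectral sequence is one of Hopf rings, the distributive law $x \circ (y * z) = \sum (x' \circ y) * (x'' \circ z)$ from \eqref{eq:distlaw}, and the Hopf ring relations $e_1 \circ e_1 = \beta_{(0)}$ and $\alpha_{(i-1)} \circ \alpha_{(i-1)} = \beta_{(i)}$ valid at the prime $2$.

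Having exhibited these $(e_1 \alpha)^{I'}$ in $H_* K_{n+1}$ as permanent cycles detecting the proposed $E^\infty$ generators, and having verified they are $\mF_2$-linearly independent via the Hopf ring relations, a Poincar\'e series count confirms that they exhaust the $E^\infty$-page. This forces collapse at $E^2$ and rules out nontrivial multiplicative extensions; assembling the generators as $I$ ranges over sequences with $\sum i_k = n + 1$ then yields $H_* K_{n+1} \cong \bigotimes_I E[(e_1 \alpha)^I]$, which passing to the colimit over $n$ gives the stated description of $H_* K_*$.

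The main obstacle will be the identification step: translating homological-algebra divided powers on the bar $E^2$-page into explicit $\circ$-products in $H_* K_{n+1}$. Making this dictionary precise requires a careful exploitation of the Hopf ring structure---in particular, understanding how the bar construction's suspension map and its iterated divided powers interact with the $\circ$-pairing and with the relations coming from the ring spectrum structure on $H\mF_2$---along the lines of the Ravenel--Wilson computation \cite{Wilson1982}.
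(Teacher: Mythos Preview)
The paper does not give its own proof of this statement: Theorem~\ref{Thm:RWp2} is quoted as a result of Ravenel--Wilson with a citation to \cite{Wilson1982}, and Section~\ref{sec:RWhopf} only summarizes their method informally (bar spectral sequence plus Hopf ring $\circ$-products to identify permanent cycles). Your proposal is exactly that summary made slightly more explicit---induction on $n$ via the bar spectral sequence, $\mathrm{Tor}$ over exterior algebras giving divided powers, decomposing $\Gamma[y]$ as a tensor of exteriors at $p=2$, and identifying the $\gamma_{2^j}$ with $\circ$-products---so your approach agrees with what the paper attributes to Ravenel--Wilson.

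One minor point: your base case uses $\alpha_i$ for the degree-$i$ class in $H_*\mR P^\infty$, whereas the paper reserves $\alpha_i$ for the degree-$2i$ class and writes $e_1$ separately for the degree-$1$ class. This is only a notational clash (your $\alpha_{(0)}$ is the paper's $e_1$, your $\alpha_{(i)}$ for $i\geq 1$ is the paper's $\alpha_{(i-1)}$), but it would be worth aligning with the paper's conventions so that the sequences $I=(i_{-1},i_0,i_1,\dots)$ in the theorem statement match your enumeration of generators.
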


Ravenel and Wilson also show that homology suspending $\beta_{(i)}$ to define
$$\xi_i \in H_{2(p^i - 1)} H,$$
and $\alpha_{(i)}$ to define 
$$\tau_i \in H_{2p^i - 1}H, $$
Theorem \ref{Thm:RWpodd} then implies that stably,
$$H_*H \simeq E[\tau_0, \tau_1, \cdots] \otimes P[\xi_1, \xi_2, \cdots ].$$

\section{Equivariant preliminaries}\label{sec:equivarPrelims}

We set notation and recall equivariant foundations. Throughout, the group $G = C_2$.

Given an orthogonal real $G$-representation $V,$ $S^V$ denotes the representation sphere given by the one-point compactification of $V.$ For a $p$-dimensional real $C_2$ - representation $V,$ we write
$$V \cong \mathbb{R}^{(p-q,0)} \oplus \mathbb{R}^{(q,q)} $$
where $\mR^{(1,0)}$ is the trivial 1-dimensional real representation of $C_2$ and $\mR^{(1,1)}$ is the sign representation. We allow $p$ and $q$ to be integers, so $V$ may be a virtual representation. The integer $p$ is called the topological dimension while $q$ is the weight or twisted dimension of $V \cong \mR^{(p,q)}.$

The $V$-th graded component of the ordinary $RO(C_2)$-graded Bredon equivariant homology of a $C_2$-space $X$ with coefficients in the constant Mackey functor $\umF_2$ is denoted $H_V^{C_2}(X ; \umF_2) = H_{p,q} (X; \umF_2).$ To consider all representations at once we write $H_\star (X),$ and when working nonequivariantly $H_* (X^e)$ denotes the singular homology of the underlying topological space with $\mF_2$ coefficients.

It is often convenient to plot the bigraded homology in the plane. Our plots have the topological dimension p on the horizontal axis and the weight q on the vertical axis. 

The homology of a point with coefficients in the constant Mackey functor $\umF_2$, is the bi-graded ring 
$$H_\star (pt, \umF_2) = \mF_2[a, u] \oplus \frac{\mF_2[a,u]}{(a^\infty, u^\infty)} \{ \theta \}$$
where $\vert a \vert = -\sigma,$ $\vert u \vert = 1 - \sigma,$ and $\vert \theta \vert = 2\sigma - 2.$ A bi-graded plot of $H_\star (pt, \umF_2)$ appears in Figure \ref{fig:HomPt}. The image on the left is more detailed with each lattice point within the two cones representing a copy of $\mF_2.$ The image on the right is a more succinct representation and appears in figures illustrating our spectral sequence computations.  

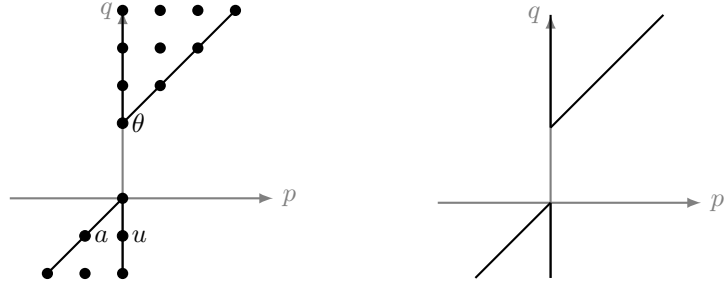
\begin{figure}[ht]	
	\begin{center}
		\begin{tikzpicture}[scale=0.5]
			
			\coordinate (Origin)   at (0,0);
			\coordinate (XAxisMin) at (-3,0);
			\coordinate (XAxisMax) at (4,0);
			\coordinate (YAxisMin) at (0,-2);
			\coordinate (YAxisMax) at (0,5);
			\draw [thick, gray,-latex] (XAxisMin) -- (XAxisMax) node[anchor=west] {$p$};
			\draw [thick, gray,-latex] (YAxisMin) -- (YAxisMax) node[anchor=east] {$q$};
			
			\draw [thick] (0,2) -- (0,5);
			\draw [thick] (0,2) -- (3,5);
			\draw [thick] (0,0) -- (0,-2);
			\draw [thick] (0,0) -- (-2,-2);
			
			\node at (0,0) [circle,fill,inner sep=1.5pt]{};
			\filldraw[black] (0,-1) circle (4pt) node[anchor=west] {$u$};
			\node at (0,-2) [circle,fill,inner sep=1.5pt]{};
			\filldraw[black] (-1,-1) circle (4pt) node[anchor=west] {$a$};
			\node at (-1,-2) [circle,fill,inner sep=1.5pt]{};
			\node at (-2,-2) [circle,fill,inner sep=1.5pt]{};
			
			\filldraw[black] (0,2) circle (4pt) node[anchor=west] {$\theta$};
			\node at (0,3) [circle,fill,inner sep=1.5pt]{};
			\node at (0,4) [circle,fill,inner sep=1.5pt]{};
			\node at (0,5) [circle,fill,inner sep=1.5pt]{};
			\node at (1,3) [circle,fill,inner sep=1.5pt]{};
			\node at (1,4) [circle,fill,inner sep=1.5pt]{};
			\node at (1,5) [circle,fill,inner sep=1.5pt]{};
			\node at (2,4) [circle,fill,inner sep=1.5pt]{};
			\node at (2,5) [circle,fill,inner sep=1.5pt]{};
			\node at (3,5) [circle,fill,inner sep=1.5pt]{};
			
		\end{tikzpicture}
		\begin{tikzpicture}[scale=0.5]
			\node at (-6,0) {$\,$};
			
			\coordinate (Origin)   at (0,0);
			\coordinate (XAxisMin) at (-3,0);
			\coordinate (XAxisMax) at (4,0);
			\coordinate (YAxisMin) at (0,-2);
			\coordinate (YAxisMax) at (0,5);
			\draw [thick, gray,-latex] (XAxisMin) -- (XAxisMax) node[anchor=west] {$p$};
			\draw [thick, gray,-latex] (YAxisMin) -- (YAxisMax) node[anchor=east] {$q$};
			
			\draw [thick] (0,2) -- (0,5);
			\draw [thick] (0,2) -- (3,5);
			\draw [thick] (0,0) -- (0,-2);
			\draw [thick] (0,0) -- (-2,-2);
			
			\filldraw[white] (1,5) circle (2pt) node[anchor=west] {};
		\end{tikzpicture}
	\end{center}
	\caption{$H_\star(pt, \umF_2)$ with axis gradings determined by $V \simeq \mathbb{R}^{p-q} \oplus \mathbb{R}^{q \sigma}.$ }
	\label{fig:HomPt}
\end{figure}

The genuine equivariant Eilenberg-MacLane spectrum representing $H_\star(-)$ is $H\umF_2,$ the Eilenberg-MacLane spectrum for the $C_2$ constant Mackey functor $\umF_2.$ It has underlying nonequivariant spectrum $H \mF_2.$ We denote the spaces of $H \umF_2$ 
$$H \underline{\mathbb{F}}_2 = \{K(\underline{\mathbb{F}}_2, V)\}_{V \cong k \sigma + l} = \{K_V\}_{V \cong k \sigma + l}.$$
Analogously to the nonequivariant case, $H \underline{\mathbb{F}}_2$ is characterized up to $C_2$-equivariant homotopy by $H^V(X; \underline{\mathbb{F}}_2) = [X, K_V]$ naturally for all $C_2$-spaces X.

We recall a computational lemma due to Behrens and Wilson \cite{BehrensWilson2018}, which allows us to check whether a set of elements in the $RO(C_2)$-homology in fact forms a free basis for $H_\star(X),$ greatly simplifying our computations. To state this lemma, we first define two homomorphisms, $\Phi^e$ and $\Phi^{C_2}.$ Let $Ca$ be the cofiber of the Euler class $a \in \pi_{-\sigma}^{C_2} S$ given geometrically by the inclusion 
$$S^0 \hookrightarrow S^\sigma .$$ Applying $\pi_V^{C_2}$ to the map
$$H \wedge X \to H \wedge X \wedge Ca, $$
we get a homomorphism 
$$\Phi^e: H_V(X) \to H_{\vert V \vert }(X^e).$$
Taking geometric fixed points of a map
$$S^V \to H \wedge X $$
gives a map 
$$S^{V^{C_2}} \to H^{\Phi C_2} \wedge X^{\Phi C_2}.$$
Using the equivalence $H_*^\Phi X \simeq H_*(X^{\Phi C_2}) [a^{-1} u] $ coming from $H^{\Phi C_2} \simeq \displaystyle{\bigvee_{i \geq 0} } \Sigma^i H \mF_2 $ and passing to the quotient by the ideal generated by $a^{-1} u$ gives the homomorphism
$$\Phi^{C_2}: H_V(X) \to H_{\vert V^{C_2} \vert} (X^{\Phi C_2}).$$

\begin{lemma}[Behrens-Wilson \cite{BehrensWilson2018}] \label{lem:BW}
	Suppose $X \in Sp^{C_2}$ and $\{b_i\}$ is a set of elements of $H_\star (X)$ such that 
	\begin{itemize}
		\item[(1)] $\{\Phi^e (b_i) \}$ is a basis of $H_*(X^e)$ and
		\item[(2)] $\{\Phi^{C_2} (b_i)\}$ is a basis of $H_*(X^{\Phi C_2}),$ 
	\end{itemize}
	then $H_\star (X)$ is free over $H_\star$ and $\{b_i\}$ is a basis. 
\end{lemma}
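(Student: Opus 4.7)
Plan: realize the claimed basis as a map of $H\umF_2$-module spectra, then verify it is an equivalence of $C_2$-spectra by an isotropy separation argument.

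First, each class $b_i \in H_{V_i}(X) = \pi^{C_2}_{V_i}(H\umF_2 \wedge X)$ is represented by a map $S^{V_i} \to H\umF_2 \wedge X$ which, using the $H\umF_2$-module structure on the target, extends uniquely to an $H\umF_2$-module map $\Sigma^{V_i} H\umF_2 \to H\umF_2 \wedge X$. Wedging over $i$, I obtain the $H\umF_2$-module map
$$\tilde\varphi : \bigvee_i \Sigma^{V_i} H\umF_2 \;\longrightarrow\; H\umF_2 \wedge X,$$
which on $\pi^{C_2}_\star$ realizes the $H_\star$-linear comparison map $\bigoplus_i H_\star \to H_\star(X)$ sending the $i$-th generator to $b_i$. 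Thus the lemma reduces to showing $\tilde\varphi$ is an equivalence of $C_2$-spectra.

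By the isotropy separation principle, a map of $C_2$-spectra is an equivalence if and only if both its underlying spectrum and its geometric $C_2$-fixed points are equivalences. Underlying, $\tilde\varphi^e$ realizes on $\pi_*$ the map sending the $i$-th generator to $\Phi^e(b_i)$ (equivalently, one checks $\tilde\varphi \wedge Ca$ using the identification $\pi^{C_2}_V(H\umF_2 \wedge Y \wedge Ca) \cong H_{|V|}(Y^e)$ built into the definition of $\Phi^e$), and this is an isomorphism by hypothesis (1); hence $\tilde\varphi^e$ is an equivalence. On geometric fixed points I use the splitting $(H\umF_2)^{\Phi C_2} \simeq \bigvee_{j\geq 0}\Sigma^j H\mF_2$ with coefficient ring the polynomial algebra $\mF_2[\tau]$, $\tau$ detecting $a^{-1}u$. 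Then $\tilde\varphi^{\Phi C_2}$ is a map of free $\mF_2[\tau]$-module spectra whose reduction modulo $(\tau)$ recovers $\bigoplus_i \mF_2 \to H_*(X^{\Phi C_2})$, $e_i \mapsto \Phi^{C_2}(b_i)$; this is an isomorphism by hypothesis (2). Graded Nakayama, valid since both sides are bounded-below free $\mF_2[\tau]$-modules, upgrades the mod-$\tau$ isomorphism to the statement that $\tilde\varphi^{\Phi C_2}$ is itself an equivalence.

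Combining the two checks, $\tilde\varphi$ is an equivalence of $C_2$-spectra; applying $\pi^{C_2}_\star$ yields the desired isomorphism $\bigoplus_i \Sigma^{V_i} H_\star \cong H_\star(X)$, exhibiting $\{b_i\}$ as a free $H_\star$-basis. The main obstacle is the Nakayama upgrade in the geometric fixed point step: hypothesis (2) detects $\Phi^{C_2}(b_i)$ only modulo the ideal $(a^{-1}u)$, whereas equivalence of $\tilde\varphi^{\Phi C_2}$ requires information at every power of $\tau$, so one must be careful to verify that source and target of $\tilde\varphi^{\Phi C_2}$ are both bounded-below and free over $\mF_2[\tau]$. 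Everything else is a direct unpacking of the spectrum-level definitions of $\Phi^e$, $\Phi^{C_2}$, and the standard isotropy separation for $C_2$-spectra.
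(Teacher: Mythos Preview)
The paper does not supply its own proof of this lemma; it is quoted verbatim from Behrens--Wilson \cite{BehrensWilson2018} (their Lemma~2.8) as a black-box computational input. Your argument is essentially the one given there: realize the $b_i$ as an $H\umF_2$-module map $\bigvee_i \Sigma^{V_i} H\umF_2 \to H\umF_2 \wedge X$ and verify it is an equivalence via isotropy separation, using hypothesis~(1) on underlying spectra and the splitting of $(H\umF_2)^{\Phi C_2}$ together with hypothesis~(2) on geometric fixed points.

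Your flagged concern about the Nakayama step is the one real subtlety, and you have identified it correctly. The cofiber $C$ of $\tilde\varphi^{\Phi C_2}$ satisfies $C/\tau \simeq 0$, hence $\tau$ acts invertibly on $\pi_*C$; to conclude $C\simeq 0$ one needs $\pi_*C$ bounded below. In every application in this paper the $b_i$ lie in actual (non-virtual) representation degrees and $X$ is the suspension spectrum of a $C_2$-space, so both source and target of $\tilde\varphi^{\Phi C_2}$ are connective and the argument goes through. For arbitrary $X\in Sp^{C_2}$ some such hypothesis is genuinely needed, and the lemma as literally stated is slightly imprecise on this point.
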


We use the following notation for $H^\star K_\sigma.$

\begin{theorem}[Hu-Kri\cite{HuKriz2001}]
	$H^\star (\mR P^\infty_{tw}) = H^\star(pt)[\alpha,\beta]/(\alpha^2 = a \alpha + u \beta)$ where $\vert \alpha \vert = \sigma,$ $\vert \beta \vert = \rho,$ $\vert a \vert = \sigma,$ and $ \vert u \vert = 1 - \sigma.$ 
\end{theorem}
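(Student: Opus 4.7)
The plan is to compute $H^\star(\mathbb{R}P^\infty_{tw})$ by combining a $C_2$-CW analysis of $K_\sigma$ with the ring structure coming from the infinite-loop space structure on $K_\sigma$. The space $\mathbb{R}P^\infty_{tw}$ admits an equivariant cell filtration whose successive cofibers are representation spheres $S^{k\rho}$ and $S^{k\rho+\sigma}$ for $k\geq 0$; this is realized concretely as a colimit of finite equivariant projective spaces, giving the equivariant analogue of the familiar cell structure on $\mathbb{R}P^\infty$.

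First I would extract the additive $H^\star(pt)$-module structure. Using $H^\star(S^V) \cong H^{\star - V}(pt)$ together with the cellular filtration, a straightforward induction along cofiber sequences shows that the connecting maps are trivial in all available bidegrees (the incoming and outgoing shifted cones of $H^\star(pt)$ sit in disjoint regions of the $(p,q)$-plane), so $H^\star(\mathbb{R}P^\infty_{tw})$ is a free $H^\star(pt)$-module on generators in degrees $0,\sigma,\rho,\rho+\sigma,2\rho,\ldots$.

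Next I would identify multiplicative generators. Let $\alpha \in H^\sigma$ and $\beta \in H^\rho$ denote the duals of the bottom nontrivial cells in those representation-dimensions. The $\Omega$-spectrum structure on $H\umF_2$ endows $H^\star$ with a graded-commutative ring structure, and matching the monomial set $\{\alpha^\epsilon \beta^k\}_{\epsilon \in \{0,1\},\, k \geq 0}$ against the cell basis shows that $\alpha$ and $\beta$ generate, with a single relation forced in degree $2\sigma$. The only $H^\star(pt)$-linear combinations of the relevant free-module basis elements in that degree are $\alpha^2$, $a\alpha$, $u\beta$, and $a^2$, so the relation must take the form $\alpha^2 = c_1 \, a\alpha + c_2 \, u\beta + c_3 \, a^2$ for some $c_1, c_2, c_3 \in \mathbb{F}_2$.

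The main obstacle is pinning down the coefficients, which I would handle by comparison with two natural restrictions. Passing to the underlying nonequivariant cohomology (via $\Phi^e$), one has $\alpha \mapsto x$, $\beta \mapsto x^2$, $a \mapsto 0$, and $u$ becomes a unit, so the candidate relation restricts to $x^2 = c_2\, x^2$ in $\mathbb{F}_2[x] = H^\ast(\mathbb{R}P^\infty;\mathbb{F}_2)$, which forces $c_2 = 1$. Passing to geometric fixed points (via $\Phi^{C_2}$, which inverts $a$) maps to the cohomology of the fixed-point set; a direct computation there, together with the requirement that $\alpha^2 - a\alpha - u\beta$ vanish after inverting $a$, pins down $c_1 = 1$ and $c_3 = 0$. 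Combining the two restrictions recovers the Hu-Kriz relation $\alpha^2 = a\alpha + u\beta$.
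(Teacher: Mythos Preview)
The paper does not give its own proof of this statement: it is quoted as a theorem of Hu--Kriz and used as input. What the paper does establish independently is the dual homology calculation $H_\star K_\sigma \cong E[e_\sigma,\bar\alpha_{(0)},\bar\alpha_{(1)},\ldots]$, via the twisted bar spectral sequence for $B^\sigma\mathbb{F}_2$ (Example~\ref{twistbssExs}). There the argument is that differentials $d^r$ with $r>1$ vanish because filtration degree equals topological degree, while $d^1$ vanishes because a nonzero $d^1$ would, upon passage to the underlying space, kill a known generator of $H_*(\mathbb{R}P^\infty;\mathbb{F}_2)$. So the comparison you can make is between your cellular/restriction approach and the paper's spectral-sequence/comparison-to-underlying approach to the homology side.

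Your overall strategy is sound, but the freeness step contains a genuine gap. The assertion that the shifted cones of $H^\star(pt)$ attached to successive cells ``sit in disjoint regions of the $(p,q)$-plane'' is false: consecutive cells differ by $\sigma$ or by $1$, and the relevant connecting homomorphism lands in $H^{1-\sigma}(pt)=\mathbb{F}_2\{u\}$ or $H^{0}(pt)=\mathbb{F}_2$, both of which are nonzero. So there is no purely bidegree obstruction to a nontrivial differential. The correct argument---and this is exactly what the paper uses in Example~\ref{twistbssExs}---is the comparison to the underlying nonequivariant cohomology: any nonzero connecting map would force a relation contradicting the known structure of $H^*(\mathbb{R}P^\infty;\mathbb{F}_2)$. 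Once freeness is established by that route, your determination of the relation via $\Phi^e$ and $\Phi^{C_2}$ is the right idea, though the geometric-fixed-point step would benefit from actually identifying $(\mathbb{R}P^\infty_{tw})^{C_2}\simeq K_0\times K_1$ (Caruso's result, recalled in \S\ref{subsec:fixedptspaces}) and tracking $\alpha,\beta$ through that identification rather than asserting the outcome.
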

Since this cohomology is free, the homology $H_\star K_\sigma$ immediately follows. In our notation we have elements
\begin{align*}
	e_\sigma \in H_\sigma K_\sigma, && \bar{\alpha_i} \in H_{\rho i} K_\sigma, && (i \geq 0).
\end{align*}
The generators are 
\begin{align*}
	e_\sigma, && \bar{\alpha}_{(i)} = \bar{\alpha}_{2^i} && (i \geq 0)
\end{align*}
with coproduct  
\begin{align*}
	\psi (e_\sigma) & = 1 \otimes e_\sigma + e_\sigma \otimes 1 + a (e_\sigma \otimes e_\sigma) \\
	\psi( \bar{\alpha}_{n} ) & = \sum_{i = 0}^n \bar{\alpha}_{n - i} \otimes \bar{\alpha}_i + \sum_{i = 0}^{n-1} u (e_\sigma \bar{\alpha}_{n - 1 - i} \otimes e_\sigma \bar{\alpha}_{i}). \\
\end{align*}
and ring structure $H_\star K_\sigma \simeq E[e_\sigma, \bar{\alpha}_{(i)}]$ which can be deduced from the twisted bar spectral sequence computing $H_\star B^\sigma \mathbb{F}_2 \cong H_\star \mR P_{tw}^\infty.$

We also require notation for $H_\star K(\umZ, \rho).$ This can be deduced by applying the $RO(C_2)$-graded bar spectral sequence to $S^\sigma.$ Let 
$$\bar{\beta}_{i} \in H_{\rho i} K(\umZ, \rho) \qquad (i \geq 0).$$
The generators are 
$$
\bar{\beta}_{(i)} = \bar{\beta}_{2^i} \qquad (i \geq 0) $$
with coproduct  
$$ \psi( \bar{\beta}_{n} ) = \sum_{i = 0}^n \bar{\beta}_{n - i} \otimes \bar{\beta}_i$$
and ring structure
$$H_\star K(\umZ, \rho) \simeq E[\beta_{(i)}].$$

\subsection{The fixed point spaces of $C_2$-equivariant Eilenberg-MacLane spaces} \label{subsec:fixedptspaces}

It is useful to understand the $C_2$ fixed points of the $C_2$-equivariant Eilenberg-MacLane spaces $K_V$ in applications of the Behrens-Wilson computational lemma. We state a relevant proposition due to Caruso. 

\begin{proposition}[Caruso \cite{Caruso1999}] Let $G = C_p$ and $V$ be an $n$-dimensional fixed point free virtual representation of $G$ with $n > 0$ and $m$ an integer. Then 
	$$K(\umF_p, m + V)^{C_p} \simeq K(\mF_p, m) \times \cdots \times K(\mF_p, m +n) .$$
\end{proposition}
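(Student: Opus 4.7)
The plan is to lift the statement to the category of spectra: the $C_p$-fixed-point spectrum of $H\umF_p \wedge S^{m+V}$ is naturally an $H\mF_p$-module, hence splits as a wedge of shifted $H\mF_p$'s dictated by its homotopy groups, and $\Omega^\infty$ then yields the claimed product of Eilenberg-MacLane spaces.

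First, identify $(H\umF_p)^{C_p} \simeq H\mF_p$. Direct inspection of the integer-graded homotopy groups $\pi^{C_p}_k(H\umF_p)$ shows they equal $\mF_p$ in degree $0$ and vanish otherwise. For $p=2$ this follows from Figure \ref{fig:HomPt}, since the axis $q=0$ meets the two cones of $\pi^{C_2}_\star(H\umF_2)$ only at the origin. Hence $(H\umF_p)^{C_p}$ is the Eilenberg-MacLane spectrum $H\mF_p$.

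Next, promote to a module splitting. Fixed points are lax symmetric monoidal, so the $H\umF_p$-module structure on $H\umF_p \wedge S^{m+V}$ descends to an $H\mF_p$-module structure on $(H\umF_p \wedge S^{m+V})^{C_p}$. Since $H\mF_p$ is a field object in spectra, every $H\mF_p$-module $M$ splits as
$$M \simeq \bigvee_{k} \Sigma^k (H\mF_p)^{\oplus \dim_{\mF_p} \pi_k M},$$
so the remaining task is to compute the integer-graded homotopy groups of $(H\umF_p \wedge S^{m+V})^{C_p}$.

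For this, note that $S^k$ carries the trivial action, so
$$\pi_k\bigl((H\umF_p \wedge S^{m+V})^{C_p}\bigr) = \pi^{C_p}_{k - m - V}(H\umF_p).$$
When $p=2$ and $V \cong n\sigma$, the virtual representation $(k-m)-n\sigma$ has bidegree $(k-m-n,\,-n)$. Reading off Figure \ref{fig:HomPt}, this lies in the bottom cone $\{p \leq 0,\,q \leq p\}$ precisely when $0 \leq k-m \leq n$, and outside both cones otherwise, so the homotopy groups equal $\mF_p$ for $k \in \{m, m+1, \ldots, m+n\}$ and vanish elsewhere. Combined with the module splitting, this gives $(H\umF_p \wedge S^{m+V})^{C_p} \simeq \bigvee_{i=0}^n \Sigma^{m+i} H\mF_p$, and applying $\Omega^\infty$ converts this finite wedge into the stated product. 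For odd $p$, the same argument applies after decomposing $V$ into $2$-dimensional rotation summands and invoking the analogous two-cone shape of $\pi^{C_p}_\star(H\umF_p)$. The one genuinely delicate step is this bidegree bookkeeping, since the fixed-point-free hypothesis on $V$ together with $n>0$ is precisely what keeps the class $(k-m)-V$ away from the top cone $\{p \geq 0,\,q \geq p+2\}$; once this is verified, the module structure and the field property of $H\mF_p$ force the desired splitting automatically.
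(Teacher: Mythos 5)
The paper offers no proof of this proposition --- it is quoted from \cite{Caruso1999} --- so there is nothing in the text to compare against. Your stable route (genuine fixed points are lax monoidal, $(H\umF_p)^{C_p}\simeq H\mF_p$, every $H\mF_p$-module splits into Eilenberg--MacLane pieces, and $\Omega^\infty$ commutes with categorical fixed points of genuine $G$-spectra) is a legitimate self-contained argument, and at $p=2$ it is complete: the bookkeeping against Figure \ref{fig:HomPt} is exactly right. The degree $(k-m)-n\sigma$ has bidegree $(k-m-n,-n)$, the bottom cone $\{p\le 0,\ q\le p\}$ contains it precisely for $m\le k\le m+n$ with a single $\mF_2$ there (the monomial $a^{\,n-(k-m)}u^{\,k-m}$), and $q=-n<0$ excludes the top cone. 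Since the paper only ever invokes the $C_2$ case, this covers what is actually needed.

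The genuine gap is the last sentence, on odd primes. The positive cone of $\pi_\star^{C_p}H\umF_p$ for $p$ odd is \emph{not} simply the polynomial algebra on the Euler and orientation classes $a_{\lambda}$, $u_{\lambda}$ with $|a_\lambda|=-\lambda$, $|u_\lambda|=2-\lambda$: monomials in those classes all sit in even topological degree, so ``the analogous two-cone shape,'' read naively, would give $\pi^{C_p}_{k-m-V}H\umF_p\neq 0$ only for $k-m$ even and you would lose exactly the odd factors $K(\mF_p,m+1),K(\mF_p,m+3),\dots$ of the asserted product. With constant $\umF_p$ coefficients there are additional classes in odd topological degree (already for $V=\lambda$, $n=2$, one has $\widetilde{H}^{C_p}_1(S^\lambda;\umF_p)\cong\mF_p$), and these are precisely what produce the odd factors. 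The fix is to bypass the cone picture at odd primes and compute $\pi_k\bigl((\Sigma^{m+V}H\umF_p)^{C_p}\bigr)=\widetilde{H}^{C_p}_{k-m}(S^V;\umF_p)$ directly from a $C_p$-CW structure on $S^V$ with one fixed $0$-cell and one free orbit of cells in each dimension $1,\dots,n$ (available because $V$ is fixed point free): at level $C_p/C_p$ the reduced cellular complex is a copy of $\mF_p$ in each degree $0,\dots,n$, and every differential vanishes because the transfer on $\umF_p$ is multiplication by $p=0$ and the maps $1-\gamma$ and the norm both die on (co)invariants mod $p$. That yields one $\mF_p$ in each degree $m,\dots,m+n$, after which your module-splitting argument goes through verbatim at all primes. (One further caveat: for genuinely virtual fixed-point-free $V$ at odd $p$ the representation sphere argument needs modification, and the negative cone can in principle enter; your stable framework is the right one for that generality, but the homotopy computation again requires more than the two visible cones.)
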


\subsection{Notation for the underlying nonequivariant homology of $K_V^{C_2}$} \label{subsec:underlyinghom}

To use the Behrens-Wilson lemma, we also need to understand the homology of the fixed point spaces. Applying Theorem \ref{Thm:RWp2} to the nonequivariant homology of $(K_{n \sigma})^{C_2}$ gives
$$H_* (K_{n \sigma}^{C_2}) \simeq E[e_0, a_{(i_1)}, a_{(i_1)} \circ a_{(i_2)}, \cdots, a_{(i_1)} \circ \cdots \circ a_{(i_n)} ]$$
where $0 \leq i_1 \leq i_2 \leq \cdots \leq i_n,$ $\vert e_0 \vert = 0,$ where $\vert a_{(i)} \vert = 2^i.$  

\section{Bar and twisted bar constructions}\label{sec:barConsts}

A first task in implementing the Ravenel-Wilson Hopf ring approach is to generalize the bar spectral sequence to the $C_2$-equivariant case. In the classical story, the bar spectral sequence is used to inductively compute the homology of $K_n \simeq B K_{n - 1} $ from $H_* K_{n-1}.$ In the $C_2$-equivariant setting, our spaces $K_V$ are bi-graded on the trivial and sign representations of $C_2$. Due to this new grading, we should now additionally compute the homology of $K_{V + \sigma}$ inductively from $H_\star K_{V}.$ In order to do so, we need a good model of $\sigma$-delooping. We begin by reviewing the classical bar construction which is a trivial representation delooping functor.  

\begin{construction}{\textbf{(The Classical Bar Construction)}} \newline
	For a topological monoid $A,$ the pointed space $B A$ is defined as a quotient
	$$B A = {\coprod_{n} \Delta ^n \times A ^{\times n} / \sim}$$
	where the relation $\sim$ is generated by 
	\begin{itemize}
		\item[(1)] $$(t_1, \cdots t_n, a_1, \cdots , a_n) \sim (t_1, \cdots, \hat{t_i}, \cdots, t_n, a_1, \cdots, \hat{a_i}, (a_i a_{i + 1}) \cdots, a_n)$$
		if $t_i = t_{i +1}$ or $x_i = *,$ 
		
		\item[(2)]for $i = n,$ delete the last coordinate if $t_n = 1$ or $a_n = *,$
		
		for $i = 0,$ delete the first coordinate if $t_0 = -1$ or $a_0 = *,$
		
		and $ \Delta^n$ denotes the topological simplex 
		$$\Delta^n = \{(t_1, t_2, \cdots, t_n) \in \mathbb{R}^{n} | -1 \leq t_1 \leq \cdots \leq t_n \leq 1 \}.$$
	\end{itemize}
\end{construction}

\begin{remark}
	We use the slightly nonstandard topological $n$-simplex 
	$$\Delta^n = \{(t_1, t_2, \cdots, t_n) \in \mathbb{R}^{n} | -1 \leq t_1 \leq \cdots \leq t_n \leq 1 \}$$ so that when we introduce a $C_2$ action, the simplex rotates around the origin. This makes writing down a model for the $H$-space structure on the $C_2$-equivariant Eilenberg-MacLane spaces $K_V$ more straightforward.
\end{remark}

Given a commutative monoid $A,$ we observe that $BA$ is also a commutative monoid via the pairing
$$* : B X \times B X \to B X$$ defined by 
\begin{align*}
	(t_1, \cdots, t_n,\, x_0, \cdots, x_n) *_\sigma (t_{n+1}, \cdots ,&  t_{n+m}, \, x_{n +1}, \cdots , x_{n + m}) \\
	& = (t_{\tau(1)}, \cdots, t_{\tau (n +k)} , x_{\tau(1)}, \cdots, x_{\tau(n + k)}),
\end{align*}
where $\tau$ is any element of the symmetric group on $n + k$ letters such
that $t_{\tau(i)} \leq t_{\tau(i + 1)}.$ This pairing was first described by Milgram \cite{Milgram1967}. 

\begin{definition}[\cite{Liu2020}]
	A $C_2$-space $A$ is a twisted monoid if it is a topological monoid in the non-equivariant sense with the product satisfying $\gamma (xy) = \gamma(y) \gamma(x)$ where $C_2 \simeq \langle \gamma \rangle.$ 
\end{definition}

\begin{construction}[\cite{Liu2020}]
	For any twisted monoid $A,$ construct $B_*^\sigma A$ in the same way as the non-equivariant bar construction, that is such that $B_n^\sigma A = \Delta^n \times A^n.$ However, define a $C_2$-action on $A^n$ by 
	$$\gamma (a_1, a_2, \cdots, a_n) = (\gamma a_n, \gamma a_{n-1}, \cdots, \gamma a_1).$$
	Then the $C_2$-actions commute with the face and degeneracy maps as $\gamma \circ s_i = s_{n-i} \circ \gamma$ and $\gamma \circ d_i = d_{n - i} \circ \gamma.$ Further define the $C_2$-action on each 
	$$\Delta^n = \{(t_1, t_2, \cdots, t_n) \in \mathbb{R}^{t + 1} | -1 \leq t_1 \leq \cdots \leq t_n \leq 1 \}.$$
	by $\gamma(t_1, t_2, \cdots, t_n) = (-t_n, -t_{n-1}, \cdots, -t_1) ).$
	Then define $B^\sigma A$ to be the geometric realization
	$$\displaystyle{\coprod  \Delta^n \times A^n / \sim}.$$
\end{construction} 

\begin{example}
	The space $B^\sigma K_0 \simeq \mR P^\infty_{tw}$ is the space of lines through the origin with conjugate action.   
\end{example}

We can inductively define an $H$-space pairing on $B^{l} B^{k \sigma} \mathbb{F}_2,$ similar to the one given by Milgram in the non-equivariant case. Define a mapping 
$$*_\sigma: B^\sigma X \times B^\sigma X \to B^\sigma X$$
by 
\begin{align*}
	(t_0, \cdots, t_n, \, x_0, \cdots, x_n) *_\sigma (t_{n+1}, \cdots ,&  t_{n+m}, \, x_{n +1}, \cdots , x_{n + m}) \\
	& = (t_{\tau(1)}, \cdots, t_{\tau (n +k)} , x_{\tau(1)}, \cdots, x_{\tau(n + k)}),
\end{align*}
where $\tau$ is any element of the symmetric group on $n + k$ letters such
that $t_{\tau(i)} \leq t_{\tau(i + 1)}.$ 
Then $*_\sigma$ is well defined, continuous, and $C_2$-equivariant. Going forward, we suppress the $\sigma$ notation in $*_\sigma,$ using only $*$ to denote the $H$-space pairing. The relevant $C_2$-action is deduced from context.  

\begin{proposition}[Liu \cite{Liu2020}]
	For any commutative monoid $A$ in the category of based $C_2$-spaces, the $V$-degree bar construction $B^V A$ is defined by applying the ordinary bar construction $l$ times and the twisted bar construction $m$ times for $V = l + m \sigma.$ There exists a natural map $A \to \Omega^V B^V A.$ When $A$ is $C_2$-connected, this map is a $C_2$-equivalence. 
\end{proposition}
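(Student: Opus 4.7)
The plan is to induct on the dimension of $V$ so that it suffices to establish the two base cases $V = 1$ and $V = \sigma$. Each of $B$ and $B^\sigma$ preserves the structure of a commutative monoid in based $C_2$-spaces via the equivariant Milgram pairing $*$ described just before the proposition, so the iterated bar construction is well-defined and admits a natural unit map $\eta_V \colon A \to \Omega^V B^V A$. Using $\Omega^{V+W} = \Omega^V \Omega^W$ and naturality of the unit, showing $\eta_V$ is a $C_2$-equivalence reduces to the two assertions $A \simeq \Omega B A$ and $A \simeq \Omega^\sigma B^\sigma A$, applied one delooping at a time.

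The ordinary case $V = 1$ is essentially classical: $C_2$ acts trivially on $\Delta^n$ and diagonally on $A^n$, so $(BA)^{C_2} = B(A^{C_2})$, and since $S^1$ carries the trivial action we also have $(\Omega BA)^{C_2} = \Omega B(A^{C_2})$. The map $\eta_1$ restricts on fixed points to the classical unit for the $C_2$-fixed commutative monoid $A^{C_2}$. Because $A$ is $C_2$-connected, both $A$ and $A^{C_2}$ are connected, and the classical theorem of Milgram--May yields the underlying and fixed-point equivalences simultaneously.

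The twisted case $V = \sigma$ is the heart of the argument. Underlying, $B^\sigma A$ coincides with $BA$, so the underlying map $A \to \Omega B A$ is again classical. On fixed points, a $C_2$-equivariant based map $S^\sigma \to X$ is determined by the image of the interior fixed point $0 \in S^\sigma$ (which must lie in $X^{C_2}$) together with a path in $X$ from that image to the basepoint along the positive half-axis, and consequently
$$(\Omega^\sigma X)^{C_2} \;\simeq\; \mathrm{hofib}\bigl( X^{C_2} \hookrightarrow X \bigr).$$
It therefore suffices to produce a homotopy fiber sequence $A^{C_2} \to (B^\sigma A)^{C_2} \to B^\sigma A$, with the first map the fixed-point restriction of $\eta_\sigma$.

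To produce this fiber sequence I would imitate Milgram's classical universal principal bundle $A \to EA \to BA$ by introducing a $C_2$-equivariant two-sided twisted bar construction $P^\sigma A = B^\sigma(\ast, A, A)$, verifying that $P^\sigma A$ is $C_2$-contractible via an explicit contracting homotopy built from the right $A$-action, and that the projection $P^\sigma A \to B^\sigma A$ is an equivariant quasi-fibration with fiber $A$. The hard part will be checking the quasi-fibration property on fixed points: this requires using the palindromic constraints $t_i = -t_{n+1-i}$ and $a_i = \gamma a_{n+1-i}$ on a fixed simplex $(t_1,\ldots,t_n;\, a_1,\ldots,a_n)$ to identify $(B^\sigma A)^{C_2}$ with a mixed two-sided bar construction built from $A$ and $A^{C_2}$, and then to run a Segal-style local triviality argument, where $C_2$-connectedness enters in order to handle the middle coordinate $a_{k+1} \in A^{C_2}$ of odd-dimensional simplices. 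Once the quasi-fibration is in hand, applying $(-)^{C_2}$ and comparing with the homotopy-fiber description of $(\Omega^\sigma B^\sigma A)^{C_2}$ above produces the desired fixed-point equivalence and completes the proof.
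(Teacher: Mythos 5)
The paper does not actually prove this proposition; it is quoted from Liu \cite{Liu2020} as background, so there is no internal argument to compare yours against. On its own terms, your outline is the standard (and, I believe, essentially Liu's) route: reduce to the one-step deloopings, handle $V=1$ by noting that $B$ and $\Omega$ commute with $(-)^{C_2}$ and invoking the classical recognition theorem for the underlying space and the fixed points separately, and handle $V=\sigma$ via the identification $(\Omega^\sigma X)^{C_2}\simeq \mathrm{hofib}(X^{C_2}\hookrightarrow X)$ together with an identification of $(B^\sigma A)^{C_2}$ as a two-sided bar construction. Both key identifications are correct, and the palindromic description of fixed simplices you invoke is exactly the one recorded in Section 5.2 of this paper.

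That said, the step you defer is the crux, and there is a specific subtlety your plan glosses over. When you identify the fixed $(2k+1)$-simplices $(t_1,\dots,t_k,0,-t_k,\dots,-t_1;\,a_1,\dots,a_k,a,\gamma a_k,\dots,\gamma a_1)$ with the $k$-simplices of $B(\ast,A,A^{C_2})$ (with $A$ acting on $A^{C_2}$ by the norm-type formula $b\cdot x=b\,x\,\gamma(b)$), the inclusion $(B^\sigma A)^{C_2}\hookrightarrow B^\sigma A$ \emph{doubles} simplicial degree, so it is not literally the bar-construction projection $B(\ast,A,A^{C_2})\to B(\ast,A,\ast)$. You need an extra comparison (an edgewise-subdivision argument, or a direct check that the two maps to $BA$ agree up to homotopy) before May's quasi-fibration theorem for two-sided bar constructions delivers the fiber sequence with fiber $A^{C_2}$, and you should make explicit that $C_2$-connectedness is what makes $A$ and $A^{C_2}$ grouplike, which is precisely where the quasi-fibration criterion uses the hypothesis (plus the usual well-pointedness and properness of the bar filtration). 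None of this looks fatal, but as written the twisted base case is a plan rather than a proof.
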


\section{\texorpdfstring{Multiplicative structures on $C_2$-equivariant Eilenberg-MacLane spaces}{Multiplicative structures on equivariant Eilenberg-MacLane spaces}}\label{sec:multStrs}
	
	We describe multiplicative structures on $C_2$-equivariant Eilenberg-MacLane spaces, extending Ravenel and Wilson's description of similar structures on classical non-equivariant Eilenberg-MacLane spaces. We use our understanding of these structures to compute the $RO(C_2)$-graded homology of many $C_2$-equivariant Eilenberg-MacLane spaces $K_V$ associated to the constant Mackey functor $\mF_2.$ In particular, we compute the $RO(C_2)$-graded homology of all $C_2$-equivariant Eilenberg-MacLane spaces $K_{*\sigma}$ and $K_{\sigma + *}.$ 
	
	\subsection{Multiplicative structures on $K_V$} 

The $RO(C_2)$-graded cup product is induced by a map 
\begin{equation} \label{eq:5}
	\circ = \circ_{V,W} : K_V \wedge K_W \to K_{V + W}.
\end{equation}

We will construct $\circ_{V,W}$ explicitly within the framework of trivial and $\sigma$ - representation delooping given by $B$ and $B^\sigma$. We will also discuss how $\circ_{V,W}$ descends to a product on the fixed points. 

Given a real $C_2$ representation $V \cong l + k \sigma,$ the Eilenberg-MacLane space $K_V$ is a $V$-fold delooping of $\mathbb{F}_2$ and therefore can be constructed iteratively by taking $B^{l} B^{k \sigma} \mathbb{F}_2$ where $l,k$ are non-negative integers. The following construction  extends exposition by Ravenel and Wilson in their computation of the Morava $K$ theory of Eilenberg-MacLane spaces \cite{RavanelWilson1980}.

We construct the map \refeq{eq:5} inductively on $V$. Assuming $\circ_{V,W}$ has been defined, we define $\circ_{V+1, W}$ and $\circ_{V + \sigma, W}$ by replacing $K_{V + 1},$ $K_{V + W + 1},$ and $K_{V + \sigma},$ $K_{V + W + \sigma}$ with their bar and twisted bar constructions respectively. In both cases this is denoted as follows. There is a notationally suppressed $C_2$-action each case.

\begin{equation} \label{eq:6}
	\{ \coprod_{n} \Delta^n \times K_V^n / \sim \} \wedge K_W \to \{ \coprod_{n} \Delta^n \times K_{V + W} / \sim \}
\end{equation}

Let $t \in \Delta^n,$ $x = (x_0, \cdots, x_n) \in K_V,$ and $y \in K_W.$ The image of $x_i \wedge y \in K_V \wedge K_W$ under the map (\refeq{eq:5}) is denoted $x_i \circ y.$ We use the notation $x \circ y$ to mean $(x_0 \circ y, \cdots, x_n \circ y).$ Define (\refeq{eq:6}) by 

\begin{equation} \label{eq:7}
	\{(t, x) \} \circ y = \{(t, x \circ y)\}.
\end{equation}

\begin{theorem} \label{thm:5.1}
	The above construction is well defined and gives the cup product pairings
	$\circ: K_{V + 1} \wedge K_W \to K_{V + W + 1}$ and $\circ: K_{V + \sigma} \wedge K_W \to K_{V + W + \sigma}$. 
\end{theorem}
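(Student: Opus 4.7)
The plan is to prove both pairings inductively on the representation $V$. I assume $\circ_{V,W}: K_V \wedge K_W \to K_{V+W}$ has been constructed iteratively and is pointed in each variable, strictly distributive over the $H$-space multiplication $*$ from the (twisted) bar construction, $C_2$-equivariant, and represents the cup product in cohomology. The base case $V=0$ is the unital action of $\umF_2$ on $K_W$. Given this, I verify that formula (\ref{eq:7}) descends to a well-defined continuous map on $K_{V+1} \wedge K_W$ (resp.\ $K_{V+\sigma} \wedge K_W$) inheriting all these properties at the next level.

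For well-definedness, I would check that the assignment $\{(t,x)\} \circ y = \{(t, x \circ y)\}$ respects the generating relations of $\sim$ in the source and target (twisted) bar constructions as well as the smash product relation. The collapsing relation $t_i = t_{i+1}$, which replaces $(x_i, x_{i+1})$ by $x_i * x_{i+1}$, requires $(x_i * x_{i+1}) \circ y = (x_i \circ y) * (x_{i+1} \circ y)$, i.e.\ strict distributivity of $\circ_{V,W}$ in the first variable. The basepoint collapses $x_i = *$ and $y = *$ reduce to $* \circ y = *$ and $x \circ * = *$, both following from pointedness of $\circ_{V,W}$. The boundary relations at $t_1 = -1$ and $t_n = 1$ are handled analogously, and continuity is immediate. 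For the twisted case, the $C_2$-action $\gamma(t_1, \ldots, t_n, x_1, \ldots, x_n) = (-t_n, \ldots, -t_1, \gamma x_n, \ldots, \gamma x_1)$ intertwines with the formula precisely when $(\gamma x_i) \circ (\gamma y) = \gamma(x_i \circ y)$, which is the $C_2$-equivariance of $\circ_{V,W}$.

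For the identification with the cup product, note that for each fixed $y \in K_W$ the map $\circ_{V,W}(-, y): K_V \to K_{V+W}$ is a map of (twisted) $H$-spaces by distributivity, so applying $B$ (resp.\ $B^\sigma$) in the first variable yields a map $BK_V \wedge K_W \to BK_{V+W}$ that by construction agrees with formula (\ref{eq:7}). Under the bar--loop adjunction and the equivalence $K_V \simeq \Omega K_{V+1}$ (resp.\ $K_V \simeq \Omega^\sigma K_{V+\sigma}$) supplied by Liu's proposition, this corresponds to looping down $\circ_{V+1,W}$ and recovering $\circ_{V,W}$. Since the abstract cup product at level $V+1$ also loops down to the cup product at level $V$, the inductive identification of $\circ_{V,W}$ with the cup product together with the universal property of $K_{V+1}$ (resp.\ $K_{V+\sigma}$) as the canonical delooping forces the two maps to agree up to homotopy, closing the induction. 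The main obstacle is tracing the explicit bar-level formula through the adjunction unit $K_V \to \Omega^\sigma B^\sigma K_V$ of Liu's proposition in the twisted case; an alternative closer in spirit to Ravenel--Wilson is to pull back the fundamental class $\iota_{V+W+1}$ along the constructed map and compute it directly as $\iota_{V+1} \smile \iota_W$ using the bar-construction model of the fundamental class.
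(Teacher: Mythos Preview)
Your proposal is correct and follows the same inductive skeleton as the paper: induction on $V$, base case given by the $\umF_2$-action on $K_W$, well-definedness reduced to distributivity of $\circ_{V,W}$ over $*$, and then identification with the cup product. The paper's argument for well-definedness is exactly the relation check you outline (it writes out only the $t_q = t_{q+1}$ case and leaves the rest to the reader), and the paper also carries the distributivity identity $(z_1 * z_2)\circ y = (z_1\circ y)*(z_2\circ y)$ forward to the new level by an explicit Milgram-shuffle computation, which you obtain more conceptually by applying $B$ (resp.\ $B^\sigma$) to the $H$-space map $\circ_{V,W}(-,y)$.

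The one place you diverge is in the cup product identification. The paper does not go through the bar--loop adjunction or Liu's unit map; instead it uses the much more concrete observation that the $1$-skeleton $B_1 K_V \simeq S^1 \wedge K_V$ (resp.\ $B_1^\sigma K_V \simeq S^\sigma \wedge K_V$) makes the square
\[
\begin{tikzcd}
S^\sigma \wedge K_V \wedge K_W \arrow{r}\arrow{d} & S^\sigma \wedge K_{V+W} \arrow{d}\\
K_{V+\sigma} \wedge K_W \arrow{r} & K_{V+W+\sigma}
\end{tikzcd}
\]
commute on the nose, so $\circ_{V+\sigma,W}$ is literally a suspension of $\circ_{V,W}$ on the bottom cell. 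This is enough to propagate the cup product identification by induction, and it sidesteps exactly the ``main obstacle'' you flag about tracing through the adjunction unit. Your alternative suggestion of pulling back $\iota_{V+W+1}$ is closer in spirit to this; the paper's version is the cleanest realization of it.
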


\begin{lemma} \label{lem:6.2}
	The map $\circ: K_0 \times K_V \to K_V$ is given by $(q) \circ x = x^{*q}$ where $q \in \mathbb{F}_2.$ 
\end{lemma}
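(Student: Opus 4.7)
The plan is to prove the formula by induction on the representation $V$, using the iterative construction of $\circ$ from Theorem \ref{thm:5.1} together with the collapsing relations in the (twisted) bar construction.

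For the base case $V = 0$, the map $\circ : K_0 \wedge K_0 \to K_0$ is just the ring multiplication on $\mathbb{F}_2$, sending $(q, q') \mapsto qq'$. Since the $H$-space structure on $K_0 = \mathbb{F}_2$ is the additive group structure, $(q')^{*q}$ equals the basepoint $0$ when $q=0$ and equals $q'$ when $q=1$. Either way this agrees with $qq'$, verifying the formula.

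For the inductive step, I would use the symmetric counterpart of the construction \eqref{eq:7}, which extends $\circ : K_V \wedge K_W \to K_{V+W}$ to $\circ : K_V \wedge K_{W+1} \to K_{V+W+1}$ and $\circ : K_V \wedge K_{W+\sigma} \to K_{V+W+\sigma}$ via
$$x \circ \{(t, y_0, \ldots, y_n)\} = \{(t, x \circ y_0, \ldots, x \circ y_n)\}.$$
Assuming the formula $(q) \circ x = x^{*q}$ for all $x \in K_V$, take $y = \{(t, (y_0, \ldots, y_n))\}$ in $K_{V+1} = BK_V$ (or $K_{V+\sigma} = B^\sigma K_V$). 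Then by the inductive hypothesis
$$(q) \circ y = \{(t, (y_0^{*q}, \ldots, y_n^{*q}))\}.$$
When $q = 0$, each entry is the basepoint, and relation (1) of the (twisted) bar construction iteratively deletes basepoint entries until the whole expression collapses to the basepoint of $K_{V+1}$ (respectively $K_{V+\sigma}$); this coincides with $y^{*0}$. When $q = 1$, each $y_i^{*1} = y_i$, so $(q) \circ y = y = y^{*1}$. This completes the induction.

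I do not anticipate a real obstacle. The only point worth making explicit is that Theorem \ref{thm:5.1} is phrased as extending $\circ$ in the first factor while Lemma \ref{lem:6.2} concerns extension in the second; the cleanest fix is to invoke the evident symmetric analogue of \eqref{eq:7} (or, equivalently, the graded commutativity of the cup product pairing). The argument is otherwise purely formal and requires no delicate equivariant input, since $K_0 = \mathbb{F}_2$ carries the trivial $C_2$-action and the two flavors of bar construction collapse basepoint tuples in the same way.
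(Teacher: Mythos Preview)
Your argument is correct, but it takes a different route from the paper and subtly reorganizes the logical structure of the section. The paper's proof of Lemma~\ref{lem:6.2} is a one-line homotopical argument: the map $x \mapsto x^{*q}$ induces multiplication by $q$ on $\pi_V^{C_2} K_V \cong \mathbb{F}_2$, and since this is exactly what the cup product pairing restricted to $\{q\} \times K_V$ must do (and $K_V$ is an Eilenberg--MacLane space), the two maps agree. No induction, no bar construction.

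The reason the paper argues this way is that Lemma~\ref{lem:6.2} serves as the \emph{base case} for the inductive construction in Theorem~\ref{thm:5.1}, so it must be established before that construction is available. Your proof instead pushes the base case down to $V = W = 0$ and inducts on the second factor using the symmetric analogue of~\eqref{eq:7}. That is a legitimate reorganization, and you correctly flag the asymmetry; but note that in the paper's flow the map $\circ$ has not yet been built when this lemma is invoked, so your argument implicitly restructures the entire inductive definition rather than merely proving the lemma. The paper's homotopical check sidesteps this entirely at the cost of giving only an identification up to homotopy, which is all that is needed downstream. Your approach has the virtue of being a genuine point-set verification, which would matter if strict equality were required.
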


\begin{proof}
	This map multiplies $\pi_V^{C_2} K_V \simeq \mathbb{F}_2$ by $q$ which is what $\circ$ should do restricted to $(q) \times K_V \simeq K_V.$ 
\end{proof}

\begin{proof}{\textit{of Theorem \ref{thm:5.1}}}
	This proof is a direct extension of the nonequivariant argument in Ravenel-Wilson \cite{RavanelWilson1980}. We prove our result by induction on $i$ in the $\sigma$ direction noting that the result also holds and is similar in the trivial representation direction (that is we assume the statement holds for $V$, show for $V+\sigma$). Assume we have proved Theorem 1 for $K_V \wedge K_W \to K_{V + W}$ with Lemma 1 beginning the induction. We need our construction to satisfy:
	\begin{align*}
		(z_1 * z_2) \circ y = (z_1 \circ y ) * (z_2 \circ y).
	\end{align*}
	For $i = 0,$ $z_i = q_i \in \mathbb{F}_2 = K_0.$ So, 
	$$(q_1 * q_2) \circ y = (q_1 + q_2) \circ y = y^{q_1 + q_2} = y^{*q_1} * y^{*q_2} = (q_1 \circ y) * (q_2 \circ y).$$ For $i > 0,$
		\begin{align*}
			[z_1 * z_2] \circ y & = [(t,x)*(t_{n+1}, \cdots, t_{n+k}; x_{n+1}, \cdots, x_{n+k})] \circ y  \\
			& = (t_{\tau(1), \cdots, t_{\tau(n + k)}; x_{\tau(1)}, \cdots, x_{\tau(n + k)}}) \circ y  \\
			& = (t_{\tau(1)}, \cdots, t_{\tau{(n + k)}}; x_{\tau(1)} \circ y, \cdots, x_{\tau(n + k)} \circ y) \\
			& = (t; x \circ y) ) * (t_{n + 1}, \cdots, t_{n + k}; x_{n + 1} \circ y, \cdots, x_{n + k} \circ y) \\
			& = (z_1 \circ y) * (z_2 \circ y) 
		\end{align*}
	where the second line is due to the definition of $*,$ the third is due to the induction hypothesis and \eqref{eq:7}, and the fourth is due to the definition of $*.$
\end{proof}

We must show \refeq{eq:7} gives well defined maps $K_{V + 1} \wedge K_W \to K_{V + W + 1}$ and $K_{V + \sigma} \wedge K_W \to K_{V + W + \sigma}.$ The relations in the (twisted) bar construction make this the case. We show the main case, leaving the others to the reader. Assume $0 \leq q < n$ with $t_q = t_{q + 1}$ or $x_q = *.$ Then

\begin{align*}
	(t, x) \circ y & = (t, x \circ y) \\
	& \sim (t_1, \cdots, \hat{t_q}, \cdots, t_n; x_1 \circ y, \cdots , (x_q \circ y)*(x_{q + 1} \circ y), \cdots, x_n \circ y ) \\
	& = (t_1, \cdots, \hat{t_q}, \cdots, t_n; x_1 \circ y, \cdots, (x_q * x_{q + 1}) \circ y, \cdots, x_n \circ y) \\
	& = (t_1, \cdots, \hat{t_q}, \cdots, t_n ; x_1, \cdots, x_q * x_{q + 1}, \cdots , x_n ) \circ y\\
\end{align*}
which is the necessary relation. That this map factors through the smash product is straightforward to verify using induction.

The remaining task is to show that this is the cup product pairing map. This follows by induction from the observation that $\circ$ commutes with (signed) suspension on the first factor since $B_1 K_V \simeq S^1 \wedge K_V$ and $B_1^\sigma K_V \simeq S^\sigma \wedge K_V,$ and following diagrams commute.

\[
\begin{tikzcd}
	S^1 \wedge K_V \wedge K_W \arrow{r}{} \arrow[swap]{d}{} & S^1 \wedge K_{V + W} \arrow{d}{} \\
	K_{V + 1} \wedge K_W \arrow{r}{} & K_{V + W + 1}
\end{tikzcd}
\]

\[
\begin{tikzcd}
	S^\sigma \wedge K_V \wedge K_W \arrow{r}{} \arrow[swap]{d}{} & S^\sigma \wedge K_{V + W} \arrow{d}{} \\
	K_{V + \sigma} \wedge K_W \arrow{r}{} & K_{V + W + \sigma}
\end{tikzcd}
\]

\subsection{Multiplicative structures on $K_V^{C_2}$}

We turn to understanding the $\circ$-product on the fixed points of the spaces $K_V.$ 	Notice $(B^\sigma A)^{C_2}$ consists of points of the form 
$$(t_1, \cdots, t_n, 0, -t_n, \cdots , -t_1, a_1, \cdots, a_n, a, \gamma(a_n), \cdots, \gamma(a_1) ) \in (B^\sigma A)^{[2n + 1]}$$
where $a \in A^{C_2}$ 
since for 
$$(t_1, \cdots, t_{m}, -t_{m}, \cdots, -t_1, a_1, \cdots, a_{m}, \gamma(a_{m}) \cdots, \gamma(a_1)) \in (B^\sigma A)^{[2m]},$$ 
there is a degeneracy map inducing an equivalence to
$$(t_1, \cdots, t_n, 0, -t_n, \cdots , -t_1, a_1, \cdots, a_n, *, \gamma(a_n), \cdots, \gamma(a_1) ) \in (B^\sigma A)^{[2n + 1]}.$$
Taking the fixed points in the construction of map (\refeq{eq:6}) we recover the classical nonequivariant $\circ$ product on the fixed point spaces.  

\subsection{Circle product generators for $H_\star K_{n \sigma}$}

Recall that $H \umF_2$ has generators $a \in H {\umF_2}_{\{- \sigma\}}$ and $u \in {H \umF_2}_{\{ 1 - \sigma \} }$. To describe our answer, we recall our notation for $H_\star K_\sigma$. Let 
\begin{align*}
e_\sigma \in H_\sigma K_\sigma, && \bar{\alpha_i} \in H_{\rho i} K_\sigma, && (i \geq 0).
\end{align*}
The homology, $H_\star K_\sigma,$ is exterior on generators 
\begin{align*}
e_\sigma, && \bar{\alpha}_{(i)} = \bar{\alpha}_{2^i} && (i \geq 0)
\end{align*}
with coproduct  
\begin{align*}
\psi (e_\sigma) & = 1 \otimes e_\sigma + e_\sigma \otimes 1 + a (e_\sigma \otimes e_\sigma) \\
\psi( \bar{\alpha}_{n} ) & = \sum_{i = 0}^n \bar{\alpha}_{n - i} \otimes \bar{\alpha}_i + \sum_{i = 0}^{n-1} u (e_\sigma \bar{\alpha}_{n - 1 - i} \otimes e_\sigma \bar{\alpha}_{i}) \\
\end{align*}
For finite sequences 
$$J = (j_\sigma, j_0, j_1, \cdots) \qquad j_k \geq 0,$$
define
$$(e_\sigma \bar{\alpha})^J = e_\sigma^{\circ j_\sigma} \circ \bar{\alpha}_{(0)}^{\circ j_0} \circ \bar{\alpha}_{(1)}^{\circ j_1} \cdots$$
where the $\circ$ product comes from the pairing $\circ: K_V \wedge K_W \to K_{V + W}.$

\begin{theorem} \label{Thm:signed} Then
$$H_\star K_{*\sigma} \cong \otimes_J E[(e_\sigma \bar{\alpha})^J]$$
As an algebra where the tensor product is over all J and the coproduct follows by Hopf ring properties from the $\bar{\alpha}$'s. 
\end{theorem}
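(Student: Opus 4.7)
The plan is to apply the Behrens--Wilson computational lemma (Lemma \ref{lem:BW}) space-by-space: for each $n$, I take the proposed basis of $H_\star K_{n\sigma}$ to be the set of $*$-products of distinct generators $(e_\sigma \bar{\alpha})^J$ with $|J| := j_\sigma + \sum_k j_k = n$, and verify that it maps to a free basis of $H_*((K_{n\sigma})^e)$ under $\Phi^e$ and to a free basis of $H_*((K_{n\sigma})^{\Phi C_2})$ under $\Phi^{C_2}$. Once both conditions are checked, freeness of $H_\star K_{n\sigma}$ over $H_\star$ is automatic from the lemma, the exterior $*$-algebra structure is read off, and the Hopf ring coproduct follows from the already-recorded coproducts on the generators $e_\sigma$ and $\bar{\alpha}_{(i)}$ from Section \ref{sec:equivarPrelims} together with the distributive law \eqref{eq:distlaw}.

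For the $\Phi^e$ condition, the underlying nonequivariant spectrum of $H\umF_2$ is $H\mF_2$, so $(K_{n\sigma})^e \simeq K(\mF_2,n) = K_n$, with $\Phi^e(e_\sigma) = e_1$ and $\Phi^e(\bar{\alpha}_{(i)}) = \alpha_{(i)}$ by degree and naming conventions. The $\circ$-pairing of Theorem \ref{thm:5.1} restricts on underlying spectra to the classical cup product pairing, so $\Phi^e$ is multiplicative for both $*$ and $\circ$. Hence $\Phi^e((e_\sigma \bar{\alpha})^J) = (e_1 \alpha)^J$, and the proposed basis maps bijectively onto the Ravenel--Wilson basis of $H_* K_n$ furnished by Theorem \ref{Thm:RWp2} at the prime $2$ (the index sets of $J$'s with $|J|=n$ agree literally).

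For the $\Phi^{C_2}$ condition, Caruso's proposition from Subsection \ref{subsec:fixedptspaces} yields $(K_{n\sigma})^{C_2} \simeq K_0 \times K_1 \times \cdots \times K_n$, and Subsection \ref{subsec:underlyinghom} identifies the Hopf ring basis of $H_*((K_{n\sigma})^{C_2})$ as circle words in the classes $e_0$ and $a_{(i_1)} \circ \cdots \circ a_{(i_k)}$. I would first compute $\Phi^{C_2}$ on generators by hand, using that $\Phi^{C_2}$ inverts the Euler class $a$, passes to $V^{C_2}$-grading, and quotients by $(a^{-1}u)$. The expected outcome is $\Phi^{C_2}(e_\sigma) = e_0$ (the degree-$0$ class in the $K_0$ factor) and $\Phi^{C_2}(\bar{\alpha}_{(i)}) = a_{(i)}$ in the $K_1$ factor of the Caruso product. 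Multiplicativity of $\Phi^{C_2}$ with respect to $\circ$, combined with the compatibility of the equivariant $\circ$-product with its classical fixed-point restriction described in Subsection 5.2, then identifies $\Phi^{C_2}((e_\sigma \bar{\alpha})^J)$ with the word in $e_0, a_{(0)}, a_{(1)}, \ldots$ indexed by $J$. By Theorem \ref{Thm:RWp2} applied to each factor of the Caruso product, these words form a free basis of $H_*((K_{n\sigma})^{C_2})$.

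The main obstacle will be the explicit computation of $\Phi^{C_2}(\bar{\alpha}_{(i)})$ and the verification that $\Phi^{C_2}$ intertwines the two $\circ$-products in the expected way. The class $\bar{\alpha}_{(i)}$ sits in representation degree $\rho 2^i$, which has underlying dimension $2^{i+1}$ but $C_2$-fixed dimension $2^i$, and it must detect precisely the class $a_{(i)}$ on the $K_1$ factor while contributing trivially to the $K_0$ factor. Navigating this requires delicate bookkeeping using the localization $H_\star^{\Phi C_2} \simeq H_*(X^{\Phi C_2})[a^{-1}u]$ modulo $(a^{-1}u)$, and the $u$-divisible term in the coproduct of $\bar{\alpha}_n$ should be precisely what encodes this identification. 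Once this is pinned down, the geometric construction of $\circ$ in Theorem \ref{thm:5.1} is built from (twisted) bar constructions whose fixed points, as recorded in Subsection 5.2, recover the classical $\circ$-product, so the identification extends from generators to arbitrary circle words and the Behrens--Wilson lemma concludes the proof.
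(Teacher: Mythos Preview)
Your proposal is correct and follows essentially the same route as the paper: apply the Behrens--Wilson lemma by checking that $\Phi^e$ sends $(e_\sigma\bar\alpha)^J$ to $(e_1\alpha)^J$ (matching the Ravenel--Wilson basis of $H_*K_n$) and that $\Phi^{C_2}$ sends it to $e_0^{\circ j_\sigma}\circ a_{(0)}^{\circ j_0}\circ a_{(1)}^{\circ j_1}\cdots$ (matching the basis of $H_*(K_{n\sigma}^{C_2})$ from \S\ref{subsec:underlyinghom}), then deduce the exterior $*$-structure via the distributive law~\eqref{eq:distlaw}. The paper's proof is in fact terser than your outline and simply asserts the two maps on generators without the bookkeeping you flag as the ``main obstacle''; your caution there is warranted but does not indicate a divergence in strategy.
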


\begin{proof}
For finite sequences 
$$J = (j_\sigma, j_0, j_1, \cdots) \qquad j_k \geq 0,$$
define $ \vert \vert J \vert \vert = \sum j_k$ (including the $\sigma$ subscript) and 
$$(e_\sigma \bar{\alpha})^J = e_\sigma^{\circ j_\sigma} \circ \bar{\alpha}_{(0)}^{\circ j_0} \circ \bar{\alpha}_{(1)}^{\circ j_1} \cdots.$$ Consider elements 
$$ (e_\sigma \bar{\alpha})^J $$
with $\vert \vert J \vert \vert = n$ in the homology of $B^\sigma K_{(n-1) \sigma}.$

To show these elements in fact form a free basis for the homology, we show that they satisfy the conditions of the Behrens-Wilson computational lemma. 
 The map to the underlying homology, $H_\star K_{n \sigma} \to H_* K_n,$ , the underlying homology of $H_\star K_{n \sigma},$ is given by
$$(e_\sigma \bar{\alpha})^J \mapsto (e_1 \alpha)^J.$$
The map on fixed points $H_\star K_{n \sigma} \to H_* K_{n\sigma}^{C_2},$ is given by
$$(e_\sigma \bar{\alpha})^J \mapsto e_0^{\circ j_\sigma} \circ a_{(0)}^{\circ j_0} \circ a_{(1)}^{\circ j_1} \cdots.$$
Thus these elements form a from a free basis for $H_\star K_{n \sigma}.$ 

We deduce the multiplicative ring structure using a Hopf ring argument due to Ravenel and Wilson \cite{Wilson1982}. Each $(e_\sigma \bar{\alpha})^J$ can be written as $e_\sigma^{\circ j_\sigma} \circ \bar{\alpha}_{(0)}^{\circ j_0} \circ \bar{\alpha}_{(1)}^{\circ j_1} \cdots \bar{\alpha}_{(n)}^{\circ j_n}$ where $n$ is some nonnegative integer or $n = \sigma.$  
By the distributive law (\ref{eq:distlaw}),
$$(e_\sigma \bar{\alpha})^J * (e_\sigma \bar{\alpha})^J = e_\sigma^{\circ j_\sigma} \circ \bar{\alpha}_{(0)}^{\circ j_0} \circ \bar{\alpha}_{(1)}^{\circ j_1} \circ \cdots \circ (\bar{\alpha}_{(n)} * \bar{\alpha}_{(n)}) = 0.$$
 The coproduct is induced by the map $K_\sigma \times \cdots \times K_\sigma \to K_{n \sigma}$ which is a map of coalgebras on $H_\star.$

\end{proof}

\begin{remark}
Note that $e_0^{\circ k} = e_0$ for $k > 0$ by Lemma \ref{lem:6.2}.
\end{remark}

\section{Bar and twisted bar spectral sequence computations}\label{sec:btwbssComps}

The first half of this section focuses on the $RO(C_2)$-graded bar spectral sequence. We describe the $d_1$-differentials, the Tor term coinciding with the $E^2$-page, and Hopf ring structure present in the spectral sequences computing $H_\star K_V$ when $\sigma + 1 \subset V.$

In the second half of this section, we study the analogous twisted spectral sequence, giving evidence of arbitrarily long equivariant degree shifting differentials appearing computations of the $RO(C_2)$-graded homology of the spaces $K_{*\sigma}.$ We describe how these differentials appear to arise in a structured way involving the norm.

\subsection{The $RO(C_2)$-graded bar spectral sequence}

The $RO(C_2)$-graded bar spectral sequences arises via a filtered complex in the same way as the ordinary integer graded version. The bar construction $B$ on a topological monoid $A,$ is filtered by 
$$ B^{[t]} A \simeq \underset{t \geq n \geq 0}{\amalg} \Delta^n \times A^n / \sim \qquad \subset B A$$
with associated graded pieces
$$\displaystyle{\nicefrac{B^{[t]} A}{B^{[t-1]}} A} \simeq S^t \wedge A^{\wedge t}.$$
Applying $H_\star(-)$ to these filtered spaces gives the $RO(C_2)$-graded bar spectral sequence with $E^1$-page
$$E^1_{t, \star} = {H}(S^t) \otimes {H}_\star(A)^{\otimes t}, $$
computing $H_\star(B A).$ This $RO(C_2)$-graded bar spectral sequence has
$$E^2_{*,\star} \simeq \text{Tor}_{*,\star}^{H_\star K_V} ({H\umF_2}_\star, {H\umF_2}_\star) \Longrightarrow H_\star B K_V \cong H_\star K_{V + 1}.$$

This spectral sequence behaves similarly to the integer graded version and collapses in many examples, including those for $BS^1 \simeq \mC P^\infty,$ $BS^\sigma \simeq \mC P^\infty_{tw}, $ and $B K_0 \simeq \mR P^\infty$  (Example \ref{ROGbssCollapsing}), which collapse for degree reasons.
\begin{example}
	$$\begin{aligned} \label{ROGbssCollapsing} 
		H_\star \mathbb{C} P^\infty & = E[\beta_{(0)}, \beta_{(1)}, \cdots ] = \Gamma [ e_2 ] \textrm{ where } \vert \beta_{(i)} \vert = 2^{i + 1},\\
		H_\star \mathbb{C} P_{tw}^\infty & = E[ \bar{\beta}_{(0)}, \bar{\beta}_{(1)}, \cdots ] = \Gamma [ e_\rho ] \textrm{ where } \vert \bar{\beta}_{(i)} \vert = \rho 2^i\nonumber \\
		H_\star \mathbb{R}P^\infty & = E [ e_1, \alpha_{(0)}, \alpha_{(1)}, \cdots ] \textrm{ where } \vert e_1 \vert = 1 \textrm{ and } \vert \alpha_{(i)} \vert = 2^{i + 1}
	\end{aligned}$$
\end{example}

\subsection{The $RO(C_2)$-graded bar spectral sequence: $d_1$-differentials}

The classical bar construction does not introduce any group action hence the $d_1$-differentials in the $RO(C_2)$-graded bar spectral sequence behave in almost the same as those in the underlying integer graded spectral sequence. The difference is that the cycles supporting $d_1$-differentials in the $RO(C_2)$-graded spectral sequence are representation degree shifted copies of the $RO(C_2)$-graded homology of the point and their targets are the same. This is in contrast with the integer graded case where the differentials are maps of non-graded rings. For example, all $d_1$ differentials in the $RO(C_2)$-graded case look behave like those shown in Figure \ref{Fig:barss}. 
\begin{figure}[ht] 
	\begin{center}
		\begin{tikzpicture}[scale=0.5]
			
			\coordinate (Origin)   at (0,0);
			\coordinate (XAxisMin) at (-2,0);
			\coordinate (XAxisMax) at (9,0);
			\coordinate (YAxisMin) at (0,-2);
			\coordinate (YAxisMax) at (0,9);
			\draw [thick, gray,-latex] (XAxisMin) -- (XAxisMax) node[anchor=west] {$p$};
			\draw [thick, gray,-latex] (YAxisMin) -- (YAxisMax) node[anchor=east] {$q$};
			
			\foreach \y in {-1,0,1,2,3,4,5,6,7,8}\draw [gray] (-0.1, \y) -- (0.1, \y) node[anchor=east] {$\y$};
			
			\foreach \x in {-1,1,2,3,4,5,6,7,8}\draw [gray] (\x, -0.1) -- (\x, 0.1) node[anchor=south] {$\x$};
			
			\draw [line width = 0.5mm, blue] (4,-2) -- (4,3);
			\draw [line width = 0.5mm, blue] (-1,-2) -- (4,3);
			\node [] (e) at (4, -2.5) {\color{blue} $[xy] $ \color{black} } ;
			\draw [line width = 0.5mm, blue] (4,5) -- (4,9);
			\draw [line width = 0.5mm, blue] (4,5) -- (8,9); 
			
			\draw [line width = 0.5mm, blue] (5,-2) -- (5,3);
			\draw [line width = 0.5mm, blue] (0,-2) -- (5,3);
			\node [] (e) at (6.25, 2.25) {\color{blue} $[x \vert y] $ \color{black} } ;
			\draw [line width = 0.5mm, blue] (5,5) -- (5,9);
			\draw [line width = 0.5mm, blue] (5,5) -- (8,8);
			
			\draw [-stealth, green, line width = 0.5 mm] (5,3) -- (4,3);
			\node [] (a) at (4.5, 3.5) {\color{green} \large $\mathbf{d_1} $ \color{black}};
			
		\end{tikzpicture}
		\caption{Example: A $d_1$-differential in the $RO(C_2)$-graded bar spectral sequence}
		\label{Fig:barss}
	\end{center}
\end{figure}
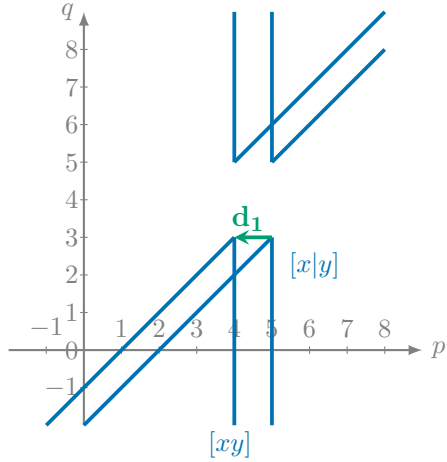

In greater specificity, Figure \ref{Fig:barssdiffmoredeets} 
shows a $d_1$ differential in the $RO(C_2)$-graded bar spectral sequence 
$$E^2_{*,\star} \simeq \text{Tor}_{*,\star}^{H_\star K_\sigma} ({H\umF_2}_\star, {H\umF_2}_\star) \Longrightarrow H_\star B K_\sigma \cong H_\star K_{\sigma + 1}$$ 
computing the $RO(C_2)$-graded homology of $K_\rho.$
In the figure, $x := e_\sigma$ with $\vert x \vert = \sigma$ and $y := \bar{\alpha}_{(0)}$ with $\vert y \vert = \rho.$ The two double cones shown are supported by the bar representatives $[xy]$ and $[x \, \vert \, y].$ The $d_1$-differential maps from the unit of the infinite dimensional graded ring ${H \umF_2}_\star$ supported by $[x \, \vert \, y]$ onto the unit of the $RO(C_2)$-graded homology of a point supported by the bar representative $[xy].$ Figure \ref{Fig:barssdiffmoredeets} depicts that this map of units in fact induces a map of graded rings surjecting onto the copy of the $RO(C_2)$-graded homology of a point supported by $[xy].$ 
\begin{figure}[ht] 
	\begin{center}
		\begin{tikzpicture}[scale=0.5]
			
			\coordinate (Origin)   at (0,0);
			\coordinate (XAxisMin) at (-2,0);
			\coordinate (XAxisMax) at (9,0);
			\coordinate (YAxisMin) at (0,-2);
			\coordinate (YAxisMax) at (0,9);
			\draw [thick, gray,-latex] (XAxisMin) -- (XAxisMax) node[anchor=west] {$p$};
			\draw [thick, gray,-latex] (YAxisMin) -- (YAxisMax) node[anchor=east] {$q$};
			
			\foreach \y in {-1,0,1,2,3,4,5,6,7,8}\draw [gray] (-0.1, \y) -- (0.1, \y) node[anchor=east] {$\y$};
			
			\foreach \x in {-1,1,2,3,4,5,6,7,8}\draw [gray] (\x, -0.1) -- (\x, 0.1) node[anchor=south] {$\x$};
			
			\draw [line width = 0.5mm, blue] (4,-2) -- (4,3);
			\draw [line width = 0.5mm, blue] (-1,-2) -- (4,3);
			\node [] (e) at (4, -2.5) {\color{blue} $[xy] $ \color{black} } ;
			\draw [line width = 0.5mm, blue] (4,5) -- (4,9);
			\draw [line width = 0.5mm, blue] (4,5) -- (8,9); 
			
			\draw [line width = 0.5mm, blue] (5,-2) -- (5,3);
			\draw [line width = 0.5mm, blue] (0,-2) -- (5,3);
			\node [] (e) at (6.5, 2.5) {\color{blue} $[x \vert y] $ \color{black} } ;
			\draw [line width = 0.5mm, blue] (5,5) -- (5,9);
			\draw [line width = 0.5mm, blue] (5,5) -- (8,8);
			
			\draw [-stealth, green, line width = 0.5 mm] (5,3) -- (4,3);
			\node [] (a) at (4.5, 3.5) {\color{green} \large $\mathbf{d_1} $ \color{black}};
			
			\draw [-stealth, green, line width = 0.5 mm] (5,5) -- (4,5);
			\draw [-stealth, green, line width = 0.5 mm] (5,6) -- (4,6);
			\draw [-stealth, green, line width = 0.5 mm] (5,7) -- (4,7);
			\draw [-stealth, green, line width = 0.5 mm] (5,8) -- (4,8);
			
			\draw [-stealth, green, line width = 0.5 mm] (6,6) -- (5,6);
			\draw [-stealth, green, line width = 0.5 mm] (7,7) -- (6,7);
			\draw [-stealth, green, line width = 0.5 mm] (8,8) -- (7,8);
			
			\draw [-stealth, green, line width = 0.5 mm] (4,2) -- (3,2);
			\draw [-stealth, green, line width = 0.5 mm] (3,1) -- (2,1);
			\draw [-stealth, green, line width = 0.5 mm] (2,0) -- (1,0);
			\draw [-stealth, green, line width = 0.5 mm] (1,-1) -- (0,-1);
			
			\draw [-stealth, green, line width = 0.5 mm] (5,2) -- (4,2);
			\draw [-stealth, green, line width = 0.5 mm] (5,1) -- (4,1);
			\draw [-stealth, green, line width = 0.5 mm] (5,0) -- (4,0);
			\draw [-stealth, green, line width = 0.5 mm] (5,-1) -- (4,-1);
			
		\end{tikzpicture}
		\caption{A more detailed picture of a $d_1$-differential in the $RO(C_2)$-graded bar spectral sequence}
		\label{Fig:barssdiffmoredeets}
	\end{center}
\end{figure}
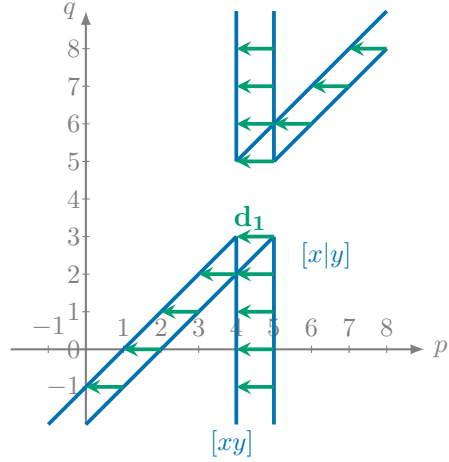

\subsection{Hopf ring structure in the $\bm{RO(C_2)}$-graded bar spectral sequence and $\mathbf{H_\star K_V,}$ $\mathbf{\bm{\sigma + 1 \subset} V}$}

In Theorem \ref{Thm:signed}, we computed $H_\star K_{n \sigma}$, showing that it is free over $H_\star$. To compute $H_\star K_V$ for real representations $V \cong i + j \sigma,$ we consider $\circ$-product structure in the $RO(C_2)$-graded bar spectral sequence
$$E_{*,*}^2 \simeq \text{Tor}_{*, *}^{H_\star K_V} (H_\star, H_\star) \Rightarrow H_\star K_{V + 1}, $$	
and observe that theorems of Thomason and Wilson extend directly from the nonequivariant integer graded setting to the $C_2$-equivariant $RO(C_2)$-graded setting. In Theorem \ref{thm:diffpair}, we need an additional flatness hypothesis to account for $H_\star(X; \umF_2)$ not necessarily being flat, unlike $H_*(X; \mF_2).$

\begin{theorem}[\cite{ThomasonWilson1980}]
	The $\circ$ product factors as 
	\begin{align*}
	B_t K_V \times K_W \to B_t K_{V + W} \\
	\bigcap \qquad \qquad \qquad \bigcap \qquad \\
	\circ: B K_V \times K_W \to B K_{V + W}
	\end{align*}
	and the map 
	\begin{align*}
	\displaystyle{\nicefrac{B_t K_V}{B_{t-1} K_V}} \times K_W \to \displaystyle{\nicefrac{B_t K_{V + W}}{B_{t-1} K_{V + W}}}  \\
	\vertsimeq \qquad \qquad \qquad \vertsimeq \qquad \qquad \qquad \\
	S^{t} \wedge K_V^{\wedge t} \times K_W \to S^{t} \wedge K_{V+W}^{\wedge t} \qquad
	\end{align*}
	is described inductively as $(k_1, \cdots, k_t) \circ k = (k_1 \circ k, \cdots, k_t \circ k).$
\end{theorem}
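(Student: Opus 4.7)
The plan is to prove the theorem by a direct analysis of the explicit point-set model for $\circ$ given in Theorem \ref{thm:5.1}. Recall formula \eqref{eq:7}: on the bar-construction level, the product is given by $\{(t,x)\} \circ y = \{(t,\, x \circ y)\}$, where $t \in \Delta^n$, $x = (x_0, \ldots, x_n) \in K_V^n$, and $x \circ y$ denotes the coordinatewise circle product with $y \in K_W$. This formula is the key computational tool; both claims in the theorem will fall out of it mechanically.

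First, I would verify that $\circ$ respects the bar filtration. By construction, $B_t K_V$ is the image of $\coprod_{n \leq t} \Delta^n \times K_V^n$ inside $B K_V$. Since \eqref{eq:7} leaves the simplex index $n$ unchanged — applying $\circ$ merely transforms the $K_V$-coordinates into $K_{V+W}$-coordinates — the product restricts to a map $B_t K_V \times K_W \to B_t K_{V+W}$ for every $t$, giving the asserted factorization. Passing to the colimit in $t$ recovers the original pairing $BK_V \times K_W \to BK_{V+W}$.

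Next, I would identify the induced map on successive filtration quotients. The standard identification $B_t K_V / B_{t-1} K_V \simeq (\Delta^t / \partial \Delta^t) \wedge K_V^{\wedge t} \simeq S^t \wedge K_V^{\wedge t}$ follows from the fact that the simplicial relations in the (twisted) bar construction collapse $\partial \Delta^t$ and all basepoint-containing chains into $B_{t-1} K_V$. Under this identification, a class represented by $t_1 \wedge \cdots \wedge t_t \wedge k_1 \wedge \cdots \wedge k_t$ corresponds to $\{(t_1, \ldots, t_t;\, k_1, \ldots, k_t)\}$, which \eqref{eq:7} sends to $\{(t_1, \ldots, t_t;\, k_1 \circ k, \ldots, k_t \circ k)\}$, i.e.\ to $t_1 \wedge \cdots \wedge t_t \wedge (k_1 \circ k) \wedge \cdots \wedge (k_t \circ k)$. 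This is precisely the inductive diagonal description $(k_1, \ldots, k_t) \circ k = (k_1 \circ k, \ldots, k_t \circ k)$.

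The main obstacle is ensuring the argument carries over to the twisted $B^\sigma$ case and not only the classical $B$ case treated by Thomason--Wilson. In the twisted construction $C_2$ acts on $\Delta^n$ by sign-reversing and reversing the order of coordinates, and on $K_V^n$ by reversing order and applying $\gamma$. Since \eqref{eq:7} is defined coordinatewise in $x$, it commutes with this twisted action provided $\circ: K_V \wedge K_W \to K_{V+W}$ is itself $C_2$-equivariant, which was established in Theorem \ref{thm:5.1}. The filtration-quotient identification goes through unchanged because the same face and degeneracy relations define the filtration in both $B$ and $B^\sigma$, so no further complications arise.
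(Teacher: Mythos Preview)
Your argument is correct. The paper does not actually supply its own proof of this statement: it is recorded as a theorem of Thomason--Wilson and the surrounding text simply asserts that their results ``extend directly from the nonequivariant integer graded to the $C_2$-equivariant $RO(C_2)$-graded setting.'' Your proposal fills in exactly that extension, and does so in the most natural way --- by reading the filtration-preservation and the associated-graded description directly off the explicit formula~\eqref{eq:7}. This is presumably what Thomason--Wilson do nonequivariantly and what the author has in mind when she says the result extends; you have just made the verification explicit, including the check of $C_2$-equivariance in the twisted case.
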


\begin{theorem}[\cite{ThomasonWilson1980}] \label{thm:diffpair}
	Let $E_{*, \star}^r (E_\star K_V) \implies E_\star K_{V + 1}$ be the bar spectral sequence and suppose $E^r$ is $H_\star$-flat for all $i \leq r$. Compatible with 
	$$\circ: E_\star K_{V + 1} \otimes_{H_\star} E_\star K_W \to E_\star K_{V + W + 1}$$
	is a pairing
	\begin{align} \label{barsspairing}
	E_{t,\star}^r (E_\star K_V) \otimes_{H_\star} E_\star K_W \to E_{t, \star}^r (E_\star K_{V + W}) 
	\end{align}
	with $d^r(x) \circ y = d^r (x \circ y).$ For $r=1$ this pairing is given by
	$$(k_1 \vert \cdots \vert k_t) \circ k = \sum \pm (k_1 \circ k' \vert k_2 \circ k'' \vert \cdots \vert k_s \circ k^{(t)})$$
	where $k \to \sum k' \otimes k'' \otimes \cdots \otimes k^{(t)}$ is the iterated reduced coproduct. 
\end{theorem}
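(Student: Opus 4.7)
The plan is to deduce the theorem formally from the preceding Thomason--Wilson theorem, which already provides the essential space-level input: for each fixed $k \in K_W$, the map $-\circ k$ is a filtration-preserving self-map of the (untwisted or twisted) bar filtration on $K_{V+1}$ (resp.\ $K_{V+\sigma}$), and on the subquotient $B_t K_V / B_{t-1} K_V \simeq S^t \wedge K_V^{\wedge t}$ it has the explicit form $(k_1, \ldots, k_t) \mapsto (k_1 \circ k, \ldots, k_t \circ k)$. Applying $H_\star$ turns this into a filtered map of graded $H_\star$-modules, and running the standard filtered-complex spectral sequence machinery produces pairings \eqref{barsspairing} on every $E^r$, which are $H_\star$-bilinear by distributivity of $\circ$ over $*$.

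The Leibniz-type identity $d^r(x) \circ y = d^r(x \circ y)$ is then automatic rather than a separate computation: each $d^r$ is induced from the filtration, and any filtration-preserving map of the underlying complex commutes with the induced differentials on its associated graded. Equivalently, $y$ sits in filtration zero of the bar filtration for $K_V$ and is thus a permanent cycle, so $\circ$-multiplication by $y$ can only interact with $d^r$ through naturality.

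For the $r = 1$ formula, factor the space-level pairing as
\[
K_V^{\wedge t} \wedge K_W \xrightarrow{\;\mathrm{id} \wedge \Delta\;} K_V^{\wedge t} \wedge K_W^{\wedge t} \xrightarrow{\;\mathrm{shuffle}\;} (K_V \wedge K_W)^{\wedge t} \xrightarrow{\;\circ^{\wedge t}\;} K_{V+W}^{\wedge t},
\]
where $\Delta$ is the iterated diagonal. Passing to $H_\star$ and using K\"unneth to identify $E^1_{t, \star} \cong H_\star K_V^{\otimes t}$, the diagonal $\Delta$ becomes the iterated reduced coproduct on $H_\star K_W$, producing exactly the stated formula $(k_1 \mid \cdots \mid k_t) \circ k = \sum \pm (k_1 \circ k' \mid k_2 \circ k'' \mid \cdots \mid k_t \circ k^{(t)})$.

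The hard part is verifying that this K\"unneth identification of $E^1$ passes through $C_2$-equivariantly, particularly for the twisted bar filtration where $\gamma$ reverses the $t$ smash factors of $K_V^{\wedge t}$. One must check that the iterated diagonal on $K_W$ and the shuffle intertwine this reversal action correctly with the componentwise $\circ^{\wedge t}$; this should follow because $\circ$ was built $C_2$-equivariantly in Theorem~\ref{thm:5.1} and because we work at the prime $2$, where signs disappear, but it is the one step that genuinely depends on the equivariant structure rather than being a verbatim translation of the nonequivariant argument.
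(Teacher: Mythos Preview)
The paper does not actually prove this theorem: it is stated with a citation to \cite{ThomasonWilson1980}, accompanied only by the remark that the Thomason--Wilson results ``extend directly from the nonequivariant integer graded to the $C_2$-equivariant $RO(C_2)$-graded setting.'' Your proposal therefore supplies more detail than the paper itself, and the outline you give---deduce the pairing and the differential identity formally from the filtration-preserving map of the preceding theorem, then read off the $E^1$ formula by factoring the space-level map $K_V^{\wedge t} \wedge K_W \to K_{V+W}^{\wedge t}$ through the iterated diagonal on $K_W$---is exactly the standard Thomason--Wilson argument that the paper is invoking.

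One small correction: you describe $-\circ k$ as a ``filtration-preserving self-map,'' but it is a map $B K_V \to B K_{V+W}$ between different filtered objects; this does not affect the argument. Your caveat about the twisted case (checking that the diagonal and shuffle are $C_2$-equivariant for the reversal action) is a genuine point that the paper does not address explicitly, though note that the theorem as stated and as used in the paper concerns only the \emph{untwisted} bar spectral sequence converging to $E_\star K_{V+1}$ (the ``$V+\sigma$'' in the convergence line appears to be a typo), so this extra equivariance check is not strictly needed for the statement at hand.
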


\begin{theorem} \label{Thm:HKVgen}
	The $RO(C_2)$-graded homology of $K_V,$ where $\sigma + 1 \subset V,$ is exterior on generators given by the cycles on the $E^2$-page of the $RO(C_2)$-graded spectral sequence.  
\end{theorem}

\begin{proof}
	Let $E_{*, \star}^r (E_\star K_V) \implies E_\star K_{V + 1}$ be the bar spectral sequence and $\Delta:K_V \to K_V \times K_V$ be the diagonal map. If $E^r$ is $H_\star$-flat for all $i \leq r,$ then there is a natural transformation $\mu: E^r(X) \otimes E^r(Y) \to E^r(X \times Y)$ and the coalgebra structure on $E^r$ is given by $\mu^{-1} \Delta_*.$
	
	Suppose $E^r_{*,\star}$ where $r>2$ is the first page after the $E^2_{*, \star}$-page with a nonzero differential. Then $E^r_{*,\star} = E^2_{*, \star} \cong \textrm{Tor}_{*,*}^{H_\star K_V} (H_\star, H_\star)$ which is a coalgebra, so $\mu$ is an isomorphism and the differentials $d_r$ satisfy the Leibniz and coLeibniz rules.
	
	Consider the shortest nonzero differential $d_r$ in lowest topological degree. If such a differential exists, it must map from an algebra indecomposable to a coalgebra primitive. To see this, we recount a classical Hopf ring argument, which also appears in \cite{RavanelWilson1980} and \cite{AngVigRog2005}. Suppose $d_r(xy) \neq 0$ and $xy$ is in lowest topological degree. Then 
	$$d_r(xy) = d^r(x)y \pm x d_r(y) $$
	so $d_r (x)$ or $d_r(y)$ are nonzero contradicting that $xy$ is in lowest topological degree. Dually, if $d_r(z)$ is not a coalgebra primitive, then 
	$$\psi (z) = z | 1 + 1 | z + \Sigma z_i' | z_i" $$
	and the coLeibniz formula 
	$$\psi \circ d_r = (d_r | 1 \pm 1 | d_r)\psi $$
	implies $d_r(z_i')$ or $d_r(z_i")$ is nonzero, contradicting that $z$ is in lowest topological degree. 
	
	There are no coalgebra primitives on $E^2_{*, \star} = E^r_{*,\star} $ due to the coproduct structure on $H_\star K_\sigma.$ Thus there are no nontrivial differentials and the spectral sequence collapses. 
	
	Let $x$ be a cycle on $E^2_{*, \star}.$ To show there are no extension problems, we only need to show $$x * x = 0.$$ The multiplication by $2$ map $2: K_V \to K_V,$ which factors as the composition
	$$K_V  \xrightarrow[]{\quad \Delta \quad} K_V \times K_V \xrightarrow{*} K_V$$
	is homotopically trivial so
	$$0 = 2_\star: H_\star K_V \to H_\star K_V.$$
	Consider the coproduct structure on $H_\star K_{* \sigma}$ and $E^2_{*, \star}.$ There is a cycle $y$ on $E^2_{*, \star},$ with the symmetric term of the coproduct $\psi(y)$ equal to $x \otimes x.$ This means there is $y$ such that $2_\star y = x * x$ so $x * x= 0$ as desired. 
	
\end{proof}

\subsection{Circle product names for the generators of $\bm{H_\star K_{\sigma + i}}$} \label{sc:KsigmaI}
We give names to the generators of $H_\star K_{\sigma + i}$ and indicate how the bookkeeping becomes increasingly complicated as the number of sign representations in $V$ where $1 + \sigma \subset V$ increases (Example \ref{ex:moreGens}).  

To write these answers, we recall our notation for $H_\star K_\rho$. Let 
$$\bar{\beta}_{i} \in H_{\rho i} K(\umZ, \rho), \, \qquad (i \geq 0).$$
This gives additional generators,
$$\bar{\beta}_{(i)} = \bar{\beta}_{2^i} \quad (i \geq 0), $$
of $H_\star K_\rho$ with coproduct  
$$ \psi( \bar{\beta}_{n} ) = \sum_{i = 0}^n \bar{\beta}_{n - i} \otimes \bar{\beta}_i.$$
Then for finite sequences 
$$ I = (i_1, i_2, \cdots, i_k ), \qquad 0 \leq i_1 < i_2 < \cdots, $$
$$ W = (w_1, w_2, \cdots, w_q), \qquad 0 \leq w_1 < w_2 < \cdots,$$ 
$$	J = (j_{-1}, j_0, j_1, \cdots, j_\ell ), \qquad \text{where } j_{-1} \in \{0,1\} \text{ and all other} j_n \geq 0, $$
and 
$$ Y = (y_{-1}, y_0, y_1, \cdots, y_r ), \qquad \text{where } y_{-1} \in \{0,1\} \text{ and all other } y_n \geq 0, $$
define
$$(e_1 \alpha \beta)^{I,J} = e_1^{\circ j_{-1}} \circ \alpha_{(i_1)} \circ \alpha_{(i_2)} \circ \cdots \circ \alpha_{(i_k)} \circ \beta_{(0)}^{\circ j_0} \circ \beta_{(1)}^{\circ j_1} \circ \cdots \circ \beta_{(\ell)}^{\circ j_\ell},$$
$$(e_1 \alpha \beta)^{W,Y} = e_1^{\circ y_{-1}} \circ \alpha_{(w_1)} \circ \alpha_{(w_2)} \circ \cdots \circ \alpha_{(w_q)} \circ \beta_{(0)}^{\circ y_0} \circ \beta_{(1)}^{\circ y_1} \circ \cdots \circ \beta_{(r)}^{\circ j_r},$$
$$ \vert I \vert = k, \qquad \vert W \vert = q \qquad \vert \vert J \vert \vert = \Sigma j_n, \qquad \text {and } \vert \vert Y \vert \vert = \Sigma y_n.$$
Then

\begin{theorem} \label{Thm:sigmaPlusl} We have
	$$ H_\star K_{\sigma + i} \cong E[(e_1 \alpha \beta)^{I,J} \circ \bar{\alpha}_{(m)}, (e_1 \alpha \beta)^{W, Y} \circ \bar{\beta}_{(t)}] $$
	where $m > i_k$ and $m \geq l,$ $t > w_q$ and $t \geq y_r,$ $\vert I \vert + 2 \vert \vert J \vert \vert = i$ and $\vert W \vert + 2 \vert \vert Y \vert \vert = i - 1,$  and the coproduct follows by Hopf ring properties from the $\alpha_{(i)}$'s, $\beta_{(i)}$'s, $\bar{\alpha}_{(i)}$'s and $\bar{\beta}_{(i)}$'s. 
\end{theorem}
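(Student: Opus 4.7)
The plan is to prove Theorem \ref{Thm:sigmaPlusl} by induction on $i \geq 0$, using the $RO(C_2)$-graded bar spectral sequence
\begin{equation*}
E^2_{s,\star} \simeq \text{Tor}^{H_\star K_{\sigma + i}}_{s, \star}(H_\star, H_\star) \Rightarrow H_\star K_{\sigma + i + 1}.
\end{equation*}
The base case $i = 0$ recovers $H_\star K_\sigma = E[e_\sigma, \bar{\alpha}_{(m)}]$, which matches the statement once the empty tuples are interpreted correctly. For the inductive step, the inductive hypothesis gives $H_\star K_{\sigma + i}$ as an exterior $H_\star$-algebra on the listed circle product monomials. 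Over $\FF_2$, $\text{Tor}^{E[x]}(H_\star, H_\star) \simeq \Gamma[sx] \simeq \bigotimes_{k \geq 0} E[\gamma_{2^k}(sx)]$, so the $E^2$-page is again a tensor product of exterior $H_\star$-algebras on divided-power classes of suspensions of the previous generators.

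Next I identify each generator of $E^2$ with an explicit element of $H_\star K_{\sigma + i + 1}$ constructed as a circle product, using the bar-spectral-sequence pairing of Theorem \ref{thm:diffpair}. The first suspension $s(-)$ is detected on $E^\infty$ by circle multiplication with $e_1 \in H_1 K_1$, so $s(x)$ corresponds to $e_1 \circ x$; the higher divided powers $\gamma_{2^k}(s(x))$ are matched with circle products involving the classes $\alpha_{(k)}$ or $\beta_{(k)}$ via the relations $e_1 \circ e_1 = \beta_{(0)}$ and $\alpha_{(k-1)} \circ \alpha_{(k-1)} = \beta_{(k)}$ at the prime $2$, exactly mirroring Ravenel–Wilson's non-equivariant identifications. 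A suspended $\bar{\alpha}$-class promotes to a $\beta_{(k)} \circ \bar{\alpha}_{(m)}$ or $\bar{\beta}_{(t)}$ monomial depending on which input it is multiplied against, and analogously for a suspended $e_\sigma$ or $\alpha$-class. Tracking this across all generators of $H_\star K_{\sigma + i}$ yields exactly the two families of monomials listed in the theorem, with the index constraints $m > i_k, m \geq \ell$ and $t > w_q, t \geq y_r$ arising from the requirement that each exterior generator $\gamma_{2^k}(sx)$ appears at most once, and the split into the $\bar{\alpha}_{(m)}$-family and the $\bar{\beta}_{(t)}$-family arising from whether the "fresh" $K_\sigma$-input is coupled to an even or odd number of $e_1$-factors.

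To conclude, I appeal to the Behrens–Wilson computational Lemma \ref{lem:BW}. Under $\Phi^e$, the proposed circle-product monomials map to the known basis for the underlying nonequivariant homology $H_*(K_{i+1})$ described by Ravenel–Wilson's Theorem \ref{Thm:RWp2}, via $e_\sigma \mapsto e_1$, $\bar{\alpha}_{(m)} \mapsto \alpha_{(m)}$, $\bar{\beta}_{(m)} \mapsto \beta_{(m)}$. Under $\Phi^{C_2}$, Caruso's splitting identifies $K_{\sigma + i + 1}^{C_2}$ with a product of classical $K_n$'s, and the induced map sends each proposed monomial onto a corresponding product of Ravenel–Wilson generators whose cardinality matches by a direct count. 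Because both images form bases, the lemma guarantees freeness and basis-hood, which in particular forces the bar spectral sequence to collapse with no nontrivial differentials affecting these classes. The exterior ring structure follows from the distributive law $x \circ (y*z) = \sum (x' \circ y) * (x'' \circ z)$ as in the proof of Theorem \ref{Thm:signed}, and the coproduct description follows from $\circ$ being a map of coalgebras on $H_\star$.

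The main obstacle will be the careful identification of divided-power classes in $\text{Tor}$ with the appropriate circle-product monomials, particularly the matching of iterated $\gamma_{2^k}$ operations with the relations at $p=2$. This must be done compatibly with the pairing (\ref{barsspairing}), which requires propagating the argument through a nested induction on $k$ while keeping the book-keeping straight for the two parallel $(I,J)$ and $(W,Y)$ index families. Once these identifications are established, the spectral sequence collapse and the ultimate basis verification are forced by the Behrens–Wilson lemma, essentially reducing the equivariant computation to its non-equivariant shadows on the underlying and fixed-point spaces.
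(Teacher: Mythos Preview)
Your proposal is correct and follows essentially the same route as the paper: both arguments ultimately rest on the Behrens--Wilson Lemma~\ref{lem:BW}, checking that the proposed circle-product monomials map to bases under $\Phi^e$ and $\Phi^{C_2}$, with the ring and coalgebra structure deduced from the distributive law and the coalgebra property of $\circ$, just as in Theorem~\ref{Thm:signed}.

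The main difference is organizational. The paper's formal proof is terse: it simply writes down the generators and invokes Behrens--Wilson directly, with the bar spectral sequence serving only as surrounding motivation for \emph{discovering} the generators. You instead make the inductive bar spectral sequence argument the spine of the proof, computing $E^2$ as $\mathrm{Tor}$ of an exterior algebra and identifying each divided-power class $\gamma_{2^k}(sx)$ with a circle product via the pairing~(\ref{barsspairing}). This extra scaffolding is not logically necessary once you appeal to Behrens--Wilson (freeness and basis-hood force the collapse after the fact), but it does make the provenance of the index constraints $m>i_k$, $m\ge\ell$, $t>w_q$, $t\ge y_r$ transparent. One small wrinkle: your base case $i=0$ does not literally match the theorem's formula, since the stated generators for $i=0$ give only $E[\bar{\alpha}_{(m)}]$ and omit $e_\sigma$; the induction should instead take $H_\star K_\sigma$ as known input and begin at $i=1$.
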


We offer a proof distinct from that of Theorem \ref{Thm:HKVgen}. 

\begin{proof}
	Apply the Behrens-Wilson lemma to the generators $ (e_1 \alpha \beta)^{I,J} \circ \bar{\alpha}_{(m)}$ and $(e_1 \alpha \beta)^{W, Y} \circ \bar{\beta}_{(t)}$ defined in the theorem. The map to the underlying homology is clear as the generators have no $a$-torsion. On fixed points, 
	$$ (e_1 \alpha \beta)^{I,J} \circ \bar{\alpha}_{(m)} \mapsto (e_1 \alpha \beta)^{I,J} \circ a_{(m)} $$
	and 
	$$ (e_1 \alpha \beta)^{W, Y} \circ \bar{\beta}_{(t)} \mapsto (e_1 \alpha \beta)^{W, Y} \circ a_{(t)}, $$
	giving a basis for $K_{\sigma + i}^{C_2} \simeq K_{i + 1} \times K_i$ where the $a_{(i)}$ are notation for the underlying nonequivariant homology of $K_\sigma$ (see \textsection \ref{subsec:underlyinghom}). The multiplicative and comultiplicative structures are deduced similarly to Theorem \ref{Thm:signed}.
\end{proof}

\begin{example} \label{ex:moreGens}
\normalfont 
Consider the $RO(C_2)$-graded bar spectral sequence
$$ E^2_{*,\star} \simeq \text{Tor}_{*,\star}^{H_\star K_{2 \sigma}} ({H\umF_2}_\star, {H\umF_2}_\star) \Longrightarrow H_\star B K_{2 \sigma} \cong H_\star K_{2 \sigma + 1}.$$ 

The indecomposable cycles on the $E^2$-page are
$$ [ e_\sigma \circ e_\sigma ], \qquad [ e_\sigma \circ \bar{\alpha}_{(i)} ], \qquad [ \bar{\alpha}_{(j_1)} \circ \bar{\alpha}_{(j_2)} ]$$
$$\phi^{(k)} (e_\sigma \circ e_\sigma), \qquad \phi^{(k)} (e_\sigma \circ \bar{\alpha}_{(i)}), \qquad \phi^{(k)} (\bar{\alpha}_{(j_1)} \circ \bar{\alpha}_{(j_2)}),$$
where $\phi^{(k)}(x)$ is notation for the bar representative $\underbrace{ [ x \vert \cdots \vert x]}_{2^i\text{ - copies}}.$ 

Since trivial representation suspension is $\circ$ multiplication with $e_1,$ we can identify $ [ e_\sigma \circ e_\sigma ]$ with $e_1 \circ e_\sigma \circ e_\sigma,$ $ [ e_\sigma \circ \bar{\alpha}_{(i)} ]$ with $e_1 \circ  e_\sigma \circ \bar{\alpha}_{(i)},$ and $[ \bar{\alpha}_{(j_1)} \circ \bar{\alpha}_{(j_2)} ]$ with $e_1 \circ \bar{\alpha}_{(j_1)} \circ \bar{\alpha}_{(j_2)}.$ Using the bar spectral sequence pairing (\ref{barsspairing}) for 
$$B K_\sigma \times K_{\sigma} \to BK_{2\sigma},  $$
we can identify $ \phi^{(k)} (e_\sigma \circ \bar{\alpha}_{(i)})$ with $\bar{\beta}_{(j_1 + 1)} \circ \bar{\alpha}_{(j_2 + 1)},$
and using the bar spectral sequence pairing (\ref{barsspairing}) for
$$B K_0 \times K_{2 \sigma} \to B K_{2\sigma} $$
we can identify $\phi^{(k)} (\bar{\alpha}_{(j_1)} \circ \bar{\alpha}_{(j_2)})$ with $ \alpha_{(i_1)} \circ \bar{\alpha}_{(i_2)} \circ \bar{\alpha}_{(j_3)}.$
By Theorem \ref{Thm:HKVgen}, the $\phi^{(k)} (e_\sigma \circ e_\sigma)$ are also permanent cycles. However, degree reasons make it impossible to identify them in terms of circle products (there are too many sign representations) and thus we have a new family of generators which are not circle products of elements in $K_1, K_\sigma, \, K_\rho, \,$ or $K_{2\sigma}.$ 

\begin{cor}
	We have 
	\begin{align*}
	H_\star K_{2\sigma + 1} \cong E[ & e_1 \circ e_\sigma \circ e_\sigma , \, e_1 \circ  e_\sigma \circ \bar{\alpha}_{(i)}, \, e_1 \circ \bar{\alpha}_{(j_1)} \circ \bar{\alpha}_{(j_2)}, \\
	& \bar{\beta}_{(j_1 + 1)} \circ \bar{\alpha}_{(j_2 + 1)}, \, \alpha_{(i_1)} \circ \bar{\alpha}_{(i_2)} \circ \bar{\alpha}_{(j_3)}, \, \phi^{(k)} (e_\sigma \circ e_\sigma)] 
	\end{align*}
	as an algebra where the coproduct follows by Hopf ring properties from the $\alpha_{(i)}$'s, $\beta_{(i)}$'s, $\bar{\alpha}_{(i)}$'s, $\bar{\beta}_{(i)}$'s and coproduct structure on $\textrm{Tor}_{*, \star}^{H K_{2\sigma}} ({H \umF_2}_\star, {H \umF_2}_\star).$
\end{cor}

\end{example}

As the number of sign representations in $V$ where $1 + \sigma \subset V$ increases, the number of additional generators grows, making bookkeeping and identifying homology generators in terms of the bar spectral sequence pairing \ref{barsspairing} an increasingly complicated task. 

\subsection{The $RO(C_2)$-graded twisted bar spectral sequence}

We now turn to the twisted analogue of the $RO(C_2)$-graded bar spectral sequence. Similar to the classical case, the twisted bar construction $B^\sigma A$ is filtered by 
$$ (B^\sigma A)^{[t]} \simeq \coprod_{t \geq n \geq 0} \Delta^n \times A^n / \sim \qquad \subset B^\sigma A$$
with associated graded pieces
$$\displaystyle{\nicefrac{(B^\sigma A)^{[t]}}{(B^\sigma A)^{[t-1]}} \simeq S^{\ceil*{\frac{t}{2}} \sigma + \floor*{\frac{t}{2}}} \wedge A^{\wedge t}}, $$
where the $C_2$-action on $A^t$ is given by $\gamma(a_1 \wedge \cdots \wedge a_n) = (\gamma a_n \wedge \cdots \wedge \gamma a_1).$ Applying $H_\star(-)$ to these filtered spaces gives the twisted bar spectral sequence
$$E_{t, \star}^1 = \tilde{H}_\star(S^{\ceil*{\frac{t}{2}} \sigma + \floor*{\frac{t}{2}}} \wedge A^t ) \Rightarrow H_\star B^\sigma A, $$
with differentials
$$d_r: E_{t, \star}^r \to E_{t - r, \star - 1}^r,$$ 
computing $H_\star(B^\sigma A).$ 

In general, this spectral sequence lacks an explicit $E^2$-page and can be difficult to compute. We give some readily computable examples which collapse on the $E^1$-page and then turn to analyzing the structure of thetwisted bar spectral sequence in examples computing the $RO(C_2)$-graded homology of $C_2$-equivariant Eilenberg-MacLane spaces. 

\begin{example}\label{twistbssExs} The $RO(C_2)$-graded twisted bar spectral sequences computing the homology of $B^\sigma \mF_2 \simeq K(\umF_2, \sigma) \simeq \mathbb{R} P_{tw}^\infty $  and $ B^\sigma S^1 \simeq K(\umZ, \rho) \simeq  \mathbb{C} P_{tw}^\infty$ collapse on the $E^1$-page. As rings,
	\begin{align*} 
		H_\star \mathbb{R} P_{tw}^\infty & = E[e_\sigma, \bar{\alpha}_{(0)}, \bar{\alpha}_{(1)}, \cdots ] = E[e_\sigma] \otimes \Gamma [ \bar{\alpha}_{(0)} ], \, \vert e_\sigma \vert = \sigma, \, \vert \bar{\alpha}_{(i)} \vert = \rho 2^i, \\
		H_\star \mathbb{C} P_{tw}^\infty & = E[ \bar{\beta}_{(0)}, \bar{\beta}_{(1)}, \cdots ] = \Gamma [ e_\rho ] \textrm{ where } \vert \bar{\beta}_{(i)} \vert = \rho 2^i. 
	\end{align*}
	We write the proof for $H_\star \mR P_{tw}^\infty$ as the computation for $H_\star \mC P_{tw}^\infty$ is similar. 
\end{example}

\begin{proof}
	We first prove the additive statement that $H_\star \mathbb{R}P^\infty_{tw}$ is a free $H_\star$-module with a single generator in each degree $\ceil{\frac{n}{2}} \sigma +  \floor{\frac{n}{2}}.$ We then show  $H_\star \mR P^\infty $ has ring structure $E[e_\sigma, \bar{\alpha}_{(0)}, \bar{\alpha}_{(1)}, \cdots ] = E[e_\sigma] \otimes \Gamma [ \bar{\alpha}_{(0)} ]$ where $\vert e_\sigma \vert = \sigma$ and $\vert \bar{\alpha}_{(i)} \vert = 2^i \rho. $ We start with the twisted bar spectral sequence 
	$$ E_{t, \star}^1 = \tilde{H}_\star(S^{\ceil*{\frac{t}{2}} \sigma + \floor*{\frac{t}{2}}} \wedge \mF_2^t) \Rightarrow H_\star B^\sigma \mF_2.$$
	Specifically, 
		\begin{align*}
			E^1_{t, \star} & \cong \tilde{H}_\star (\displaystyle{\nicefrac{B_t^\sigma \mathbb{F}_2}{B_{t-1}^\sigma}} \mathbb{F}_2 ) \\
			& \cong \tilde{H}_\star( S^{\ceil*{\frac{t}{2}} \sigma + \floor*{\frac{t}{2}}} \wedge \mathbb{F}_2^{\wedge t} ) \qquad \quad \qquad \qquad  \qquad \qquad \qquad  \qquad \quad \, \textrm{(by definition)} \\
			& \cong \tilde{H}_\star (S^{\ceil*{\frac{t}{2}} \sigma + \floor*{\frac{t}{2}}}) \otimes \tilde{H}_\star (\mathcal{N}_e^{C_2} (\mathbb{F}_2^{\wedge \floor*{\frac{t}{2}} } \wedge \mF_2^\epsilon )) \quad \textrm{(freeness \& properties of }\mathcal{N}_e^{C_2}) \\
			& \cong \tilde{H}_\star (S^{\ceil*{\frac{t}{2}} \sigma + \floor*{\frac{t}{2}}}) \otimes \tilde{H}_\star  (\mathbb{F}_2)^{\wedge t} \quad \textrm{(homology of norm of underlying free space)}
		\end{align*}
	where in the last step, since the homology of $\mF_2$ splits as the homology of induced representation spheres, the homology of the norm is the norm of the homology of the underlying space \cite{Hill2019}.
	
	Because the filtration degree $t$ corresponds to the topological degree $p$ and differentials $d^r$ shift topological degree down by one, there are no nonzero $d^r,$ $r > 1.$ There can be no nonzero $d^1$ because if there were, on passing to the nonequivariant homology of the underlying space, $H_* \mathbb{R} P^\infty,$ we would be killing a known generator which is a contradiction. Hence the homology is free with a single generator in each degree $\ceil{\frac{n}{2}} \sigma +  \floor{\frac{n}{2}}.$This $E^1$-page is depicted in Figure \ref{Fig:E1pageKsigma}.
	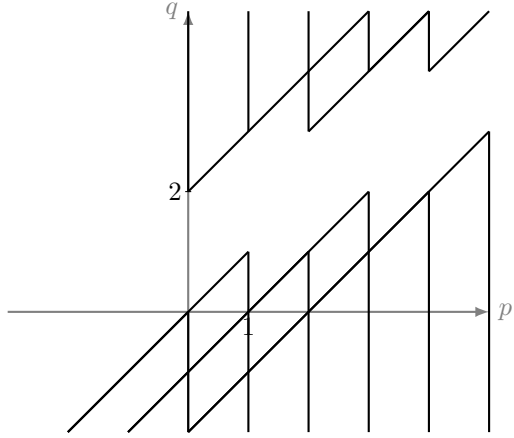
\begin{figure}[ht]	
		\begin{center}
			\begin{tikzpicture}[scale=0.8]
				
				\coordinate (Origin)   at (0,0);
				\coordinate (XAxisMin) at (-3,0);
				\coordinate (XAxisMax) at (5,0);
				\coordinate (YAxisMin) at (0,-2);
				\coordinate (YAxisMax) at (0,5);
				\draw [thick, gray,-latex] (XAxisMin) -- (XAxisMax) node[anchor=west] {$p$};
				\draw [thick, gray,-latex] (YAxisMin) -- (YAxisMax) node[anchor=east] {$q$};
				
				\draw (-0.05, 2) -- (0.05, 2) node[anchor=east, distance=0.5] {$2$};
				\draw (1, -0.05) -- (1, 0.05) node[anchor=north, distance=0.5] {$1$};
				
				\draw [thick] (0,2) -- (0,5);
				\draw [thick] (0,2) -- (3,5);
				\draw [thick] (0,0) -- (0,-2);
				\draw [thick] (0,0) -- (-2,-2);
				
				\draw [thick] (1,3) -- (1,5);
				\draw [thick] (1,1) -- (1,-2);
				\draw [thick] (1,1) -- (-2,-2);
				
				\draw [thick] (2,3) -- (2,5);
				\draw [thick] (2,3) -- (4,5);
				\draw [thick] (2,1) -- (-1,-2);
				\draw [thick] (2,1) -- (2,-2);
				
				\draw [thick] (3,4) -- (3,5);
				\draw [thick] (3,4) -- (4,5);
				\draw [thick] (3,2) -- (3,-2);
				\draw [thick] (3,2) -- (-1,-2);
				
				\draw [thick] (4,4) -- (4,5);
				\draw [thick] (4,4) -- (5,5);
				\draw [thick] (4,2) -- (4,-2);
				\draw [thick] (4,2) -- (0,-2);
				
				\draw [thick] (5,3) -- (5,-2);
				\draw [thick] (5,3) -- (0,-2);
				
			\end{tikzpicture}
			\caption{$E^1$-page of the twisted bar spectral sequence computing $H_\star K_\sigma$}
			\label{Fig:E1pageKsigma}
		\end{center}
	\end{figure}

	We deduce the multiplicative structure. There is no element in degree $2 \sigma$ so $e_\sigma$ must be exterior. The remaining exterior structure can also be deduced without appealing to Hopf rings. The multiplication by $2$ map $2: K_\sigma \to K_\sigma,$ which factors as the composition
	$$ K_\sigma \xrightarrow[]{\quad \Delta \quad} K_\sigma \times K_\sigma \xrightarrow[]{\quad * \quad} K_\sigma, $$
	is homotopically trivial, so
	$$0 = 2_\star: H_\star K_\sigma \to H_\star K_\sigma.$$
	Since $2_\star(\bar{\alpha}_{(i + 1)}) = \bar{\alpha}_{(i)} * \bar{\alpha}_{(i)},$ this proves the exterior multiplication. 
\end{proof}

\begin{theorem} \label{Thm:2sigma} The $RO(C_2)$-graded twisted bar spectral sequence computing the homology of $B^\sigma S^\sigma \simeq K(\umZ, 2 \sigma)$ collapses on the $E^1$-page. As a ring,
	\begin{align*}
		H_\star K(\underline{\mathbb{Z}}, 2 \sigma) & = E[ e_{2\sigma} ] \otimes \Gamma[ \bar{x}_{(0)}] \textrm{ where } \vert e_{2\sigma} \vert = 2\sigma, \, \vert \bar{x}_{(0)} \vert = 2 \rho.
	\end{align*} 
\end{theorem}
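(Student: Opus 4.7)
The plan is to apply the $RO(C_2)$-graded twisted bar spectral sequence with input $A = S^\sigma$, using the equivalence $B^\sigma S^\sigma \simeq K(\underline{\mathbb{Z}}, 2\sigma)$ from Liu. The overall structure mirrors the proof of Example \ref{twistbssExs}, but the input $S^\sigma$ changes the shape of the $E^1$-page.

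The first step is to identify the $E^1$-page. The twisted $C_2$-action $\gamma(x_1, \ldots, x_t) = (-x_t, \ldots, -x_1)$ on $(S^\sigma)^{\wedge t}$ pairs factors, and diagonalizing each pair via $(x,y) \mapsto ((x+y)/\sqrt{2}, (x-y)/\sqrt{2})$ gives $(S^\sigma)^{\wedge 2k} \simeq S^{k\rho}$ and $(S^\sigma)^{\wedge 2k+1} \simeq S^{k\rho + \sigma}$. Smashing with the simplex sphere $S^{\lceil t/2 \rceil \sigma + \lfloor t/2 \rfloor}$ yields
\[
E^1_{t, \star} \cong \Sigma^{V_t} H_\star, \qquad V_{2k} = 2k\rho, \quad V_{2k+1} = 2k\rho + 2\sigma,
\]
a rank-one free $H_\star$-module at each filtration level, with generator sitting at bigraded position $(p, q) = (2t, 2\lceil t/2 \rceil)$.

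Next I would show that all differentials vanish. Since each $E^1_{t, \star}$ is a rank-one free $H_\star$-module and each $d^r$ is $H_\star$-linear, it suffices to check vanishing on generators. The generator of filtration $t$ would be sent by $d^r$ to bigraded position $(2t - 1, 2\lceil t/2 \rceil)$ in filtration $t - r$, and this position lies outside both cones of the $H_\star$-module anchored at $(2(t-r), 2\lceil (t-r)/2 \rceil)$: the positive-cone bound $p \leq p_{\text{target}}$ requires $2t - 1 \leq 2(t-r)$, impossible for $r \geq 1$, while the negative-cone bound reduces to $2\lceil t/2 \rceil - 2\lceil (t-r)/2 \rceil \geq 2r + 1$, which fails because the left-hand side is at most $r + 1$. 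Therefore $E^\infty = E^1$, and the additive structure $H_\star K(\underline{\mathbb{Z}}, 2\sigma) \cong \bigoplus_{t \geq 0} \Sigma^{V_t} H_\star$ matches the claimed basis $\{e_{2\sigma}^\epsilon \gamma_n(\bar{x}_{(0)}) : \epsilon \in \{0,1\}, n \geq 0\}$ of $E[e_{2\sigma}] \otimes \Gamma[\bar{x}_{(0)}]$, where the filtration-$2k$ generator is identified with $\gamma_k(\bar{x}_{(0)})$ and the filtration-$(2k+1)$ generator with $e_{2\sigma} \cdot \gamma_k(\bar{x}_{(0)})$.

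For the ring structure, the twisted bar spectral sequence is multiplicative via the $H$-space structure on $B^\sigma S^\sigma$, and the Pontryagin product on the associated graded is computed by the shuffle formula. This gives $e_{2\sigma} \cdot e_{2\sigma} = 2[x \mid x] \equiv 0$ and $\bar{x}_{(i)} \cdot \bar{x}_{(i)} = \binom{2^{i+1}}{2^i} \bar{x}_{(i+1)} \equiv 0 \pmod 2$ at the $E^\infty$-level, so each square lies in strictly lower bar filtration than its natural home. A direct filtration check — for instance, at bidegree $(4, 4)$ where $e_{2\sigma}^2$ a priori could equal $\theta \bar{x}_{(0)}$ — shows that no contributing class sits in filtration $< 2$, which forces $e_{2\sigma}^2 = 0$; the analogous check at bidegree $(2^{i+2}, 2^{i+1})$ gives $\bar{x}_{(i)}^2 = 0$. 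The main subtlety to watch is precisely this last step: the shuffle computation only determines the product in the associated graded, and one must rule out multiplicative extensions reaching negative-cone classes in lower filtration, which is where the sparseness of the cones in filtration $< t$ becomes essential.
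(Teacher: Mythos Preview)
Your proof is correct and follows the same overall strategy the paper gestures at (``analogous to Example~\ref{twistbssExs}''), namely identifying the $E^1$-page as a single free $H_\star$-module in each filtration and then running a gap argument. Your cone analysis is in fact a more careful version of what the paper sketches: for $A=\umF_2$ the paper can get away with the slogan ``filtration $t$ equals topological degree $p$,'' but for $A=S^\sigma$ the generator sits at $p=2t$ and the double cones overlap in $p$, so one really does need to check both cones as you do.

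One point worth flagging: the multiplicative argument in Example~\ref{twistbssExs} uses that the multiplication-by-$2$ map on $K(\umF_2,\sigma)$ is null, and that argument does \emph{not} transport to $K(\umZ,2\sigma)$ since multiplication by $2$ on $\umZ$ is nontrivial. Likewise, the naive degree check ``there is no class in degree $4\sigma$'' fails here because $\theta\cdot\bar{x}_{(0)}$ lives in exactly that bidegree. Your filtration argument---that $e_{2\sigma}^{*2}$ vanishes in the associated graded at filtration $2$ and that filtrations $0,1$ contain nothing in bidegree $(4,4)$---is the right fix, and similarly for $\bar{x}_{(i)}^{*2}$. So your approach to the ring structure is not merely an alternative to the paper's sketch but is genuinely what is required once one moves from $\umF_2$ to $\umZ$ coefficients.
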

\noindent The proof of Theorem \ref{Thm:2sigma} is analogous to the computation of $H_\star \mR P_{tw}^\infty$ given in Example \ref{twistbssExs}. 

\subsection{Higher differentials in the $RO(C_2)$-graded twisted bar spectral sequence}\label{Subsec:highertwdiffs}

In this section, we use our understanding of $H_\star K_{* \sigma}$ to analyze the structure of the twisted bar spectral sequence and find evidence of arbitrarily long equivariant degree shifting differentials.

Consider the $RO(C_2)$-graded twisted bar spectral sequence
$$ E_{t, \star}^1 = \tilde{H}_\star(S^{\ceil*{\frac{t}{2}} \sigma + \floor*{\frac{t}{2}}} \wedge K_\sigma^{\wedge t} ) \Rightarrow H_\star B^\sigma K_\sigma,$$
computing $H_\star K_{2 \sigma}.$ There are two basic building blocks in this spectral sequence. Twisted bar representatives $[ z_1 \, | \, \cdots \, | \, z_n ],$ $z \in H_\star K_\sigma$ that are fixed under the $C_2$-action of the twisted bar construction and those that possess non-trivial $C_2$-action. The twisted bar representatives which are fixed support a full double cone, that is an $RO(C_2)$-graded representation degree shifted copy of the homology of the point. An example where $| z_1 | = \sigma$ and $| z_2 | = \rho$ is shown in Figure \ref{Fig:twbssdoublec}.
\begin{figure}
	\begin{center}
		\begin{tikzpicture}[scale=0.5]
			
			\coordinate (Origin)   at (0,0);
			\coordinate (XAxisMin) at (-4,0);
			\coordinate (XAxisMax) at (12,0);
			\coordinate (YAxisMin) at (0,-2);
			\coordinate (YAxisMax) at (0,9);
			\draw [thick, gray,-latex] (XAxisMin) -- (XAxisMax) node[anchor=west] {$p$};
			\draw [thick, gray,-latex] (YAxisMin) -- (YAxisMax) node[anchor=east] {$q$};
			
			\foreach \y in {-1,0,1,2,3,4,5,6,7,8}\draw [gray] (-0.1, \y) -- (0.1, \y) node[anchor=east] {$\y$};
			
			\foreach \x in {--3,-2,-1,1,2,3,4,5,6,7,8,9,10,11,12}\draw [gray] (\x, -0.1) -- (\x, 0.1) node[anchor=north] {$\x$};
			
			\draw [line width = 0.5mm, blue] (4,-2) -- (4,3);
			\draw [line width = 0.5mm, blue] (-1,-2) -- (4,3);
			\node [] (e) at (4, -2.5) {\color{blue} $[z_1] $ \color{black} } ;
			\draw [line width = 0.5mm, blue] (4,5) -- (4,9);
			\draw [line width = 0.5mm, blue] (4,5) -- (8,9);
			
			\draw [line width = 0.5mm, blue] (8,-2) -- (8,4);
			\draw [line width = 0.5mm, blue] (2,-2) -- (8,4);
			\node [] (e) at (8, -2.5) {\color{blue} $[z_2 \vert z_2]$ \color{black} } ;
			\draw [line width = 0.5mm, blue] (8,6) -- (8,9);
			\draw [line width = 0.5mm, blue] (8,6) -- (11,9);
			
			\draw [line width = 0.5mm, blue] (10,-2) -- (10,5);
			\draw [line width = 0.5mm, blue] (3,-2) -- (10,5);
			\node [] (e) at (12.5, 4.5) {\color{blue} $[z_1 \vert z_2 \vert z_2 \vert z_1]$ \color{black} } ;
			\draw [line width = 0.5mm, blue] (10,7) -- (10,9);
			\draw [line width = 0.5mm, blue] (10,7) -- (12,9); 
		\end{tikzpicture}
		\caption{Twisted bar representatives fixed under the $C_2$-action support full double cones}
		\label{Fig:twbssdoublec}
	\end{center}
\end{figure}
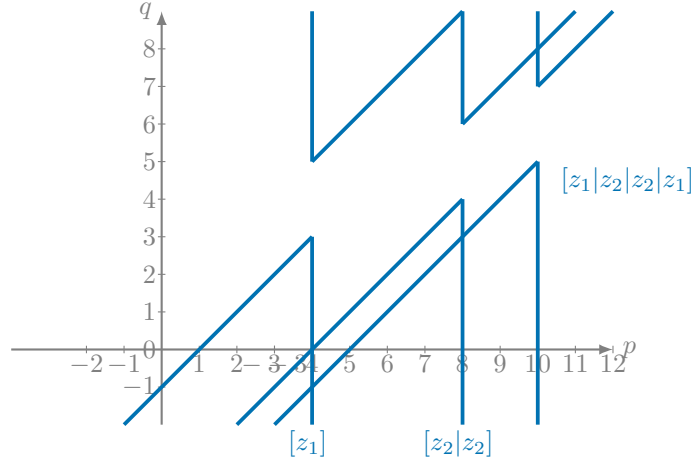
Let $\gamma$ denote the generator of $C_2.$ The remaining twisted bar representatives come in pairs $[ z_1 \, | \, \cdots \, | \, z_n ]$ and $\gamma \cdot [ z_1 \, | \, \cdots \, | \, z_n ].$ Each pair gives a copy of ${C_2}_+$ and we choose a single twisted bar representative to represent each copy. In the twisted bar spectral sequence, the representatives $[ z_1 \, | \, \cdots \, | \, z_n ]$ with non-trivial $C_2$-action support shifted degree copies of $H_\star {C_2}_+$ as depicted in Figure \ref{Fig:yellow}.
\begin{figure}[ht] 
	\begin{center}
		\begin{tikzpicture}[scale=0.5]
			
			\coordinate (Origin)   at (0,0);
			\coordinate (XAxisMin) at (-4,0);
			\coordinate (XAxisMax) at (12,0);
			\coordinate (YAxisMin) at (0,-2);
			\coordinate (YAxisMax) at (0,9);
			\draw [thick, gray,-latex] (XAxisMin) -- (XAxisMax) node[anchor=west] {$p$};
			\draw [thick, gray,-latex] (YAxisMin) -- (YAxisMax) node[anchor=east] {$q$};
			
			\foreach \y in {-1,0,1,2,3,4,5,6,7,8}\draw [gray] (-0.1, \y) -- (0.1, \y) node[anchor=east] {$\y$};
			
			\foreach \x in {--3,-2,-1,1,2,3,4,5,6,7,8,9,10,11,12}\draw [gray] (\x, -0.1) -- (\x, 0.1) node[anchor=north] {$\x$};
			
			\draw [line width = 0.5mm, yellow] (5,-2) -- (5,9);
			\node [] (f) at (5, -3.5) {\color{yellow} \large $[z_1 \vert z_2]$ \color{black}}; 
			
			\draw [line width = 0.5mm, yellow] (8.9,-2) -- (8.9,9);
			\draw [line width = 0.5mm, yellow] (9.1,-2) -- (9.1,9);
			\node [] (g) at (9, -3.5) {\color{yellow} \large $[z_1 \vert z_2 \vert z_1 z_2]$ \color{black}}; 
			\node [] (g) at (9, -4.5) {\color{yellow} \large $[z_2 \vert z_1 \vert z_1 z_2]$ \color{black}}; 
			
		\end{tikzpicture}
		\caption{Twisted bar representatives with nontrivial $C_2$ action support copies of $H_\star {C_2}_+.$}
		\label{Fig:yellow}
	\end{center} 
\end{figure}
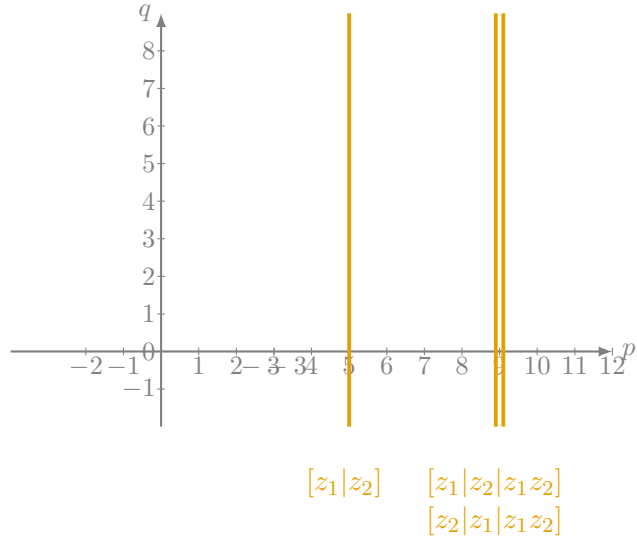

A portion of the twisted bar spectral sequence computing $H_\star K_{2\sigma}$ appears in Figure \ref{fig:2sigmaDiffs}, where $x$ represents $e_\sigma$ and $y$ represents $\bar{\alpha}_{(0)}.$
\begin{figure}[ht] 
	\begin{center}
		\begin{tikzpicture}[scale=0.5]
			
			\coordinate (Origin)   at (0,0);
			\coordinate (XAxisMin) at (-4,0);
			\coordinate (XAxisMax) at (12,0);
			\coordinate (YAxisMin) at (0,-2);
			\coordinate (YAxisMax) at (0,9);
			\draw [thick, gray,-latex] (XAxisMin) -- (XAxisMax) node[anchor=west] {$p$};
			\draw [thick, gray,-latex] (YAxisMin) -- (YAxisMax) node[anchor=east] {$q$};
			
			\foreach \y in {-1,0,1,2,3,4,5,6,7,8}\draw [gray] (-0.1, \y) -- (0.1, \y) node[anchor=east] {$\y$};
			
			\foreach \x in {--3,-2,-1,1,2,3,4,5,6,7,8,9,10,11,12}\draw [gray] (\x, -0.1) -- (\x, 0.1) node[anchor=north] {$\x$};
			
			\draw [line width = 0.5mm, blue] (4,-2) -- (4,3);
			\draw [line width = 0.5mm, blue] (-1,-2) -- (4,3);
			\node [] (e) at (4, -2.5) {\color{blue} $[xy] $ \color{black} } ;
			\draw [line width = 0.5mm, blue] (4,5) -- (4,9);
			\draw [line width = 0.5mm, blue] (4,5) -- (8,9);
			
			\draw [line width = 0.5 mm, green, dashed] (4,3) -- (5,4);
			\node[circle,draw, green, line width=0.5 mm, fill=white, outer sep=0pt, inner sep=1.5pt] (c) at (5,4){}; 
			\node [] (b) at (3.25,4) { \color{green} $[x] * [y]$ \color{black}};
			\draw [-stealth, green, line width = 0.5 mm] (5,5) -- (4,5);
			\draw [-stealth, green, line width = 0.5 mm] (5,6) -- (4,6);
			\draw [-stealth, green, line width = 0.5 mm] (5,7) -- (4,7);
			\draw [-stealth, green, line width = 0.5 mm] (5,8) -- (4,8);
			\node [] (a) at (4.5, 8.5) {\color{green} \large $\mathbf{d_1} $ \color{black}}; 
			
			\draw [line width = 0.5mm, yellow] (5,-2) -- (5,9);
			\node [] (f) at (5, -3.5) {\color{yellow} \large $[x \vert y]$ \color{black}}; 
			
			\draw [line width = 0.5mm, blue] (8,-2) -- (8,4);
			\draw [line width = 0.5mm, blue] (2,-2) -- (8,4);
			\node [] (e) at (8, -2.5) {\color{blue} $[xy \vert xy]$ \color{black} } ;
			\draw [line width = 0.5mm, blue] (8,6) -- (8,9);
			\draw [line width = 0.5mm, blue] (8,6) -- (11,9);
			
			\draw [line width = 0.5mm, yellow] (8.9,-2) -- (8.9,9);
			\draw [line width = 0.5mm, yellow] (9.1,-2) -- (9.1,9);
			\node [] (g) at (9, -3.5) {\color{yellow} \large $[x \vert y \vert xy]$ \color{black}}; 
			\node [] (g) at (9, -4.5) {\color{yellow} \large $[y \vert x \vert xy]$ \color{black}}; 
			
			\draw [line width = 0.5 mm, green, dashed] (8,4) -- (9,5);
			\node[circle,draw, green, line width=0.5 mm, fill=white, outer sep=0pt, inner sep=2pt] (c) at (9,5){}; 
			\draw [-stealth, green, line width = 0.5 mm] (8.9,5.9) -- (8,5.9);
			\draw [-stealth, green, line width = 0.5 mm] (9.1,6.1) -- (8,6.1);
			\draw [-stealth, green, line width = 0.5 mm] (8.9,6.9) -- (8,6.9);
			\draw [-stealth, green, line width = 0.5 mm] (9.1,7.1) -- (8,7.1);
			\draw [-stealth, green, line width = 0.5 mm] (8.9,7.9) -- (8,7.9);
			\draw [-stealth, green, line width = 0.5 mm] (9.1,8.1) -- (8,8.1);
			\node [] (a) at (8.75, 8.5) {\color{green} \large $\mathbf{d_1} $ \color{black}}; 
			
			\draw [line width = 0.5mm, blue] (10,-2) -- (10,5);
			\draw [line width = 0.5mm, blue] (3,-2) -- (10,5);
			\node [] (e) at (10, -5.5) {\color{blue} $[x \vert y \vert y \vert x]$ \color{black} } ;
			\draw [line width = 0.5mm, blue] (10,7) -- (10,9);
			\draw [line width = 0.5mm, blue] (10,7) -- (12,9); 
			
			\draw [-stealth, green, line width = 0.5 mm] (10, 5) -- (9, 5);
			\node [] (a) at (9.75, 5.5) {\color{green} \large $\mathbf{d_1} $ \color{black}}; 
			
			\draw [-stealth, rose, line width = 0.5 mm] (9,4) -- (8,4);
			\draw [-stealth, rose, line width = 0.5 mm] (8,3) -- (7,3);
			\draw [-stealth, rose, line width = 0.5 mm] (7,2) -- (6,2);
			\node [] (a) at (7, 2.5) {\color{rose} \large $\mathbf{d_2} $ \color{black}}; 
			\draw [-stealth, rose, line width = 0.5 mm] (6,1) -- (5,1);
			\draw [-stealth, rose, line width = 0.5 mm] (5,0) -- (4,0);
			\draw [-stealth, rose, line width = 0.5 mm] (4, -1) -- (3, -1);
			
		\end{tikzpicture}
		\caption{Differentials in the twisted bar spectral sequence computing $H_\star K_{2 \sigma}$}
		\label{fig:2sigmaDiffs}
	\end{center} 
\end{figure}
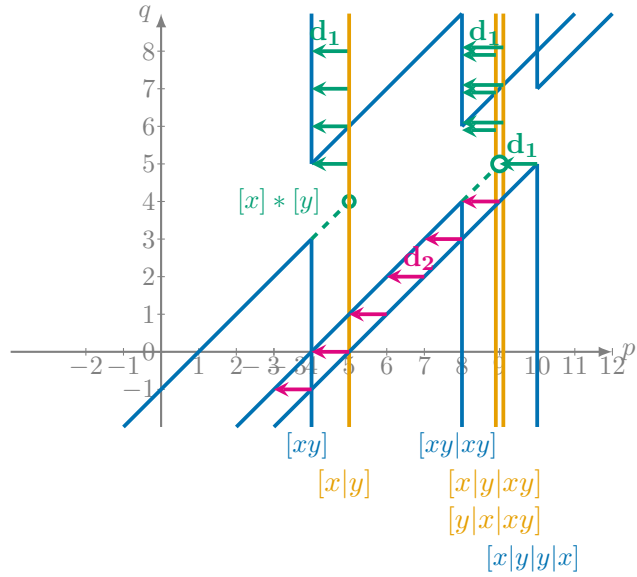 To compute the $d_1$-differential in this spectral sequence, consider the cofiber sequence
$$S^0 \xrightarrow[]{\quad a \quad} S^\sigma \xrightarrow{\quad \quad} Ca \simeq \Sigma {C_2}_+.$$
This induces a long exact sequence in homology involving
 $$H_\star S^0 \xrightarrow[]{\quad \cdot a \quad} H_\star S^\sigma \xrightarrow{\quad \quad} H_\star ({C_2}_+) $$ as shown in Figure \ref{Fig:d1wExt}.
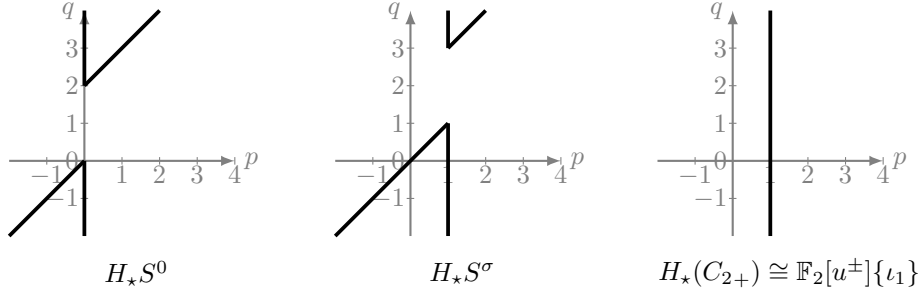
\begin{figure}[ht]
	\begin{center}
		\begin{multicols}{3}
			\begin{tikzpicture}[scale=0.5]
				
				\coordinate (Origin)   at (0,0);
				\coordinate (XAxisMin) at (-2,0);
				\coordinate (XAxisMax) at (4,0);
				\coordinate (YAxisMin) at (0,-2);
				\coordinate (YAxisMax) at (0,4);
				\draw [thick, gray,-latex] (XAxisMin) -- (XAxisMax) node[anchor=west] {$p$};
				\draw [thick, gray,-latex] (YAxisMin) -- (YAxisMax) node[anchor=east] {$q$};
				
				\foreach \y in {-1,0,1,2,3}\draw [gray] (-0.1, \y) -- (0.1, \y) node[anchor=east] {$\y$};
				
				\foreach \x in {-1,1,2,3,4}\draw [gray] (\x, -0.1) -- (\x, 0.1) node[anchor=north] {$\x$};
				
				\draw [line width = 0.5mm] (0,-2) -- (0,0);
				\draw [line width = 0.5mm] (-2,-2) -- (0,0);
				\node [] (a) at (1.5, -3) {$H_\star S^0$ \color{black} } ;
				\draw [line width = 0.5mm] (0,2) -- (0,4);
				\draw [line width = 0.5mm] (0,2) -- (2,4);
				
			\end{tikzpicture}
		
			\begin{tikzpicture}[scale=0.5]
				
				\coordinate (Origin)   at (0,0);
				\coordinate (XAxisMin) at (-2,0);
				\coordinate (XAxisMax) at (4,0);
				\coordinate (YAxisMin) at (0,-2);
				\coordinate (YAxisMax) at (0,4);
				\draw [thick, gray,-latex] (XAxisMin) -- (XAxisMax) node[anchor=west] {$p$};
				\draw [thick, gray,-latex] (YAxisMin) -- (YAxisMax) node[anchor=east] {$q$};
				
				\foreach \y in {-1,0,1,2,3}\draw [gray] (-0.1, \y) -- (0.1, \y) node[anchor=east] {$\y$};
				
				\foreach \x in {-1,1,2,3,4}\draw [gray] (\x, -0.1) -- (\x, 0.1) node[anchor=north] {$\x$};
				
				\draw [line width = 0.5mm] (1,-2) -- (1,1);
				\draw [line width = 0.5mm] (-2,-2) -- (1,1);
				\node [] (a) at (1.5, -3) {$H_\star S^\sigma$ \color{black} } ;
				\draw [line width = 0.5mm] (1,3) -- (1,4);
				\draw [line width = 0.5mm] (1,3) -- (2,4);
				
			\end{tikzpicture}
		
			\begin{tikzpicture}[scale=0.5]
				
				\coordinate (Origin)   at (0,0);
				\coordinate (XAxisMin) at (-2,0);
				\coordinate (XAxisMax) at (4,0);
				\coordinate (YAxisMin) at (0,-2);
				\coordinate (YAxisMax) at (0,4);
				\draw [thick, gray,-latex] (XAxisMin) -- (XAxisMax) node[anchor=west] {$p$};
				\draw [thick, gray,-latex] (YAxisMin) -- (YAxisMax) node[anchor=east] {$q$};
				
				\foreach \y in {-1,0,1,2,3}\draw [gray] (-0.1, \y) -- (0.1, \y) node[anchor=east] {$\y$};
				
				\foreach \x in {-1,1,2,3,4}\draw [gray] (\x, -0.1) -- (\x, 0.1) node[anchor=north] {$\x$};
				
				\draw [line width = 0.5mm] (1,-2) -- (1,4);
				\node [] (a) at (1.5, -3) {$H_\star ({C_2}_+) \cong \mF_2 [ u ^\pm] \{\iota_1\}$} ;
			\end{tikzpicture}
		\end{multicols}
		\caption{Computing a $d_1$-differential in the twisted bar spectral sequence} 
		\label{Fig:d1wExt}
	\end{center}
\end{figure}
The map
$$H_\star({C_2}_+) \to H_\star (S^{\sigma - 1})$$ 
is the map depicted in Figure \ref{Fig:d1diff}.
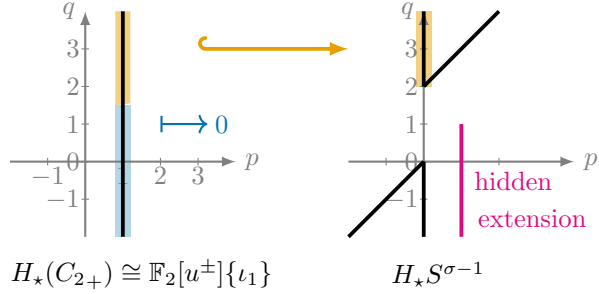
\begin{figure}[ht]
	\begin{center}
		\begin{tikzpicture}[scale=0.5]
			\draw[fill=yellow!50, draw = none]  (0.8,1.5) -- (0.8,4) -- (1.2,4) -- (1.2,1.55);
			\draw[fill=yellow!50, draw = none]  (8.8,2) -- (8.8,4) -- (9.2,4) -- (9.2,2);
			\draw [line width = 0.5mm, right hook-latex, yellow] (3, 3) -- (7, 3); 
			
			\draw[fill=blue!30, draw = none]  (0.8,1.5) -- (0.8,-2) -- (1.2,-2) -- (1.2,1.5);
			\draw[|->, thick, blue] (2,1) -- (3.25,1);
			\node (b) at (3.75,1) {\color{blue} $0$ \color{black}};

			\coordinate (Origin)   at (0,0);
			\coordinate (XAxisMin) at (-2,0);
			\coordinate (XAxisMax) at (4,0);
			\coordinate (YAxisMin) at (0,-2);
			\coordinate (YAxisMax) at (0,4);
			\draw [thick, gray,-latex] (XAxisMin) -- (XAxisMax) node[anchor=west] {$p$};
			\draw [thick, gray,-latex] (YAxisMin) -- (YAxisMax) node[anchor=east] {$q$};
			
			\foreach \y in {-1,0,1,2,3}\draw [gray] (-0.1, \y) -- (0.1, \y) node[anchor=east] {$\y$};
			
			\foreach \x in {-1,1,2,3}\draw [gray] (\x, -0.1) -- (\x, 0.1) node[anchor=north] {$\x$};
			
			\draw [line width = 0.5mm] (1,-2) -- (1,4);
			\node [] (a) at (1.5, -3) {$H_\star ({C_2}_+) \cong \mF_2 [ u ^\pm] \{\iota_1\}$} ;
			
			\coordinate (Origin)   at (9,0);
			\coordinate (XAxisMin) at (7,0);
			\coordinate (XAxisMax) at (13,0);
			\coordinate (YAxisMin) at (9,-2);
			\coordinate (YAxisMax) at (9,4);
			\draw [thick, gray,-latex] (XAxisMin) -- (XAxisMax) node[anchor=west] {$p$};
			\draw [thick, gray,-latex] (YAxisMin) -- (YAxisMax) node[anchor=east] {$q$};
			
			\foreach \y in {-1,0,1,2,3}\draw [gray] (8.9, \y) -- (9.1, \y) node[anchor=east] {$\y$};
			
			\foreach \x in {8,9,10,11}\draw [gray] (\x, -0.1) -- (\x, 0.1) node[anchor=north] {};
			
			\draw [line width = 0.5mm] (9,-2) -- (9,0);
			\draw [line width = 0.5mm] (7,-2) -- (9,0);
			\node [] (a) at (9.5, -3) {$H_\star S^{\sigma - 1}$ \color{black} } ;
			\draw [line width = 0.5mm] (9,2) -- (9,4);
			\draw [line width = 0.5mm] (9,2) -- (11,4);
			
			\draw [line width = 0.5mm, rose] (10,1) -- (10,-2);
			\node [] (a) at (11.5, -0.5) {\color{rose} hidden \color{black} } ;
			\node [] (a) at (12, -1.5) {\color{rose} extension \color{black} } ;
			
		\end{tikzpicture}
		\caption{$RO(C_2)$-graded twisted bar spectral sequence $d_1$-differential with hidden extension}
		\label{Fig:d1diff}
	\end{center}
\end{figure}

We have shown that the $d_1$-differentials marked in green in Figure \ref{fig:2sigmaDiffs} both exist and have the behavior of the map in Figure \ref{Fig:d1wExt}. We also know from Theorem \ref{Thm:signed} that 
$$ H_\star K_{2 \sigma} \cong E[e_{2 \sigma}, \bar{\alpha}_{(j_1)} \circ \bar{\alpha}_{(j_2)} ]$$
where $j_1 \leq j_2$. 

Since the $RO(C_2)$-graded homology of $K_{2\sigma}$ is free over the $RO(C_2)$-graded homology of a point, all copies of $H_\star {C_2}_+$ appearing on the $E^1$-page must either be killed off or used in shifting the representation degree of the $RO(C_2)$-graded homology of a point, similar to the equivariant degree shifting differential $d_1$ and hidden extension of Figure \ref{Fig:d1diff}. 

We also know the underlying integer graded homology of $K_2,$ and have both the forgetful map $H_\star K_{2 \sigma} \to H_2 K_2$ and the fixed point map $H_\star K_{2 \sigma} \to H_* (K_{2\sigma})^{C_2} \cong H_* (K_2 \times K_1 \times K_0).$ Given that $H_\star K_{2 \sigma}$ is free and in the underlying nonequivariant case $[xy \, \vert \, xy]$ is killed by a $d_1$ differential (all generators of $H_\star K_{2 \sigma}$ have nontrivial underlying homology), the entire double cone supported by the twisted bar representative $[xy \, \vert \, xy ]$ must be hit by a differential. 

There is a $d_1$-differential and hidden extension shifting the double cone supported by $[ xy \, | \, xy]$ up by representation degree $\sigma$ so that by the $E^2$-page the double cone is in fact in representation degree $\rho ( \vert x \vert +  \vert y \vert) + \rho + \sigma = \rho (\rho + \sigma) + \rho + \sigma = 4 \rho + \sigma$. We hypothesize there is a $d_2$-differential induced by a $d_1$-differential supported by $[x \, \vert \, y \,  \vert \, y  \, \vert \, x].$ We notice that $[x \, \vert \, y \, \vert \, y \, \vert \, x]$ is a  norm of $[xy \, \vert \, xy].$ We expect such norms play an important role in governing the structure of all the higher nontrivial differentials.

As one goes farther along in the spectral sequence, considering cycles supported by twisted bar representatives such as $[xy \, \vert \, xy \, \vert \, xy]$ and $[xyz \, \vert \, xyz],$ which must all be killed off in order to recover the correct underlying homology, we see that arbitrarily long equivariant degree shifting differentials are required in order to arrive at the answer given by Theorem \ref{Thm:signed}. We conjecture all such cycles are killed by differentials induced by a norm structure on the twisted bar spectral sequence. 

\section{Related questions}\label{sec:futurdir}

We describe a few questions of immediate interest given the results of this paper. 

\subsection{Twisted Tor and the $RO(C_2)$-graded twisted bar spectral sequence}
In the $C_2$-equivariant setting, the $RO(C_2)$-graded homology of each signed delooping, $K_{V + \sigma},$ of an equivariant Eilenberg-MacLane space, $K_{V},$ also independently arises as the result of a $C_2$-equivariant twisted Tor computation. This can be seen by taking the model of $\sigma$-delooping defined in \cite{Hill2019}. In this model, $A$ is an $E_\sigma$-algebra and
$$B^\sigma(A) = B(A, \text{Map}(C_2, A), \text{Map}(C_2, *)), $$
where the action of $\text{Map}(C_2,A)$ on $A$ is via the $E_\sigma$-structure (\cite{Hill2019}, Definition 5.10). In the case that $A$ has $R$-free homology, Hill constructs yet another twisted bar spectral sequence with $E^2$-page
$$E_2^{s, \underline{\star}} = \text{Tor}_{-s}^{N_e^{C_2}(i_e^* R_*(i_e^* A))} (R_{\underline{\star}} (\text{Map}(C_2, X)), R_{\underline{\star}} (A) ) \Rightarrow R_{\underline{\star} - s} (B^\sigma (A)) $$
(\cite{Hill2019}, Theorem 5.11). Computations with this spectral sequence are complicated and the literature lacks substantial examples. However, it does have a twisted Tor functor as its $E^2$-page and thus it would be interesting to compare with our computations. 

One notable feature of the nonequivariant computation of $H_* K(\mF_p, *)$ is that the integer graded bar spectral sequences collapse on the $E^2$-page \cite{Wilson1982}. In contrast, we saw that the $RO(C_2)$-graded twisted bar spectral sequences computing $H_\star K_{* \sigma}$ have arbitrarily long differentials in Section \ref{Subsec:highertwdiffs}. Thus under favorable circumstances, we hope to formulate a twisted bar spectral sequence with $E^2$-page a twisted Tor functor arising as a derived functor of the twisted product of $H \umF_2$-modules, which collapses in the relevant cases of $H_\star K_{*\sigma}.$ 

Given our computation of $H_\star K_{* \sigma},$ such a twisted Tor over an exterior algebra should have the property that
$$\text{Tor}_{tw}^{E[x]} \cong E[\sigma x] \otimes \Gamma [\cN_e^{C_2} x].$$ 

\subsection{Global Hopf rings}

In their work computing the integer graded homology of classical nonequivariant Eilenberg-MacLane spaces, Ravenel and Wilson obtain a global statement. Specifically,
\begin{thm}[Ravenel-Wilson \cite{Wilson1982}]
	$H_* K_*$ is the free Hopf ring on $H_* K_0 = H_* [\mF_p],$ $H_* K_1,$ and $H_* \mC P^\infty \subset H_* K_2$ subject to the relation that $e_1 \circ e_1 = \beta_1.$ 
\end{thm}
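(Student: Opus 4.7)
The plan is to deduce this global Hopf ring statement directly from the additive/multiplicative computations of Theorems~\ref{Thm:RWpodd} and~\ref{Thm:RWp2}, which already give an explicit basis and algebra structure for $H_* K_*$. First, I would construct via the universal property of the free Hopf ring a Hopf ring map $\Phi \colon F \to H_* K_*$, where $F$ is the free Hopf ring on $H_* K_0$, $H_* K_1$, and $H_* \mathbb{C} P^\infty$ modulo the relation $e_1 \circ e_1 = \beta_1$. The relation is substantive only at $p=2$; at odd primes $e_1 \circ e_1 = 0$ for degree/sign reasons, and the $\beta_{(i)}$ must still be supplied through the generator $H_*\mathbb{C}P^\infty$. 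That $\Phi$ is well-defined reduces to checking that the inclusions $H_* K_0 \hookrightarrow H_* K_*$, $H_* K_1 \hookrightarrow H_* K_*$, and $H_* \mathbb{C} P^\infty \hookrightarrow H_* K_2 \hookrightarrow H_* K_*$ are compatible with $*$, $\circ$, and the coproduct (immediate), together with verifying the relation $e_1 \circ e_1 = \beta_1$ in $H_2 K_2$ at $p=2$: both sides are primitive elements of $H_2 K_2$, and their images in the underlying spaces and under the coproduct agree by the computations recalled in Theorem~\ref{Thm:RWp2}.

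Next I would establish surjectivity of $\Phi$. At $p=2$, Theorem~\ref{Thm:RWp2} expresses every basis element $(e_1 \alpha)^I = e_1^{\circ i_{-1}} \circ \alpha_{(0)}^{\circ i_0} \circ \alpha_{(1)}^{\circ i_1} \circ \cdots$ as a $\circ$-monomial of elements of $H_* K_1$, so it lies in the image. At odd primes, Theorem~\ref{Thm:RWpodd} writes every basis element as $e_1 \circ \alpha_I \circ \beta^J$ or $\alpha_I \circ \beta^J$, a $\circ$-monomial of elements of $H_* K_1$ and $H_* \mathbb{C} P^\infty$. Arbitrary elements of $H_* K_*$ are then recovered as $*$-polynomials in such $\circ$-monomials via the distributive law \eqref{eq:distlaw}, giving surjectivity of $\Phi$.

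For injectivity, the strategy is to produce an explicit $H_*$-linear basis of $F$ and match it bijectively with the Ravenel--Wilson basis of $H_* K_*$. Using the distributive law, every element of $F$ rewrites as a $*$-sum of $\circ$-monomials in the chosen Hopf algebra generators. The graded commutativity of $\circ$ allows one to order each $\circ$-monomial canonically, and the Hopf algebra relations within $H_* K_0$, $H_* K_1$, and $H_* \mathbb{C} P^\infty$ cut these $\circ$-monomials down to products of the indecomposables $e_1, \alpha_{(i)}, \beta_{(i)}$; at $p=2$ the imposed relation further eliminates every appearance of $\beta_1$ in favor of $e_1^{\circ 2}$. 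A rank/Poincaré-series comparison between the resulting spanning set of $F$ and the explicit bases of Theorems~\ref{Thm:RWpodd} and~\ref{Thm:RWp2} then forces the spanning set to be a basis, and $\Phi$ to be an isomorphism.

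The main obstacle is making ``free Hopf ring on a collection of Hopf algebras modulo a relation'' fully rigorous and proving that no hidden relations among $\circ$-monomials can be forced beyond the distributive law, the internal Hopf algebra relations of the input, and the single relation $e_1 \circ e_1 = \beta_1$. Concretely, one must argue that the iterated coproducts $H_* K_0 \times \cdots \times H_* K_0 \to H_* K_n$ coming from the ring spectrum structure do not impose further identifications among $*,\circ$-words; this is where the explicit Ravenel--Wilson bases are indispensable, since they furnish a target whose size determines $F$ exactly and rules out extra collapse.
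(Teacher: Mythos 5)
The paper does not actually prove this statement: it is quoted from Ravenel--Wilson \cite{Wilson1982} in the final section purely as motivation for the question of a $C_2$-equivariant analogue, so there is no internal proof to compare against. Your outline does follow the route Ravenel and Wilson take: the real content is the explicit computation of $H_*K_*$ (Theorems \ref{Thm:RWpodd} and \ref{Thm:RWp2}), and the global statement is then a comparison of that answer with the free Hopf ring on the given coalgebras. The construction of $\Phi$ and the surjectivity argument are fine as you describe them.

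The genuine gap is the injectivity step, which you flag as ``the main obstacle'' but do not close, and it is exactly where the content of the theorem lives. Two points in particular. First, your Poincar\'e-series comparison needs an a priori upper bound on $F$, i.e.\ the structure theorem for the free Hopf ring on a collection of coalgebras: one must show the $*$-algebra generated by each $\circ$-monomial is exterior, respectively truncated polynomial of height exactly $p$, and no larger. This does not follow from rewriting $\circ$-monomials into indecomposables as you suggest --- the Frobenius is already zero on the divided-power coalgebras $H_*K_1$ and $H_*\mathbb{C}P^\infty$, so they impose nothing on $*$-powers of circle products --- it comes from the group-ring relation $[1]^{*p}=[0]$ in $H_*K_0=\mathbb{F}_p[\mathbb{F}_p]$ pushed through the distributive law \eqref{eq:distlaw} (cf.\ Lemma \ref{lem:6.2}). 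Second, at $p=2$ the quotient you form imposes only $e_1\circ e_1=\beta_{(0)}$, but injectivity also requires $\alpha_{(i-1)}\circ\alpha_{(i-1)}=\beta_{(i)}$ for all $i\geq 1$ (the relations recorded after Theorem \ref{Thm:RWpodd}): otherwise $F$ contains both $\beta_{(1)}$ and $\alpha_{(0)}\circ\alpha_{(0)}$ as independent classes in degree $4$ of the $K_2$-component, while $H_4K_2$ is one-dimensional by Theorem \ref{Thm:RWp2}, so $\Phi$ would have a kernel. The Verschiebung compatibility $V(x\circ y)=Vx\circ Vy$ only yields these relations in the descending direction (from index $i$ to $i-1$), so they must either be derived from a further divided-power identity for $\circ$ in a Hopf ring or imposed outright; until one of these is done the rank count does not close.
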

\noindent It is natural to ask if a similar statement be obtained in the $C_2$-equivariant case, and in that case, what specifically, is the global structure of the Hopf rings that do arise. One may also ask how the Hopf rings here relate to Hill and Hopkins' work extending Ravenel and Wilson's construction of a universal Hopf ring over $MU^*$ to $C_2$-equivariant homotopy theory \cite{HillHopkins2018}.

\subsection{Stabilizing to the $C_2$-dual Steenrod algebra}
Besides understanding a global version of the unstable story, it also remains to fully understand how the the unstable answer for $H_\star K_V$ stabilizes to give the $C_2$-equivariant dual Steenrod algebra,
$$ \mathcal{A}_\star^{C_2} = H \umF_2 [\tau_0, \tau_1, \cdots, \xi_1, \xi_2, \cdots] / (\tau_i^2 = (u + a \tau_0)\xi_{i + 1} + a \tau_{i + 1}).$$
By Hu-Kriz's construction of the $C_2$-equivariant dual Steenrod algebra \cite{HuKriz2001}, we should homology suspend $\bar{\beta}_{(i)}$ to define
$$\xi_i \in H_{(2^i - 1) \rho} H$$
and $\bar{\alpha}_{(i)}$ to define
$$ \tau_i \in H_{2^i \rho - \sigma} H. $$
However, it is not at all clear what an arbitrary element in $H_\star K_V$  should stabilize to in the the $C_2$-equivariant dual Steenrod algebra. Additionally, there is the interesting problem of understanding how the stable relation $\tau_i^2 = (u + a \tau_0)\xi_{i + 1} + a \tau_{i + 1} $ arises unstably. We look forward to studying these questions in forthcoming work.  

\bibliographystyle{amsalpha}

\bibliography{RWhopfHF2em}

\end{document}